\newtheorem*{rep@theorem}{\rep@title}
\newcommand{\newreptheorem}[2]{%
\newenvironment{rep#1}[1]{%
 \def\rep@title{#2 \ref{##1}}%
 \begin{rep@theorem}}%
 {\end{rep@theorem}}}
\theoremstyle{plain}
\newtheorem{theorem}{Theorem}[section]
\newtheorem{lemma}[theorem]{Lemma}
\newtheorem{proposition}[theorem]{Proposition}
\newtheorem{corollary}[theorem]{Corollary}
\newtheorem{scenario}{Scenario}
\theoremstyle{definition}
\newtheorem{example}{Example}[section]
\theoremstyle{remark}
\newcommand{\expect}[1]{{\mathbb E}\{{ #1 }\}}
\newcommand{\pr}[1]{{\mathbb P}\{{ #1 }\}}
\newcommand{\e}{{\rm e}}
\newcommand{\eqd}{{\,{\buildrel d \over =}\,}}
\newcommand{\stl}{\geq_{{\rm st}}}
\newcommand{\stsl}{>_{{\rm st}}}
\newcommand{\cond}{\xrightarrow{\;d\;}}
\newcommand{\stat}[2]{\pi(#1,#2)}
\newcommand{\TP}{L}
\newcommand{\AP}{\phi}
\newcommand{\DP}{\psi}
\begin{document}

\title{Queues with Random Back-Offs}
\author{N. Bouman\thanks{Eindhoven University of Technology, P.O. Box 513, 5600 MB Eindhoven, The Netherlands.} \and S.C. Borst\footnotemark[1] \thanks{Alcatel-Lucent Bell Labs, P.O. Box 636, Murray Hill, NJ 07974-0636, USA.} \and O.J. Boxma\footnotemark[1] \and J.S.H. van Leeuwaarden\footnotemark[1]}

\date{}

\maketitle

\begin{abstract}
We consider a broad class of queueing models with random
state-dependent vacation periods, which arise in the analysis of
queue-based back-off algorithms in wireless random-access networks.
In contrast to conventional models, the vacation periods may be
initiated after each service completion, and can be randomly terminated
with certain probabilities that depend on the queue length.
We examine the scaled queue length and delay in a heavy-traffic regime,
and demonstrate a sharp trichotomy, depending on how the activation
rate and vacation probability behave as function of the queue length.
In particular, the effect of the vacation periods may either
(i) completely vanish in heavy-traffic conditions, (ii) contribute an additional term to the queue lengths and delays of similar magnitude, or even (iii) give
rise to an order-of-magnitude increase.
The heavy-traffic asymptotics are obtained by combining stochastic
lower and upper bounds with exact results for some specific cases.
The heavy-traffic trichotomy provides valuable insight in the impact of the back-off
algorithms on the delay performance in wireless random-access networks.

\end{abstract}

\section{Introduction}
\label{intro}
We consider a broad class of queueing models with random
state-dependent vacation periods.
In contrast to conventional vacation models (see for instance~\cite{Tak91} for a comprehensive overview), the server may take
a vacation after each service completion and return from a vacation
with certain probabilities that depend on the queue length.
Specifically, when there are $i$~customers left behind in the system
after a service completion, the server either takes a vacation with
probability $\psi(i)$, with $\psi(0) \equiv 1$, or starts the
service of the next customer otherwise.
Likewise, the server returns from a vacation and starts the service of
the next customer at the first event of a non-homogeneous Poisson
process of rate $f(i)$, with $f(0) \equiv 0$, when there are
$i$~customers in the system.

In view of the vacation discipline, we analyze the queue length process
at departure epochs, unlike most papers on vacation models which
consider the queue length process embedded at instants when vacations
begin or end.
A notable exception is \cite{HM88}, which studies an M/G/1 queue
with a similar state-dependent vacation discipline, and establishes
a stochastic decomposition property under certain assumptions. We show that this decomposition property in fact holds in far
greater generality and corresponds to the Fuhrmann-Cooper decomposition.
In addition, we obtain the exact stationary distributions of the queue
length and delay for M/G/1 queues in three scenarios:
(i) the probability $\psi(\cdot)$ decays geometrically as a function of
the queue length, and the vacation is independent of the queue length;
(ii) the probability $\psi(\cdot)$ is inversely proportional to the queue
length, and the vacation is independent of the queue length;
(iii) $\psi(\cdot) \equiv 1$ and the activation rate $f(\cdot)$ is
proportional to the queue length.

We further derive lower and upper bounds for the mean queue length
and mean delay in two cases:
the activation rate $f(\cdot)$ is fixed and the vacation probability
$\psi(\cdot)$ is a convex decreasing function;
the activation rate $f(\cdot)$ is a concave or convex increasing function
and $\psi(\cdot) \equiv 1$.
Various stochastic bounds and comparison results are established as well.

We leverage the various bounds and stochastic comparison results to
obtain the limiting distribution of the scaled queue length and delay
in heavy-traffic conditions.
The heavy-traffic asymptotics exhibit a sharp trichotomy.
The first heavy-traffic regime emerges in scenarios~(ii) and~(iii) described above. In this regime the scaled queue length
and delay converge to random variables with a gamma distribution.
The commonality between these two scenarios lies in the fact that the
ratio $f(i) / \psi(i)$ of the activation rate and the vacation
probability is linear in the queue length.
Loosely speaking, this means that the amount of vacation time is
inversely proportional to the queue length.
This proportionality property also holds for polling systems and in particular
vacation queues with so-called branching type service disciplines,
where the number of customers served in between two vacations
(switch-over periods) is proportional to the queue length at the start
of the service period.
Interestingly, the scaled queue length and delay for these types of
service disciplines have been proven to converge to random variables
with a gamma distribution in heavy-traffic conditions as well.
The significance of the ratio $f(i) / \psi(i)$ may also be recognized
when the activation rate and vacation probability are in fact constant,
i.e., independent of the queue length.
In that case, the server is active a fixed fraction of the time which only
depends on the activation rate and vacation probability through their ratio.

In the second heavy-traffic regime, which emerges in scenario~(i)
described above, the scaled queue length and delay both converge to
an exponentially distributed random variable with the same mean as in
the corresponding ordinary M/G/1 queue without any vacations.
In other words, the impact of the vacations completely vanishes in
heavy-traffic conditions.
Note that in this scenario the vacation probability falls off
{\em faster} than the inverse of the queue length.

The third heavy-traffic regime manifests itself when the vacation
probability decays {\em slower} than the inverse of the queue length, e.g.,
like the inverse of the queue length raised to a power less than one.
In that case, the queue length and delay, scaled by their respective means,
converge to one in distribution, while the mean values increase
an order-of-magnitude faster with the traffic intensity than in the
first two regimes.

While the above results are of independent interest from a queueing
perspective, they are also particularly relevant for the analysis
of distributed medium access control algorithms in wireless networks,
which in fact was the main motivation for the present work.
Emerging wireless networks typically lack any centralized control
entity for regulating transmissions, and instead vitally rely on
the individual nodes to operate autonomously and fairly share the
medium in a distributed fashion.
A particularly popular mechanism for distributed medium access control
is provided by the CSMA (Carrier-Sense Multiple-Access) protocol.
In the CSMA protocol each node attempts to access the medium after
a certain back-off time, but nodes that sense activity of interfering
nodes freeze their back-off timer until the medium is sensed idle.
From a queueing perspective, the back-off times may be interpreted
as vacation periods during which no transmissions take place,
even when packets may be queued up.

Despite their asynchronous and distributed nature, CSMA-like algorithms
have been shown to offer the capability of achieving the full capacity
region and thus match the optimal throughput performance of centralized
scheduling algorithms operating in slotted time \cite{JW08,JW10,LYPCP08}.
Based on this observation, various clever algorithms have been
developed for finding the back-off rates that yield specific target
throughput values or that optimize concave throughput utility functions
in scenarios with saturated buffers \cite{JW08,JW10,ME08}.

In the same spirit, several powerful algorithms have been devised for
adapting the back-off probabilities based on the {\em queue lengths} in
non-saturated scenarios \cite{JSSW10,RSS09,SST11}.
Roughly speaking, the latter algorithms provide maximum-stability
guarantees under the condition that the back-off probabilities of the
various nodes are reciprocal to the logarithms of the queue lengths.
Unfortunately, however, such back-off probabilities can induce
excessive queue lengths and delays, which has triggered a strong
interest in developing approaches for improving the delay performance
\cite{BBL11a,BBL11b,BBLP11,GS10,JLNSW11,LM10,NTS10,SS10}.
In particular, it has been shown that more aggressive schemes,
where the back-off probabilities decay faster to zero as function of
the queue lengths, can reduce the delays.
The heavy-traffic results described above offer a useful indication
of the impact of the choice of the back-off probabilities on the
delay performance.
It is worth observing that the vacation model does not account for
the effects of the network topology, and highly aggressive schemes
which are optimal in a single-node scenario, may in fact fail to
achieve maximum stability in certain types of topologies~\cite{GBW12}.
However, the single-node results provide fundamental insight how
the role of the back-off probabilities may inherently inflate
queue lengths and delays.

The remainder of the paper is organized as follows.
In Section~\ref{modeld} we present a detailed model description.
We provide an exact analysis of the model in Section~\ref{exactana},
which yields formulas for the stationary queue length distribution
in some specific cases.
In Section~\ref{secbounds} we derive lower and upper bounds for the
mean queue length and we establish a stochastic relation between
systems with different functions $\DP(\cdot)$ and $f(\cdot)$.
We study heavy-traffic behavior in Section~\ref{HTresults}
and identify three qualitatively different regimes.
In Section~\ref{concl} we summarize our findings and discuss the implications for wireless networks.
Finally, Appendix~\ref{appendix} contains some proofs that have
been relegated from the main text.

\section{Model description}
\label{modeld}
We consider an M/G/1 queue with vacations. That is, we consider a queueing system with one server that can be active or inactive. Customers arrive according to a Poisson process with rate~$\lambda$, independent of the state of the system.
Let $\sigma(t)$ indicate whether the server is active at time $t$ ($\sigma(t)=1$) or not $(\sigma(t)=0$) and denote by $\TP(t)$ the number of customers in the system at time~$t$.
When inactive, no customer is served and we say that the server is on vacation. The server becomes active after some time that may depend on the number of waiting customers at the beginning of the vacation period and the number of customers that arrive during the vacation period, but it may not depend on future arrivals.
Denote by $\AP(i,m)$ the probability that exactly $m$ customers arrive during a vacation period that begins with $i$ customers in the system, where $\AP:[0,\infty)\times [0,\infty) \mapsto [0,1]$. Further we assume $\AP(0,0)=0$, i.e.~the server does not activate if no customers are present in the system. Let the random variable $X_i$ denote the number of arrivals during a vacation period that begins with $i$ customers in the system. We will assume that $\AP(\cdot)$ is such that $\expect{X_i^2}<\infty$.
When active, customers are served and the service times, generically denoted by~$B$, are generally distributed with distribution function $F_B(\cdot)$ and Laplace-Stieltjes transform $\tilde{B}(\cdot)$. We assume that $\expect{B^2}<\infty$ and that the service times are independent of the arrival and vacation times. Right after a service completion that leaves $i$ customers behind, the server becomes inactive with probability $\DP(i)$, where $\DP:[0,\infty)\mapsto [0,1]$. Further we assume $\DP(0)=1$, i.e.~the server always becomes inactive if no customers are left in the system.

Let $\rho=\lambda\expect{B}$ denote the traffic intensity of the system. Throughout this paper, we denote the generating function of a non-negative and discrete random variable~$W$ by $G_W(r)=\expect{r^W}$, with $r\in [0,1]$. Note that
\[
G_{X_i}(r)= \sum_{m=0}^{\infty}\AP(i,m)r^m.
\]

Let $W_1 \eqd W_2$ denote that two random variables $W_1$ and $W_2$ are equal in distribution, so that $\pr{W_1\leq w} = \pr{W_2\leq w}$ for all $w$. Further, let $W_1 \stl W_2$ denote that $W_1$ is stochastically larger than $W_2$, so that $\pr{W_1\leq w} \leq \pr{W_2\leq w}$ for all $w$. Finally, let $W_1 \stsl W_2$ denote that $W_1$ is stochastically strictly larger than $W_2$, so that $W_1 \stl W_2$ and, additionally, $\pr{W_1\leq w} < \pr{W_2\leq w}$ for some $w$.

\section{Exact analysis}
\label{exactana}
Denote by $Z_n$ the number of customers just after the $n$-th service completion and by $A_n$ the number of arrivals during the $n$-th service. Then $(Z_n)_{n\in \mathbb{N}_0}$ constitutes a Markov chain with transition probabilities
\[
\pr{Z_{n+1}=j|Z_n=i} = (1-\DP(i))\pr{A_n=j-i+1}+\DP(i)\pr{X_i+A_n=j-i+1},
\]
for $j\geq i-1$ and
\[
\pr{Z_{n+1}=j|Z_n=i}=0,
\]
for $j<i-1$. Because $X_i$ and $A_n$ are assumed to be independent,
\begin{align}
\expect{r^{Z_{n+1}}|Z_n=i}&=(1-\DP(i))r^{i-1}G_{A_n}(r)+\DP(i)r^{i-1}G_{X_i}(r)G_{A_n}(r) \nonumber \\
&=r^{i-1}G_{A_n}(r)(1+\DP(i)(G_{X_i}(r)-1)), \label{eq1}
\end{align}
where $G_{A_n}(r)=\tilde{B}(\lambda(1-r))$ for all $n$. Using this relation we can find a sufficient condition for stability of the system.
\begin{lemma}
\label{stable}
The Markov chain $(Z_n)_{n \in \mathbb{N}_0}$ is positive recurrent if
\begin{equation}
\label{stabcond}
\limsup_{i\rightarrow\infty} \DP(i)\expect{X_i}<1-\rho.
\end{equation}
\end{lemma}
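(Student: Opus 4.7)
The plan is to apply a standard Foster--Lyapunov drift argument with the identity Lyapunov function $V(i)=i$ (i.e., Pakes' lemma). All I need is to show that the one-step expected drift of $(Z_n)$ is strictly negative on the complement of a finite set.

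First, I would compute the drift directly from the generating-function identity~\eqref{eq1}. Differentiating both sides of
\[
\expect{r^{Z_{n+1}}\mid Z_n=i}=r^{i-1}G_{A_n}(r)\bigl(1+\DP(i)(G_{X_i}(r)-1)\bigr)
\]
at $r=1$, using $G_{A_n}(1)=G_{X_i}(1)=1$, $G'_{A_n}(1)=\lambda\expect{B}=\rho$, and $G'_{X_i}(1)=\expect{X_i}$, yields
\[
\expect{Z_{n+1}\mid Z_n=i}=i-1+\rho+\DP(i)\expect{X_i},
\]
so the drift is $d(i):=\expect{Z_{n+1}-Z_n\mid Z_n=i}=-(1-\rho)+\DP(i)\expect{X_i}$. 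Note that $\expect{X_i}$ is finite for every $i$ by the second moment assumption $\expect{X_i^2}<\infty$, so $d(i)$ is well defined.

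Second, under hypothesis~\eqref{stabcond} I would pick $\delta>0$ and $N\in\mathbb{N}$ such that $\DP(i)\expect{X_i}\leq 1-\rho-\delta$ for all $i\geq N$. Then $d(i)\leq -\delta<0$ for $i\geq N$, i.e.\ the drift is uniformly negative off the finite set $C=\{0,1,\ldots,N-1\}$. On $C$ the drift is at worst $-(1-\rho)+\max_{i<N}\DP(i)\expect{X_i}<\infty$, and the mean increments are therefore also bounded on $C$. Positive recurrence of $(Z_n)$ then follows from Foster's theorem (equivalently Pakes' lemma), provided the chain is irreducible on the class containing $0$; this is built into the model, since from any state one reaches $0$ with positive probability through a finite sequence of service completions with no intervening arrivals.

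The argument has no real obstacle; the only thing to be mildly careful about is that the PGF differentiation at $r=1$ is justified (finiteness of $\expect{X_i}$ and of $\rho$), and that the exceptional set $C$ in Foster's theorem is genuinely finite, which is exactly what the $\limsup$ assumption delivers. It is worth remarking that the hypothesis $\expect{X_i^2}<\infty$ assumed in the model is stronger than what the stability proof actually uses; only first moments enter the drift calculation here.
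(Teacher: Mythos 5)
Your proposal is correct and follows essentially the same route as the paper: the paper likewise reads off the conditional mean $\expect{Z_{n+1}\mid Z_n=i}=i-1+\rho+\DP(i)\expect{X_i}$ from~\eqref{eq1} and invokes Pakes' Lemma, using only $\expect{X_i}<\infty$. Your additional remarks on irreducibility and on the second-moment assumption being stronger than needed are sound but do not change the argument.
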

\begin{proof}
This result is proved in~\cite{HM88}, using the results in~\cite{Crabill68}. For a short proof note that from~\eqref{eq1} we find $\expect{Z_{n+1}|Z_n=i}=i-1+\rho+\DP(i)\expect{X_i}$. The result now follows immediately from Pakes' Lemma~\cite{Pak69} as $\expect{X_i}<\infty$ for all $i\geq 0$ by assumption.
\end{proof}
In words, Lemma~\ref{stable} states that for stability it is sufficient that the system is busy serving customers more than a fraction $\rho$ of the time if the number of customers in the system is large.

We henceforth assume the system is stable, i.e.~$\DP(\cdot)$ and $\AP(\cdot)$ are such that the condition in~\eqref{stabcond} is satisfied. Let the random variable~$Z$ have the stationary distribution of the embedded Markov chain $(Z_n)_{n\in \mathbb{N}_0}$, i.e.
\[
\pr{Z=j}=\lim_{n\rightarrow\infty}\pr{Z_n=j|Z_0=k},\quad k\geq 0.
\]
By the PASTA property and a level crossings argument we know that $\{\pr{Z=j},j\geq 0\}$ is also the stationary distribution of the number of customers in the system~$\TP$, with
\[
\pr{\TP =j}=\lim_{t\rightarrow\infty}\pr{\TP(t)=j|\TP(0)=k},
\]
for any $k\geq 0$.
Hence, using~\eqref{eq1}, we obtain the relation
\[
G_{\TP}(r)=\frac{1}{r} \tilde{B}(\lambda(1-r))\Big(G_{\TP}(r) + \sum_{i=0}^{\infty}\DP(i)r^i\pr{\TP=i}(G_{X_i}(r)-1)\Big),
\]
which corresponds to~\cite[Eq.~(2)]{HM88}. Equivalently,
\begin{equation}
\label{genG}
G_{\TP}(r)=\frac{\tilde{B}(\lambda(1-r))\sum_{i=0}^{\infty}\DP(i)r^i\pr{\TP=i}(1-G_{X_i}(r))}{\tilde{B}(\lambda(1-r))-r}.
\end{equation}

\begin{example}
\label{PKexmp}
One activation scheme would be to never de-activate when the system is nonempty right after a service completion, i.e.,~$\DP(i)=0$ for $i\geq 1$. Similarly we could say that the server always activates immediately if there are waiting customers at the beginning of the vacation period, i.e.,~$\AP(i,0)=1$ for $i\geq 1$, and $\AP(i,m)=0$ otherwise, so that $G_{X_i}(r)=1$ for $i\geq 1$. For this activation scheme~\eqref{genG} simplifies to
\[
G_{\TP}(r)=\frac{\tilde{B}(\lambda(1-r))\pr{\TP=0}(1-G_{X_0}(r))}{\tilde{B}(\lambda(1-r))-r}.
\]
Using that $G_{\TP}(1)=1$ and applying l'H\^opital's rule yields
\[
\pr{\TP=0}=\frac{1-\rho}{\expect{X_0}},
\]
and hence
\begin{equation}
\label{PKform}
G_{\TP}(r)=\frac{(1-\rho)\tilde{B}(\lambda(1-r))(1-G_{X_0}(r))}{\expect{X_0}(\tilde{B}(\lambda(1-r))-r)}.
\end{equation}
Note that if the server waits for exactly one customer to arrive if there are no waiting customers at the beginning of the vacation period, i.e.~$X_0\equiv 1$, then~\eqref{PKform} becomes the classical Pollaczek-Khinchin formula for the standard M/G/1 queue without vacations,
\begin{equation}
\label{PKform2}
G_{\TP}(r)=G_{{\TP}_{{\rm M/G/1}}}(r)=\frac{(1-\rho)\tilde{B}(\lambda(1-r))(1-r)}{\tilde{B}(\lambda(1-r))-r}.
\end{equation}
\end{example}
The Fuhrmann-Cooper decomposition~\cite{FC85} relates $G_{\TP}(r)$ to the Pollaczek-Khinchin formula through
\begin{equation}
\label{fuco1}
G_{\TP}(r) = G_{{\TP}_{{\rm M/G/1}}}(r) G_{{\TP}_{I}}(r),
\end{equation}
where ${\TP}_I$ denotes the number of customers in the system at an arbitrary epoch during a non-serving (vacation) period. This decomposition property can be derived from~\eqref{genG}. For this denote by ${\TP}_{\rm begin}$ and ${\TP}_{\rm end}$ the number of customers in the system at, respectively, the beginning and the end of a vacation period, and let $\gamma$ be the probability that the server becomes inactive after a departure,
\[
\gamma = \sum_{i=0}^{\infty} \DP(i) \pr{\TP=i}.
\]
Because the system is stable the expected number of arrivals between two service completions is equal to the expected number of service completions, which equals one. Therefore,
\[
\rho + \gamma (\expect{{\TP}_{\rm end}}-\expect{{\TP}_{\rm begin}}) = 1,
\]
and the expected number of arrivals during a vacation period is therefore given by
\[
\expect{{\TP}_{\rm end}}-\expect{{\TP}_{\rm begin}}= \frac{1-\rho}{\gamma}.
\]
Further note that
\[
\pr{{\TP}_{\rm begin}=i} = \frac{1}{\gamma} \pr{L=i}\DP(i).
\]
From~\eqref{genG} we now find
\begin{align*}
G_{\TP}(r)&= \frac{(1-\rho)\tilde{B}(\lambda(1-r))(1-r)}{\tilde{B}(\lambda(1-r))-r} \frac{\sum_{i=0}^{\infty}\DP(i)r^i\pr{\TP=i}(1-G_{X_i}(r))}{(1-\rho)(1-r)}\\
&= G_{{\TP}_{{\rm M/G/1}}}(r) \frac{\sum_{i=0}^{\infty}\frac{1}{\gamma}\DP(i)r^i\pr{\TP=i}(1-G_{X_i}(r))}{\frac{1}{\gamma}(1-\rho)(1-r)}\\
&= G_{{\TP}_{{\rm M/G/1}}}(r) \frac{\sum_{i=0}^{\infty}r^i\pr{{\TP}_{\rm begin}=i}(1-G_{X_i}(r))}{(1-r)(\expect{{\TP}_{\rm end}}-\expect{{\TP}_{\rm begin}})}\\
&= G_{{\TP}_{{\rm M/G/1}}}(r) \frac{G_{{\TP}_{\rm begin}}(r) - G_{{\TP}_{\rm end}}(r)}{(1-r)(\expect{{\TP}_{\rm end}}-\expect{{\TP}_{\rm begin}})},
\end{align*}
yielding~\eqref{fuco1}, see~\cite{Borst95}. Thus to find $G_{\TP}(r)$ we can either solve equation~\eqref{genG} or find $G_{{\TP}_{I}}(r)$ and then use the Fuhrmann-Cooper decomposition.

In the remainder of this section we will analyze the system for several choices of $\AP(\cdot)$ and $\DP(\cdot)$.

\subsection{Equal vacation distributions}
\label{equalvac}
In this subsection we assume that $X_i \eqd X$ for $i\geq 1$, with $X$ some generic random variable. We further assume that $X\stsl 0$, so that with nonzero probability at least one customer arrives during any vacation. The case $X \eqd 0$ is already solved in Example~\ref{PKexmp}. Next, if a vacation starts with no customers in the system, we assume that first $X$ customers arrive. After this, if the system is still empty, the vacation is extended in an arbitrary way until at least one customer has arrived. We thus have $X_0 \eqd X$ if $X\stl 1$, i.e.~if $\pr{X=0}=0$, and $X_0 \stsl X$ otherwise, i.e.~if $\pr{X=0}>0$.

To summarize, in this subsection we study the following scenario.
\begin{scenario}
\label{scen1}
$X_i\eqd X \stsl 0$ for all $i\geq 1$ and either $X_0 \eqd X$ and $\pr{X=0}=0$ or $X_0 \stsl X$ and $\pr{X=0}>0$.
\end{scenario}
Note that in this scenario we have $G_{X_i}(r)=G_{X}(r)$ for all $i\geq 0$ and all $r \in [0,1]$ if $G_{X}(0)=0$ and $G_{X_0}(r)<G_{X_i}(r)=G_X(r)$ for all $i\geq 1$ and all $r \in [0,1)$ if $G_{X}(0)>0$. So from~\eqref{genG} and $\DP(0)=1$ it follows that
\begin{equation}
\label{geng}
G_{\TP}(r)=\frac{\tilde{B}\Big(\lambda(1-r))(\pr{\TP=0}(G_X(r)-G_{X_0}(r))+(1-G_X(r))\sum_{i=0}^{\infty}\DP(i)r^i\pr{\TP=i}\Big)}{\tilde{B}(\lambda(1-r))-r}.
\end{equation}

Equation~\eqref{geng} seems hard to solve in general, but we are able to find solutions for several specific choices for $\DP(\cdot)$. Before analyzing~\eqref{geng} in more detail we now first give a prototypical example of a system that belongs to Scenario~\ref{scen1}. This example describes a back-off mechanism used in wireless networks.
\begin{example}
\label{vactime}
Consider a server that always waits for a certain time $V$, independent of the arrivals during this time. After this time the server activates if there are customers present in the system, and otherwise the server again waits for a time $V$ (independent of the previous time) and repeats this procedure until there are customers present in the system. Assume $V$ is generally distributed with distribution function $F_V(\cdot)$ and Laplace-Stieltjes transform $\tilde{V}(\cdot)$. Denoting by $\alpha_m$ the probability that exactly $m$ customers arrive during a time $V$,
\[
\alpha_m = \int_{0}^{\infty} \frac{(\lambda t)^m}{m!}\e^{-\lambda t} dF_V(t),
\]
we have $\AP(i,m)=\alpha_m$, for $i\geq 1$, and $\AP(0,m)=\alpha_m/(1-\alpha_0)$, for $m\geq 1$.
Further, we get
\[
G_{X_0}(r) = \sum_{m=1}^{\infty} \AP(0,m) r^m = \frac{1}{1-\alpha_0} \int_{t=0}^{\infty} \sum_{m=1}^{\infty} \frac{(\lambda t r)^m}{m!}\e^{-\lambda t} dF_V(t) = \frac{\tilde{V}(\lambda(1-r))-\tilde{V}(\lambda)}{1-\tilde{V}(\lambda)},
\]
where the interchange of summation and integration is justified by Beppo Levi's theorem, see e.g.~\cite{CK04}. Similarly, we get $G_{X_i}(r)=\tilde{V}(\lambda(1-r))$ for $i\geq 1$.

Note that if $V$ is exponentially distributed with mean $1/\nu$ we find $X_i \eqd X$, $i \geq 1$, where $X$ is a geometric random variable, with
\begin{equation}
\label{Pexp}
G_{X}(r)=\frac{\nu}{\lambda(1-r)+\nu}.
\end{equation}
Further, $G_{X_0}(r)=rG_{X_1}(r)$ as $X_0 \eqd X_1 + 1$ in this case.
\end{example}

We will now use~\eqref{geng} to find $G_{\TP}(\cdot)$ if $\DP(i)=a^i$ or $\DP(i)=1/(i+1)$, two functions that we will need in the heavy-traffic analysis of the system, see Section~\ref{HTresults}. For this purpose first introduce
\begin{equation}
\label{defK}
K(r)=\frac{\tilde{B}(\lambda(1-r))(G_X(r)-G_{X_0}(r))}{\tilde{B}(\lambda(1-r))-r},
\end{equation}
with $K(r)\equiv 0$ if $X_0 \eqd X$. Define
\begin{equation}
\label{defY}
Y(r)=\frac{\tilde{B}(\lambda(1-r))(1-G_{X}(r))}{\tilde{B}(\lambda(1-r))-r}
\end{equation}
and note that an alternative expression for $Y(\cdot)$ is given by
\begin{equation}
\label{defYalt}
Y(r)=G_{\TP_{{\rm M/G/1}}}(r) G_{X^{\rm {res}}}(r) \frac{\expect{X}}{1-\rho},
\end{equation}
with $G_{\TP_{{\rm M/G/1}}}(r)$ the generating function of the number of customers in a standard M/G/1 queue without vacations~\eqref{PKform2}, and $G_{X^{\rm {res}}}(r)$ the generating function of the number of arrivals in a residual vacation period,
\[
\expect{r^{X^{\rm {res}}}} = \frac{1-G_{X}(r)}{(1-r)\expect{X}}.
\]
Similarly, we can write
\begin{align}
K(r)&=G_{\TP_{{\rm M/G/1}}}(r) \frac{G_X(r)-G_{X_0}(r)}{(1-\rho)(1-r)} \nonumber \\
&= G_{\TP_{{\rm M/G/1}}}(r) \Big( \expect{X_0} G_{X_0^{\rm {res}}}(r) - \expect{X} G_{X^{\rm {res}}}(r) \Big) \frac{1}{1-\rho}. \label{defKalt}
\end{align}
Further, by l'H\^opital's rule,
\[
Y(1)=\lim_{r\uparrow 1} Y(r) = \frac{\expect{X}}{1-\rho}
\]
and
\[
K(1)=\lim_{r\uparrow 1} K(r) =\frac{\expect{X_0}-\expect{X}}{1-\rho}.
\]
Note that $Y(1)>0$ as $\expect{X}>0$ and $\rho<1$ for stability. Also note that $K(1)>0$ if $X_0 \stsl X$.

Finally note that the generating function $G_W(r)$ of any non-negative discrete random variable $W$ is a non-negative continuously differentiable function on $[0,1]$, as follows from the definition of a generating function. Hence $Y(r)$ and $K(r)$ are non-negative continuously differentiable functions on $[0,1]$.

\begin{theorem}
\label{eqai}
For {\rm Scenario~\ref{scen1}} and $\DP(i)=a^i$ with $0\leq a <1$, $i\geq 0$,

${\rm (i)}$ If $X_0 \eqd X$,
\begin{equation}
\label{eqaieq1}
G_{\TP}(r)=\frac{\prod_{i=0}^{\infty}Y(a^ir)}{\prod_{i=0}^{\infty}Y(a^i)}.
\end{equation}

${\rm (ii)}$ If $X_0 >_{{\rm st}} X$,
\begin{equation}
\label{eqaieq2}
G_{\TP}(r)=\frac{\sum_{j=0}^{\infty}K(a^jr)\prod_{i=0}^{j-1}Y(a^ir)}{\sum_{j=0}^{\infty}K(a^j)\prod_{i=0}^{j-1}Y(a^i)},
\end{equation}
with $\prod_{i=0}^{-1} g(i)= 1$ for any function $g(\cdot)$.
\end{theorem}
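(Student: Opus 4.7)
The plan is to treat both cases via a single reduction that exploits the special form $\DP(i)=a^i$. In the master equation \eqref{geng} the only term involving the unknown distribution of $\TP$ on the right-hand side is $\sum_{i\geq 0}\DP(i)r^i\pr{\TP=i}$, and for $\DP(i)=a^i$ this sum collapses exactly to $G_{\TP}(ar)$. Substituting and using the abbreviations $K(\cdot)$ and $Y(\cdot)$ from \eqref{defK} and \eqref{defY}, equation \eqref{geng} becomes the single functional equation
\[
G_{\TP}(r) \;=\; \pr{\TP=0}\,K(r) \;+\; Y(r)\,G_{\TP}(ar).
\]

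I would then iterate this identity, replacing $r$ by $ar$ at each step; after $n$ substitutions one obtains
\[
G_{\TP}(r) \;=\; \pr{\TP=0}\sum_{j=0}^{n-1} K(a^j r)\prod_{i=0}^{j-1}Y(a^i r) \;+\; G_{\TP}(a^n r)\prod_{i=0}^{n-1}Y(a^i r).
\]
For case (i), $X_0\eqd X$ forces $K\equiv 0$ and only the product term survives. Here $\pr{X=0}=0$ gives $Y(0)=1$, which together with the continuous differentiability of $Y$ (noted just before the theorem) yields $Y(a^i r)=1+O(a^i)$ uniformly in $r\in[0,1]$, so the infinite product $\prod_{i\geq 0}Y(a^i r)$ converges to a finite positive limit. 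Letting $n\to\infty$ and using continuity of $G_{\TP}$ at $0$ gives $G_{\TP}(a^n r)\to\pr{\TP=0}$, and \eqref{eqaieq1} follows after fixing $\pr{\TP=0}$ from the normalization $G_{\TP}(1)=1$. For case (ii) the situation is complementary: $\pr{X=0}>0$ yields $Y(0)=1-\pr{X=0}<1$, so $\prod_{i=0}^{n-1}Y(a^i r)$ decays geometrically to zero, the second term in the iterated identity vanishes, the series in the first term is absolutely convergent, and again $G_{\TP}(1)=1$ fixes the proportionality constant, giving \eqref{eqaieq2}.

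The main obstacle is the convergence analysis of the infinite product and the infinite series, which rests on the two complementary behaviours of $Y$ at the origin: in case (i) one uses that $Y(0)=1$ to obtain a convergent product, while in case (ii) one uses $Y(0)<1$ to obtain geometric decay of the same product. Beyond those two checks, everything amounts to bookkeeping in the iteration of the functional equation, and the branching in Scenario~\ref{scen1} (whether $\pr{X=0}$ is zero or positive) is precisely what drives the split between formulas \eqref{eqaieq1} and \eqref{eqaieq2}.
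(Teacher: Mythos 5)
Your proposal is correct and follows essentially the same route as the paper: reduce \eqref{geng} to the functional equation $G_{\TP}(r)=\pr{\TP=0}K(r)+Y(r)G_{\TP}(ar)$, iterate, use $Y(0)=1$ versus $Y(0)<1$ to decide whether the infinite product survives or vanishes, and normalize at $r=1$. The only (harmless) difference is in the convergence justification, where you use the direct bound $|Y(a^ir)-1|=O(a^i)$ from continuous differentiability instead of the ratio-test argument the paper relegates to Lemma~\ref{convproofs}.
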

\begin{proof}
From Lemma~\ref{stable} we obtain that the system is stable if $\rho<1$, as $0\leq a < 1$. We will now first prove the result for case~${\rm (i)}$, for which~\eqref{geng} simplifies to
\begin{equation}
\label{ag}
G_{\TP}(r)=\frac{\tilde{B}(\lambda(1-r))(1-G_{X}(r))G_{\TP}(ar)}{\tilde{B}(\lambda(1-r))-r}=Y(r)G_{\TP}(ar).
\end{equation}
Upon iteration this gives, using $G_{\TP}(0)=\pr{\TP=0}$,
\begin{equation}
\label{agfin}
G_{\TP}(r)=\pr{\TP=0}\prod_{i=0}^{\infty}Y(a^ir).
\end{equation}
Finally, using $G_{\TP}(1)=1$, we obtain
\begin{equation}
\label{prob01}
\pr{\TP=0}=\frac{1}{\prod_{i=0}^{\infty}Y(a^i)}.
\end{equation}
Combining~\eqref{agfin} and~\eqref{prob01} yields assertion~\eqref{eqaieq1}. In Lemma~\ref{convproofs} we prove that $\prod_{i=0}^{\infty}Y(a^ir)$ converges for all $r\in [0,1]$, so in particular $\pr{\TP=0}>0$.

For case~${\rm (ii)}$ equation~\eqref{geng} simplifies to
\begin{eqnarray*}
G_{\TP}(r)&=&\frac{\tilde{B}(\lambda(1-r))\Big(\pr{\TP=0}(G_{X}(r)-G_{X_0}(r))+(1-G_{X}(r))G_{\TP}(ar)\Big)}{\tilde{B}(\lambda(1-r))-r}\\
&=& K(r)\pr{\TP=0} + Y(r)G_{\TP}(ar).
\end{eqnarray*}
Iterating this and using $G_{\TP}(0)=\pr{\TP=0}$ we get
\[
G_{\TP}(r)=\pr{\TP=0}\Big(\sum_{j=0}^{\infty}K(a^jr)\prod_{i=0}^{j-1}Y(a^ir) + \prod_{i=0}^{\infty} Y(a^ir) \Big).
\]
Now note that $Y(0)=1-G_X(0)$, so $Y(0)<1$ by assumption. Thus, as $Y(\cdot)$ is continuous and $0\leq a <1$,
\begin{equation}
\label{agfin2}
G_{\TP}(r)=\pr{\TP=0}\sum_{j=0}^{\infty}K(a^jr)\prod_{i=0}^{j-1}Y(a^ir).
\end{equation}

From~\eqref{agfin2} and $G_{\TP}(1)=1$ we get~\eqref{eqaieq2}. In Lemma~\ref{convproofs} we prove that $\sum_{j=0}^{\infty}K(a^jr)\prod_{i=0}^{j-1}Y(a^ir)$ converges for all $r\in [0,1]$, so in particular
\[
\pr{\TP=0}=\frac{1}{\sum_{j=0}^{\infty}K(a^j)\prod_{i=0}^{j-1}Y(a^i)} >0,
\]
which completes the proof.
\end{proof}

\begin{theorem}
\label{eq1over}
For {\rm Scenario~\ref{scen1}} and $\DP(i)=1/(i+1)$, $i\geq 0$, with $\alpha(r)=\int_{r}^1\frac{Y(x)}{x}dx$,

${\rm (i)}$ If $X_0 \eqd X$,
\begin{equation}
\label{eq1overeq1}
G_{\TP}(r)=\frac{(1-\rho)Y(r)}{r\expect{X}}\e^{-\alpha(r)}.
\end{equation}

${\rm (ii)}$ If $X_0 >_{{\rm st}} X$,
\begin{equation}
\label{eq1overeq2}
G_{\TP}(r)=\pr{\TP=0}\Big(K(r)+\frac{Y(r)}{r}\e^{-\alpha(r)}\int_0^r K(y)\e^{\alpha(y)}dy\Big),
\end{equation}
with
\begin{equation}
\label{eq1overeq2L0}
\pr{\TP=0}=\frac{1}{K(1)+Y(1)\int_0^1K(x)\e^{\alpha(x)} dx}.
\end{equation}
\end{theorem}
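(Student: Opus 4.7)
The plan is to reduce \eqref{geng} with $\DP(i)=1/(i+1)$ to a first-order linear ODE whose unknown is the primitive $H(r):=\int_0^r G_{\TP}(y)\,dy$, and then solve it explicitly by an integrating-factor argument. The key observation is that
\[
\sum_{i=0}^{\infty}\frac{r^i}{i+1}\pr{\TP=i}=\frac{1}{r}\sum_{i=0}^{\infty}\frac{r^{i+1}}{i+1}\pr{\TP=i}=\frac{H(r)}{r},
\]
so that, inserting $\DP(i)=1/(i+1)$ into \eqref{geng} and recognising the functions $K(\cdot)$ and $Y(\cdot)$ from \eqref{defK}--\eqref{defY}, I can rewrite the functional equation as
\[
G_{\TP}(r)=K(r)\pr{\TP=0}+\frac{Y(r)}{r}H(r).
\]
Since $G_{\TP}(r)=H'(r)$, this is exactly the first-order linear ODE
\[
H'(r)-\frac{Y(r)}{r}H(r)=K(r)\pr{\TP=0},
\]
with the initial condition $H(0)=0$. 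The definition of $\alpha$ gives $\alpha'(r)=-Y(r)/r$, so $e^{\alpha(r)}$ is an integrating factor and $\frac{d}{dr}\bigl[e^{\alpha(r)}H(r)\bigr]=K(r)\pr{\TP=0}\,e^{\alpha(r)}$.

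In case (i), $K\equiv 0$, so $e^{\alpha(r)}H(r)$ is constant. Evaluating at $r=1$ (where $\alpha(1)=0$) and using the normalisation $G_{\TP}(1)=H'(1)=Y(1)H(1)=1$ with $Y(1)=\expect{X}/(1-\rho)$, I obtain $H(1)=(1-\rho)/\expect{X}$, hence $H(r)=\tfrac{1-\rho}{\expect{X}}e^{-\alpha(r)}$. Differentiating (or using the ODE itself) yields $G_{\TP}(r)=\tfrac{(1-\rho)Y(r)}{r\expect{X}}e^{-\alpha(r)}$, which is \eqref{eq1overeq1}.

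In case (ii), integrate $\frac{d}{dr}[e^{\alpha(r)}H(r)]=K(r)\pr{\TP=0}e^{\alpha(r)}$ from $\epsilon$ to $r$ and let $\epsilon\downarrow 0$, which gives
\[
H(r)=\pr{\TP=0}\,e^{-\alpha(r)}\int_0^r K(y)e^{\alpha(y)}\,dy
\]
provided the boundary term $e^{\alpha(\epsilon)}H(\epsilon)$ vanishes. Substituting this back into the expression for $G_{\TP}(r)$ produces \eqref{eq1overeq2}, and the normalisation $G_{\TP}(1)=1$, together with $\alpha(1)=0$ and $Y(1)=\expect{X}/(1-\rho)$, pins down $\pr{\TP=0}$ as in \eqref{eq1overeq2L0}.

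The main technical point is the vanishing of the boundary term in case (ii): since $\pr{X=0}>0$ there, $Y(0)=1-\pr{X=0}\in(0,1)$ and the decomposition $\alpha(\epsilon)=Y(0)\ln(1/\epsilon)+\int_\epsilon^1(Y(x)-Y(0))/x\,dx$ gives $e^{\alpha(\epsilon)}\sim c\,\epsilon^{-Y(0)}$, while $H(\epsilon)\sim\pr{\TP=0}\,\epsilon$ by continuity of $G_{\TP}$ at $0$; hence $e^{\alpha(\epsilon)}H(\epsilon)=O(\epsilon^{1-Y(0)})\to 0$. The same estimate, applied to $K(y)e^{\alpha(y)}\sim K(0)c\,y^{-Y(0)}$ near $y=0$, ensures convergence of $\int_0^1 K(y)e^{\alpha(y)}\,dy$, so the formula in \eqref{eq1overeq2L0} is well defined.
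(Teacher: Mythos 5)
Your proposal is correct and follows essentially the same route as the paper: reduce \eqref{geng} to the linear ODE $H'(r)=\pr{\TP=0}K(r)+Y(r)H(r)/r$ for the primitive $H$ and solve by the integrating factor $\e^{\alpha(r)}$. The only (equivalent) variation is in case (ii), where you kill the constant of integration by showing the boundary term $\e^{\alpha(\epsilon)}H(\epsilon)=O(\epsilon^{1-Y(0)})\to 0$, whereas the paper argues that $\lim_{r\downarrow 0}\tfrac{Y(r)}{r}\e^{-\alpha(r)}=\infty$ forces $C=0$; both rest on the same fact $Y(0)=1-\pr{X=0}<1$, and your added check that $\int_0^1 K(y)\e^{\alpha(y)}\,dy$ converges is a small bonus.
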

\begin{proof}
From Lemma~\ref{stable} it follows that the system is stable if $\rho<1$ as $\lim_{i\rightarrow\infty}\DP(i)=0$.
For case~${\rm (i)}$, equation~\eqref{geng} gives
\[
G_{\TP}(r)=\frac{\tilde{B}(\lambda(1-r))(1-G_{X}(r))\sum_{i=0}^{\infty}\frac{1}{i+1}r^i\pr{\TP=i}}{\tilde{B}(\lambda(1-r))-r}.
\]
Now define
\[
H_{\TP}(r)=\sum_{i=0}^{\infty} \frac{1}{i+1} r^{i+1} \pr{\TP=i},
\]
and note that $H'_\TP(r)=G_{\TP}(r)$. Thus, $H_{\TP}(r)$ can be found by solving the differential equation
\begin{equation}
\label{difeq1over}
H'_{\TP}(r)=\frac{Y(r) H_\TP(r)}{r}.
\end{equation}
We thus get
\[
H_\TP(r)=C \cdot {\rm{exp}}\Big(-\int_{r}^1\frac{Y(x)}{x}dx\Big)
\]
and
\[
G_{\TP}(r)=C \cdot \frac{Y(r)}{r}{\rm{exp}}\Big(-\int_{r}^1\frac{Y(x)}{x}dx\Big),
\]
where $C$ is some constant. Using $G_{\TP}(1)=1$ this gives~\eqref{eq1overeq1}.

For case~${\rm (ii)}$ we can find $H_{\TP}(r)$ by solving the differential equation
\begin{equation}
\label{difeq1over2}
H'_{\TP}(r)=\pr{\TP=0}K(r)+\frac{Y(r)H_{\TP}(r)}{r}.
\end{equation}
This gives
\[
H_{\TP}(r)=\e^{-\alpha(r)}\Big(\pr{\TP=0}\int_{0}^r K(y)\e^{\alpha(y)}dy+C\Big)
\]
and
\begin{equation}
\label{GZlin}
G_{\TP}(r)=\pr{\TP=0}K(r)+\frac{Y(r)}{r}\e^{-\alpha(r)}\Big(\pr{\TP=0}\int_0^{r} K(y)\e^{\alpha(y)}dy+C\Big),
\end{equation}
for some constant $C$. As $G_{\TP}(r)$ is a generating function, boundary conditions are given by $G_{\TP}(0)=\pr{\TP=0}$ and $G_{\TP}(1)=1$.

To solve this boundary problem, note that, using integration by parts,
\[
\alpha(r) = -\log(r)Y(r) - \int_{r}^1 Y'(x)\log(x)dx.
\]
Further $Y(0)=1-G_{X}(0)$, so $Y(0)<1$ in this case, and for all $x \in [0,1]$, $Y'(x)\leq Y'(1) <\infty$ as $\expect{B^2}<\infty$ and $\expect{X^2}<\infty$. Therefore,
\[
\frac{Y(r)}{r}\e^{-\alpha(r)} = Y(r)r^{Y(r)-1} {\rm exp}\Big(\int_{r}^{1}Y'(x)\log(x)dx\Big)
\]
and thus
\[
\lim_{r\downarrow 0} \frac{Y(r)}{r}\e^{-\alpha(r)} \geq \lim_{r\downarrow 0}  Y(r){\rm exp}\Big((Y(r)-1)\log(r)\Big) {\rm exp}(-Y'(1)) = \infty.
\]
Hence, to get $G_{\TP}(0)=\pr{\TP=0}$, we need to set $C=0$, so that~\eqref{GZlin} becomes
\[
G_{\TP}(r)=\pr{\TP=0}\Big(K(r)+\frac{Y(r)}{r}\e^{-\alpha(r)}\int_0^{r} K(y)\e^{\alpha(y)}dy\Big).
\]
Finally, using $G_{\TP}(1)=1$ we find~\eqref{eq1overeq2}.
\end{proof}

Equation~\eqref{geng} can be used to find $G_{\TP}(\cdot)$ for other functions~$\DP(\cdot)$ as well. For example, if~$\DP(i)=g(i)$ for $i<I$ and $\DP(i)=a^i$ for $i \geq I$, for some function $g(\cdot)$, $0\leq g(\cdot) \leq 1$, and some $I\in \mathbb{N}_0$, one can use the same approach as used in the proof of Theorem~\ref{eqai} to find $G_{\TP}(\cdot)$ in terms of $\pr{L=j}$, $j=0,\dots,I-1$. Although interesting, it is beyond the scope of this paper to analyze this in more detail.

\subsection{State-dependent vacation lengths}
\label{nonhomsec}
In this subsection we consider a system that has state-dependent activation rules. More precisely, we assume that the server becomes active at the first jump of a non-homogeneous Poisson process with rate $f(i)$ when $L(t)=i$. This gives the following scenario.
\begin{scenario}
\label{scen2}
\[
\AP(i,m)=\frac{f(i+m)}{\lambda+f(i+m)}\prod_{j=0}^{m-1} \frac{\lambda}{\lambda +f(i+j)}, \quad i,m\geq 0,
\]
where $f:[0,\infty) \mapsto [0,\infty)$ and $f(0)=0$.
\end{scenario}
Note that we need $f(0)=0$ as $\AP(0,0)=0$ if and only if $f(0)=0$.
\begin{theorem}
\label{nonhomlin}
For {\rm Scenario~\ref{scen2}} with $\DP(i)=1$ and $f(i)=\nu i$, $i\geq 0$,
\begin{equation}
\label{nonhomlineq}
G_{\TP}(r)=\frac{(1 - \rho)\tilde{B}(\lambda(1-r))(1-r)}{\tilde{B}(\lambda(1-r))-r}{\rm exp}\Big({\int_r^1 \frac{-\lambda(1-x)}{\nu (\tilde{B}(\lambda(1-x))-x)}dx}\Big).
\end{equation}
\end{theorem}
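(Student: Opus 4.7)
The plan is to reduce \eqref{genG} to a first-order linear ODE for $G_{\TP}(r)$ and integrate it. With $\DP\equiv 1$, equation \eqref{genG} simplifies to
\[
G_{\TP}(r)\bigl(\tilde{B}(\lambda(1-r))-r\bigr) = \tilde{B}(\lambda(1-r))\,A(r),
\]
where $A(r):=\sum_{i\geq 0} r^i\pr{\TP=i}(1-G_{X_i}(r))$. The crux is to rewrite $A(r)$ as an integral functional of $G_{\TP}$ so that differentiation kills the integral and leaves a clean ODE.

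Set $\phi_i(r):=1-G_{X_i}(r)$. In Scenario~\ref{scen2} with $f(i)=\nu i$, the definition of $\AP(i,m)$ yields the recursion $(\lambda+\nu i)\phi_i(r)=\lambda(1-r)+\lambda r\,\phi_{i+1}(r)$ for every $i\geq 0$ (the $i=0$ case being consistent because $f(0)=0$ forces $G_{X_0}(r)=rG_{X_1}(r)$). Iterating the recursion gives $\phi_i(r)=\lambda(1-r)\sum_{k\geq 0}(\lambda r)^k/\prod_{j=0}^k(\lambda+\nu(i+j))$. Using the beta identity $\Gamma(c)/\Gamma(c+k+1)=(k!)^{-1}\int_0^1 u^k(1-u)^{c-1}du$ with $c=a+i$ and $a:=\lambda/\nu$, and interchanging the $k$-sum with the integral (Fubini--Tonelli), this collapses to the compact form $\phi_i(r)=a(1-r)\int_0^1 (1-u)^{a+i-1}e^{aru}\,du$.

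Substituting into $A(r)$ and swapping summation over $i$ with the integral (the factor $(1-u)^i$ reconstructs $G_{\TP}(r(1-u))$) produces $A(r)=a(1-r)\int_0^1(1-u)^{a-1}e^{aru}G_{\TP}(r(1-u))\,du$. The change of variable $v=r(1-u)$, combined with the balance identity $A(r)=G_{\TP}(r)(\tilde{B}(\lambda(1-r))-r)/\tilde{B}(\lambda(1-r))$, converts this into the integral equation
\[
\frac{r^a e^{-ar}\bigl(\tilde{B}(\lambda(1-r))-r\bigr)}{a(1-r)\tilde{B}(\lambda(1-r))}\,G_{\TP}(r) = \int_0^r v^{a-1}e^{-av}G_{\TP}(v)\,dv.
\]
Differentiating in $r$ eliminates the integral on the right via the fundamental theorem of calculus, and after collecting terms the result reduces to the first-order linear ODE
\[
\frac{G_{\TP}'(r)}{G_{\TP}(r)} = \frac{d}{dr}\log\!\left(\frac{\tilde{B}(\lambda(1-r))(1-r)}{\tilde{B}(\lambda(1-r))-r}\right) + \frac{\lambda(1-r)}{\nu\bigl(\tilde{B}(\lambda(1-r))-r\bigr)}.
\]

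Integrating from $r$ to $1$, the first term on the right reconstructs the Pollaczek--Khinchin factor of \eqref{PKform2}, and the second produces the exponential integral in \eqref{nonhomlineq}; the multiplicative constant $1-\rho$ is fixed by imposing $G_{\TP}(1)=1$ via L'Hopital applied to the removable singularity of the PK factor at $r=1$. The main obstacle will be the algebraic bookkeeping after differentiation, where several seemingly unrelated terms must cancel to leave the clean additive decomposition above (this cancellation is driven precisely by the identity $\lambda=a\nu$). One must also check that the apparent $r^{-a}$ factor is harmless at $r=0$, which follows since $a>0$ makes $v^{a-1}G_{\TP}(v)$ integrable near the origin.
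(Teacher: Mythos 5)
Your proposal is correct, and it reaches \eqref{nonhomlineq} by a genuinely different route from the paper. The paper's proof is probabilistic and modular: it invokes the Fuhrmann--Cooper decomposition \eqref{fuco1} to reduce the problem to computing $G_{\TP_I}$, then identifies the vacation-period queue length with a continuous-time branching process with immigration (services are cut out and replaced by instantaneous jumps occurring at rate $\nu i$) and reads off $G_{\TP_I}(r)=\exp\bigl(\int_r^1 \tfrac{-\lambda(1-x)}{\nu(\tilde{B}(\lambda(1-x))-x)}dx\bigr)$ from Sevast'yanov's theorem. You instead work entirely inside the functional equation \eqref{genG}: the recursion $(\lambda+\nu i)\phi_i(r)=\lambda(1-r)+\lambda r\,\phi_{i+1}(r)$ is correct (it follows from $\AP(i,m)=\tfrac{\lambda}{\lambda+f(i)}\AP(i+1,m-1)$), the beta-integral representation $\phi_i(r)=a(1-r)\int_0^1(1-u)^{a+i-1}e^{aru}du$ is valid since $a=\lambda/\nu>0$, and I have checked that the differentiation of your integral equation does produce exactly the logarithmic derivative of the Pollaczek--Khinchin factor plus $\tfrac{\lambda(1-r)}{\nu(\tilde{B}(\lambda(1-r))-r)}$ (the cancellation reduces to $\tfrac{a}{r}\bigl(\tfrac{(1-r)\tilde{B}}{\tilde{B}-r}-1+r\bigr)=\tfrac{a(1-r)}{\tilde{B}-r}$). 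What your approach buys is self-containedness: no appeal to Sevast'yanov or to the decomposition identity, and a direct verification that \eqref{genG} is consistent with the claimed formula. What it costs is the algebraic bookkeeping you flag, plus a few routine justifications you should state (the remainder term in the iteration of the $\phi_i$-recursion vanishes because $\phi_{i+k}\leq 1$; Tonelli applies since all terms are nonnegative; $G_{\TP}(r)>0$ and $\tilde{B}(\lambda(1-r))-r>0$ on $(0,1)$ so the division defining the logarithmic derivative is legitimate; and stability, which the paper notes follows from Lemma~\ref{stable} since $\expect{X_i}\to 0$). The paper's argument, by contrast, generalizes more readily to other linear-rate branching structures but leans on the external branching-process result.
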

\begin{proof}
First note that, for all $m>0$, $\AP(i,m) \rightarrow 0$ as $i \rightarrow \infty$. So $\expect{X_i} \rightarrow 0$ and the system is stable if $\rho < 1$, see Lemma~\ref{stable}.

To prove this theorem we will first determine $G_{\TP_I}(r)$, and then $G_{\TP}(r)$ follows from the Fuhrmann-Cooper decomposition~\eqref{fuco1}.

In order to determine the distribution of $\TP_I$ we cut out all the services and replace these by instantaneous jumps whose sizes are the number of arrivals $\TP_A$ during an arbitrary service time, with $G_{\TP_A}(r)=\tilde{B}(\lambda(1-r))$. These jumps occur at rate $\nu i$ when there are $i$~customers in the system. Thus, as the function $f(\cdot)$ is linear, the distribution of $\TP_I$ corresponds to that of a continuous-time branching process with immigration: Particles arrive as a Poisson process with rate~$\lambda$ and each particle is independently and instantaneously replaced at rate~$\nu$ by a new group of $\TP_A$ particles. Branching processes of this type were studied by Sevast'yanov~\cite{Sev57} and applying \cite[Theorem 1]{Sev57} to our situation yields
\[
G_{\TP_I}(r)={\rm exp}\Big({\int_r^1 \frac{-\lambda(1-x)}{\nu (\tilde{B}(\lambda(1-x))-x)}dx}\Big).
\]
By~\eqref{fuco1} we then obtain~\eqref{nonhomlineq}.
\end{proof}

\subsubsection{Exponentially distributed service times}
For generally distributed service times it is difficult to interpret Theorem~\ref{nonhomlin}, but for exponentially distributed service times we can using the following result.
\begin{corollary}
\label{nonhomlinexp}
For {\rm Scenario~\ref{scen2}} with $\DP(i)=1$ and $f(i)=\nu i$, $i \geq 0$, and exponentially distributed service times with mean $1/\mu$,
\begin{equation}
\label{nonhomlinexpeq}
G_{\TP}(r)=\left(\frac{1-\rho}{1-\rho r}\right)^{1+\lambda/\nu} \e^{(r-1)\lambda/\nu}.
\end{equation}
\end{corollary}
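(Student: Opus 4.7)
The plan is to specialize Theorem~\ref{nonhomlin} to the exponential case by substituting $\tilde{B}(\lambda(1-r))=\mu/(\mu+\lambda(1-r))$ and evaluating the two resulting pieces (the Pollaczek--Khinchine-type prefactor and the exponential integral) by elementary algebra.

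First I would simplify the prefactor. Writing
\[
\tilde B(\lambda(1-r))-r=\frac{\mu}{\mu+\lambda(1-r)}-r=\frac{(1-r)(\mu-\lambda r)}{\mu+\lambda(1-r)},
\]
the Pollaczek--Khinchine-type factor in \eqref{nonhomlineq} collapses to
\[
\frac{(1-\rho)\tilde B(\lambda(1-r))(1-r)}{\tilde B(\lambda(1-r))-r}=\frac{(1-\rho)\mu}{\mu-\lambda r}=\frac{1-\rho}{1-\rho r},
\]
using $\rho=\lambda/\mu$. This produces the factor $(1-\rho)/(1-\rho r)$ to the first power; the remaining power $\lambda/\nu$ and the exponential factor must come out of the integral.

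Next I would evaluate the integrand of the exponential term. Using the identity above,
\[
\frac{-\lambda(1-x)}{\nu(\tilde B(\lambda(1-x))-x)}=\frac{-\lambda\bigl(\mu+\lambda(1-x)\bigr)}{\nu(\mu-\lambda x)}=\frac{-\lambda}{\nu}\Bigl(1+\frac{\lambda}{\mu-\lambda x}\Bigr),
\]
where I split $\mu+\lambda(1-x)=(\mu-\lambda x)+\lambda$. Integrating termwise from $r$ to $1$ gives
\[
\int_r^1\frac{-\lambda(1-x)}{\nu(\tilde B(\lambda(1-x))-x)}\,dx=\frac{\lambda(r-1)}{\nu}+\frac{\lambda}{\nu}\log\frac{\mu-\lambda}{\mu-\lambda r}=\frac{\lambda(r-1)}{\nu}+\frac{\lambda}{\nu}\log\frac{1-\rho}{1-\rho r}.
\]
Exponentiating produces exactly $e^{(r-1)\lambda/\nu}\bigl((1-\rho)/(1-\rho r)\bigr)^{\lambda/\nu}$.

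Combining the two pieces yields
\[
G_{\TP}(r)=\frac{1-\rho}{1-\rho r}\cdot\Bigl(\frac{1-\rho}{1-\rho r}\Bigr)^{\lambda/\nu}e^{(r-1)\lambda/\nu}=\Bigl(\frac{1-\rho}{1-\rho r}\Bigr)^{1+\lambda/\nu}e^{(r-1)\lambda/\nu},
\]
which is \eqref{nonhomlinexpeq}. There is no real obstacle here; the only mildly delicate step is recognizing the splitting $\mu+\lambda(1-x)=(\mu-\lambda x)+\lambda$ that decouples the integrand into a constant and a logarithmic term, so the entire proof reduces to a short substitution-and-integration calculation.
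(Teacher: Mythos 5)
Your computation is correct and is precisely the route the paper takes: its proof of Corollary~\ref{nonhomlinexp} consists of the single sentence ``Evaluating the integral in Theorem~\ref{nonhomlin} with $\tilde{B}(s)=\frac{\mu}{\mu+s}$ gives~\eqref{nonhomlinexpeq}.'' You have simply written out the substitution and integration that the paper leaves implicit, and every step checks out.
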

\begin{proof}
Evaluating the integral in Theorem~\ref{nonhomlin} with $\tilde{B}(s)=\frac{\mu}{\mu+s}$ gives~\eqref{nonhomlinexpeq}.
\end{proof}
Notice that~\eqref{nonhomlinexpeq} is the product of two generating functions, so that the distribution of~$\TP$ is a convolution of a negative binomial distribution and a Poisson distribution.

For exponentially distributed service times, and any choice of $f(\cdot)$ and $\DP(\cdot)$, $(\TP(t),\sigma(t))_{t\geq 0}$ is a continuous-time Markov process with state space $\{0,1,2,\dots\} \times \{0,1\}$ and state $(i,j)$ representing $i$ customers in the system and $j=0$ when the server is inactive and $j=1$ when the server is active. Transitions from $(i,0)$ to $(i+1,0)$ and from $(i,1)$ to $(i + 1,1)$ occur at rate~$\lambda$, corresponding to an arrival of a customer, and transitions from $(i,0)$ to $(i,1)$ occur at rate~$f(i)$, corresponding to a server activation. Further, transitions from $(i + 1,1)$ to $(i,0)$ occur at rate~$\mu \DP(i)$, corresponding to a service completion and server de-activation. Finally, transitions from $(i + 1,1)$ to $(i,1)$ occur at rate $\mu(1- \DP(i))$, corresponding to a service completion without server de-activation. With $\stat{i}{k}$ the stationary probability that the Markov process resides in state $(i,k)$, we have the balance equations
\begin{eqnarray*}
\lambda \stat{0}{0} &=& \mu\stat{1}{1},\\
(\lambda+\mu) \stat{1}{1} &=& f(1)\stat{1}{0} + \mu(1-\DP(1))\stat{2}{1},\\
(\lambda + f(i))\stat{i}{0} &=& \lambda\stat{i-1}{0} + \mu \DP(i)\stat{i+1}{1}, \quad i\geq 1,\\
(\lambda + \mu)\stat{i}{1} &=& \lambda\stat{i-1}{1}+f(i)\stat{i}{0}+\mu(1-\DP(i))\stat{i+1}{1}, i\geq 2.
\end{eqnarray*}

This set of balance equations can be solved for several choices of $f(\cdot)$ and $\DP(\cdot)$. For example, $f(i)=b^i$, with $b>1$, and $\DP(i)=1$, $i\geq 0$, yields a result similar to the result of Theorem~\ref{eqai}. Also, the result of Corollary~\ref{nonhomlinexp} can be derived in this way.

The next theorem gives a class of functions for which the distribution of the total number of customers in the system in steady state is negative binomial.
\begin{theorem}
\label{negativbin}
For {\rm Scenario~\ref{scen2}} with $\DP(i)=k/(k+i)$, $i\geq 1$, and $f(i)=\mu i/(i+k-1)$, $i \geq 0$, with $k\geq 0$, and exponentially distributed service times with mean $1/\mu$,
\begin{equation}
\label{negativbineq}
G_{\TP}(r)= \left(\frac{1-\rho}{1-\rho r}\right)^{k+1}.
\end{equation}
\end{theorem}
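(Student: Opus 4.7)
The plan is to verify directly that the joint stationary distribution of the continuous-time Markov process $(\TP(t),\sigma(t))$ takes the form
\[
\stat{i}{0}=\binom{k+i-1}{i}(1-\rho)^{k+1}\rho^{i},\ i\geq 0,\qquad \stat{i}{1}=\binom{k+i-1}{i-1}(1-\rho)^{k+1}\rho^{i},\ i\geq 1,
\]
with $\stat{0}{1}=0$ (reading $\binom{k+i-1}{i}$ via the usual $\Gamma$-extension when $k$ is non-integer). Pascal's identity $\binom{k+i}{i}=\binom{k+i-1}{i}+\binom{k+i-1}{i-1}$ then yields $\pr{\TP=i}=\binom{k+i}{i}(1-\rho)^{k+1}\rho^{i}$, whose generating function is precisely~\eqref{negativbineq}; nonnegativity and summation to one are immediate.

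Verification of the balance equations listed just above the theorem hinges on two algebraic identities tailored to the chosen $f$ and $\DP$. Since $f(i)(i+k-1)=\mu i$ and $\binom{k+i-1}{i}/\binom{k+i-2}{i-1}=(k+i-1)/i$, one gets $f(i)\stat{i}{0}=\lambda\stat{i-1}{0}$; since $\DP(i)(k+i)=k$ and $\binom{k+i}{i}/\binom{k+i-1}{i}=(k+i)/k$, one gets $\mu\DP(i)\stat{i+1}{1}=\lambda\stat{i}{0}$. Plugging both into the balance at $(i,0)$ with $i\geq 1$ reduces it to the tautology $(\lambda+f(i))\stat{i}{0}=f(i)\stat{i}{0}+\lambda\stat{i}{0}$, while the boundary case $\lambda\stat{0}{0}=\mu\stat{1}{1}$ follows directly from $\stat{0}{0}=(1-\rho)^{k+1}$ and $\stat{1}{1}=\rho(1-\rho)^{k+1}$.

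For the balance at $(i,1)$ with $i\geq 2$, the key auxiliary observation is $\stat{i}{1}=\rho\,\pr{\TP=i-1}$, immediate from the guess. Combining this with $f(i)\stat{i}{0}=\lambda\stat{i-1}{0}$ yields $\lambda\stat{i-1}{1}+f(i)\stat{i}{0}=\lambda\,\pr{\TP=i-1}=\mu\stat{i}{1}$, so the equation collapses to $\lambda\stat{i}{1}=\mu(1-\DP(i))\stat{i+1}{1}$. This in turn is checked using $1-\DP(i)=i/(k+i)$, $\stat{i+1}{1}=\rho\,\pr{\TP=i}$, and the negative-binomial ratio $\pr{\TP=i}/\pr{\TP=i-1}=\rho(k+i)/i$. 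The boundary $i=1$ is handled separately by a short direct computation using $\stat{1}{0}=k\rho(1-\rho)^{k+1}$ and $\stat{2}{1}=(k+1)\rho^{2}(1-\rho)^{k+1}$.

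The main technical obstacle is the bookkeeping of the binomial-coefficient ratios, particularly at the boundary indices; apart from that the verification is mechanical once the two central identities $f(i)\stat{i}{0}=\lambda\stat{i-1}{0}$ and $\mu\DP(i)\stat{i+1}{1}=\lambda\stat{i}{0}$ are in place. An alternative route via the Fuhrmann--Cooper decomposition~\eqref{fuco1} would require independently identifying the state-dependent-vacation-period generating function $G_{\TP_I}(r)=((1-\rho)/(1-\rho r))^{k}$, which looks less direct for this state-dependent $f$, so I prefer the balance-equation approach.
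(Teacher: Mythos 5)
Your proposal is correct and follows essentially the same route as the paper, which also verifies the guessed stationary probabilities $\stat{i}{0}=\binom{i+k-1}{i}(1-\rho)^{k+1}\rho^i$ and $\stat{i}{1}=\binom{i+k-1}{i-1}(1-\rho)^{k+1}\rho^i$ against the balance equations and then sums via Pascal's identity; you merely spell out the algebraic checks that the paper leaves as "it can be checked."
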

\begin{proof}
It can be checked that
\[
\stat{i}{0}=\binom{i+k-1}{i}(1-\rho)^{k+1}\rho^i,
\]
and
\[
\stat{i}{1}=\binom{i+k-1}{i-1}(1-\rho)^{k+1}\rho^i,
\]
solve the set of balance equations and the normalization equation $\sum_{i,j} \stat{i}{j}=1$. Thus $\pr{L=0}=\stat{0}{0}=(1-\rho)^{k+1}$ and for $i\geq 1$,
\[
\pr{L=i}=\stat{i}{0}+\stat{i}{1}=\binom{i+k}{i}(1-\rho)^{k+1}\rho^i,
\]
which gives~\eqref{negativbineq}.
\end{proof}

Note that the functions in Theorem~\ref{negativbin} describe an M/M/1 queue if $k=0$, as one always has an immediate transition from $(1,0)$ to $(1,1)$ and there are no transitions from $(i,1)$ to $(k,0)$ for any $k \geq 0$ if $i\geq 2$.

Further, $k=1$ leads to a special case of Theorem~\ref{eq1over}, because $\DP(i)=1/(i+1)$, $i \geq 1$, the service times are exponentially distributed with mean $1/\mu$ and the vacation discipline of Example~\ref{vactime} is used with the vacation time distribution identical to the service time distribution.

The results of Corollary~\ref{nonhomlinexp} and Theorem~\ref{negativbin} could also be derived using a probabilistic approach. For the situation of Corollary~\ref{nonhomlinexp} the number of customers at an arbitrary epoch during a vacation period~$\TP_I$ can be related to the customers in a network of infinite-server queues with phase-type service requirement distributions.

For the situation of Theorem~\ref{negativbin} the vacation model behaves as an M/M/1 queue with $k$ permanent customers and a Random-Order-of-Service (ROS) discipline. The ROS discipline selects the next customer for service at random from those which were in the queue just before the service completion, and excludes a permanent customer whose service may just have been completed.

\section{Bounds}
\label{secbounds}
In Section~\ref{exactana} we derived exact results for several choices of $\AP(\cdot)$ and $\DP(\cdot)$. In this section we will derive bounds for the mean number of customers in the system. These bounds will be used in the heavy-traffic analysis in Section~\ref{HTresults}.
\subsection{Equal vacation distributions}
In this subsection we consider the class of vacation disciplines of Scenario~\ref{scen1} as described in Subsection~\ref{equalvac}. For this class we find the following lower bound.
\begin{theorem}
\label{boundg}
For {\rm Scenario~\ref{scen1}} and when $\DP(\cdot)$ is a strictly decreasing convex function,
\[
\expect{\TP} \geq \max\Big\{\DP^{-1}\Big(\frac{1-\rho}{\expect{X}}\Big), \frac{\lambda^2\expect{B^2}}{2(1-\rho)}+\rho \Big\}.
\]
\end{theorem}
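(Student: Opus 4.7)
My plan is to split the $\max$ and prove each lower bound separately; both fall out of tools already developed in Section~\ref{exactana}.

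The bound $\expect{\TP} \geq \rho + \lambda^2 \expect{B^2}/(2(1-\rho))$ is just the Pollaczek-Khinchin mean queue length for an ordinary M/G/1 queue without vacations, and I would read it directly off the Fuhrmann-Cooper decomposition~\eqref{fuco1}. Since $G_{\TP}(r) = G_{\TP_{\rm M/G/1}}(r)\,G_{\TP_I}(r)$, $\TP$ equals in distribution an independent sum $\TP_{\rm M/G/1} + \TP_I$, so $\expect{\TP} = \expect{\TP_{\rm M/G/1}} + \expect{\TP_I} \geq \expect{\TP_{\rm M/G/1}}$, and the Pollaczek-Khinchin mean formula finishes this part.

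For $\expect{\TP} \geq \DP^{-1}\!\bigl((1-\rho)/\expect{X}\bigr)$, I would combine Jensen's inequality with the first-moment balance derived just before~\eqref{fuco1}. As in the text, let
\[
\gamma = \sum_{i \geq 0} \DP(i)\,\pr{\TP = i} = \expect{\DP(\TP)}.
\]
Since $\DP$ is convex, Jensen gives $\gamma \geq \DP(\expect{\TP})$. On the other hand, the identity $\expect{\TP_{\rm end}} - \expect{\TP_{\rm begin}} = (1-\rho)/\gamma$ established in the derivation of~\eqref{fuco1} is nothing but the expected number of arrivals during a vacation, which may be expanded over the starting state as
\[
\frac{1-\rho}{\gamma} = \sum_{i \geq 0} \expect{X_i}\,\pr{\TP_{\rm begin} = i} \geq \expect{X},
\]
because under Scenario~\ref{scen1} one has $\expect{X_i} = \expect{X}$ for $i \geq 1$ and $\expect{X_0} \geq \expect{X}$ (the latter since $X_0 \stl X$). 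Hence $\gamma \leq (1-\rho)/\expect{X}$. Since $\DP$ is strictly decreasing, $\DP^{-1}$ is strictly decreasing as well, so applying it to the chain $\DP(\expect{\TP}) \leq \gamma \leq (1-\rho)/\expect{X}$ yields exactly the first lower bound.

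There is no serious obstacle: the proof rests on recognizing that $\gamma$ is simultaneously a Jensen-amenable image of $\TP$ under the convex $\DP$ and a quantity pinned down by the first-moment throughput identity behind~\eqref{fuco1}. The only care-points are applying $\DP^{-1}$ on its decreasing branch and absorbing the slight asymmetry between $X_0$ and $X$ in Scenario~\ref{scen1} via $X_0 \stl X$.
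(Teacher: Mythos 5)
Your proof is correct and follows essentially the same route as the paper's: Jensen's inequality applied to the convex $\DP$ together with a conservation argument yielding $\expect{\DP(\TP)} \leq (1-\rho)/\expect{X}$, plus the Fuhrmann--Cooper decomposition~\eqref{fuco1} for the M/G/1 term. The only cosmetic difference is that you extract the bound on $\gamma=\expect{\DP(\TP)}$ from the embedded-chain arrival-count identity $\expect{{\TP}_{\rm end}}-\expect{{\TP}_{\rm begin}}=(1-\rho)/\gamma$ derived before~\eqref{fuco1}, whereas the paper uses the equivalent continuous-time rate balance~\eqref{actisdeactA} between activations and de-activations.
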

\begin{proof}
In steady state, the mean number of activations per unit of time equals the mean number of de-activations per unit of time, so that
\begin{equation}
\label{actisdeactA}
\pr{\sigma=1}\frac{1}{\expect{B}}\expect{\DP(Z)}=\frac{\lambda}{\expect{X}}\pr{\sigma=0 \cap L>0},
\end{equation}
where $\sigma$ denotes the random variable with the steady-state distribution of the state of the server, i.e.,
\[
\pr{\sigma=j}=\lim_{t\rightarrow\infty}\pr{\sigma(t)=j|\sigma(0)=k},
\]
and $Z$ denotes, as before, the steady-state number of customers in the system right after service completions. Because $Z \eqd \TP$ and $\pr{\sigma=1}=\rho$,
\[
\lambda\expect{\DP(\TP)} \leq \frac{\lambda}{\expect{X}} \pr{\sigma=0} = \frac{\lambda}{\expect{X}} (1-\rho).
\]
Further, it follows from Jensen's inequality that, as $\DP(\cdot)$ is convex,
\[
\expect{\DP(\TP)} \geq \DP(\expect{\TP}).
\]
Since $\DP(\cdot)$ is decreasing we then get
\begin{equation}
\label{glowb}
\expect{L} \geq \DP^{-1}\Big(\frac{1-\rho}{\expect{X}}\Big).
\end{equation}
Finally, the Fuhrmann-Cooper decomposition~\eqref{fuco1} implies
\[
\expect{\TP} = \frac{\lambda^2 \expect{B^2}}{2(1 - \rho)} + \rho + \expect{\TP_{I}},
\]
where $\TP_I$ denotes the number of customers during a non-serving (vacation) period. We thus find
\begin{equation}
\label{FCbound}
\expect{\TP} \geq \frac{\lambda^2 \expect{B^2}}{2(1 - \rho)} + \rho.
\end{equation}
Combining~\eqref{FCbound} with~\eqref{glowb} gives the desired result.

Another way to explain~\eqref{FCbound} is that the average number of customers in a queue with vacations is at least the average number of customers in a queue without vacations, a standard M/G/1 queue.
\end{proof}
In order to investigate how tight the bounds derived in Theorem~\ref{boundg} are, we consider the case of exponentially distributed service times with mean~$1$. Further assume the vacation discipline of Example~\ref{vactime} with the vacation time distribution identical to the service time distribution. By Theorem~\ref{negativbin} we then find for $\DP(i)=1/(i+1)$ that $\expect{\TP}=2 \rho/(1-\rho)$, while Theorem~\ref{boundg} gives $\expect{\TP}\geq \rho/(1-\rho)$. So in this case the bound is off by a factor 2.
\begin{figure}[t]
  \centering
  \includegraphics[width=5in,keepaspectratio]{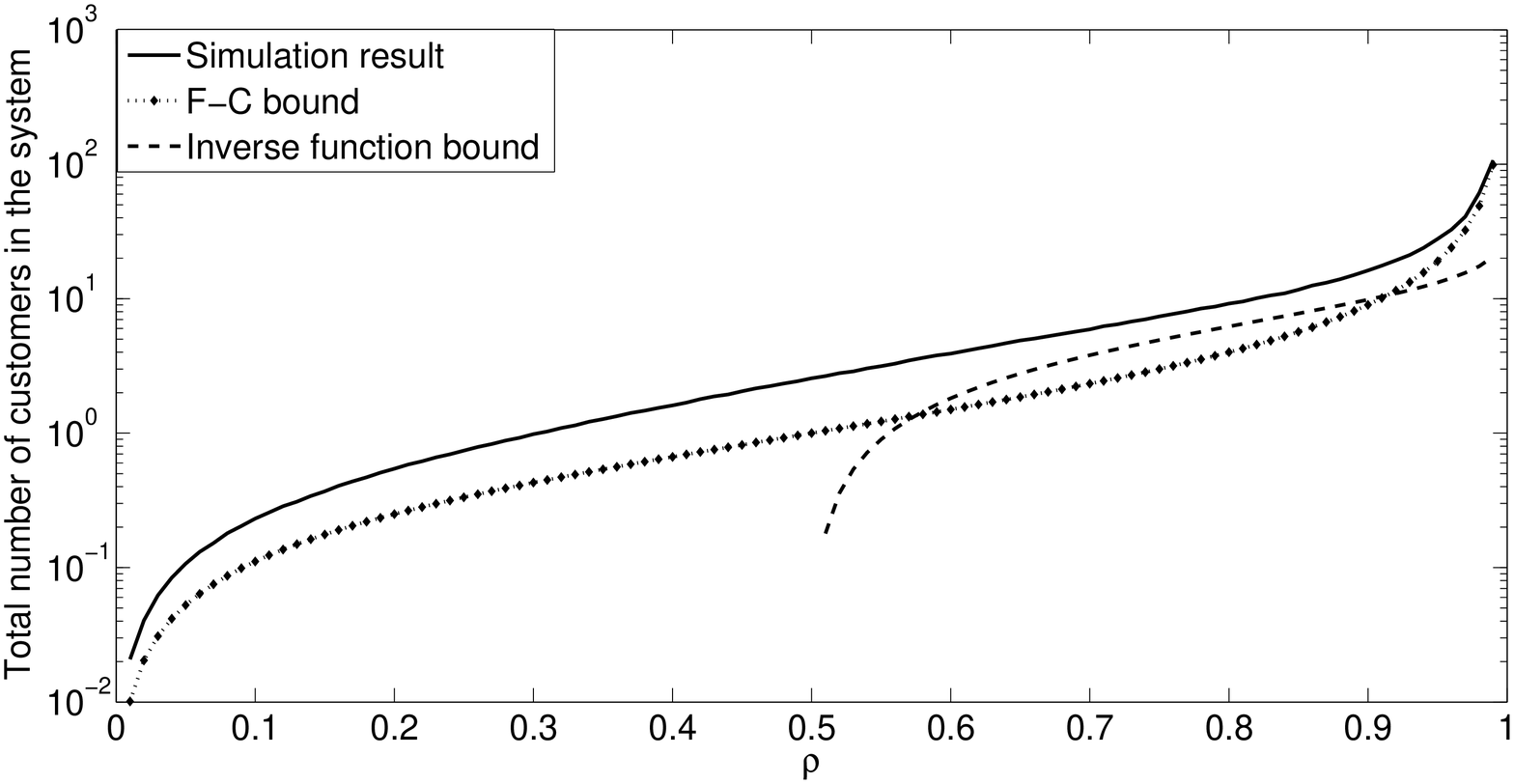}
  \caption{Average number of customers in the system for $\DP(i)=0.8^i$ and $\rho\in[0,1)$.}
  \label{a08deact}
\end{figure}
\begin{figure}[t]
  \centering
  \includegraphics[width=5in,keepaspectratio]{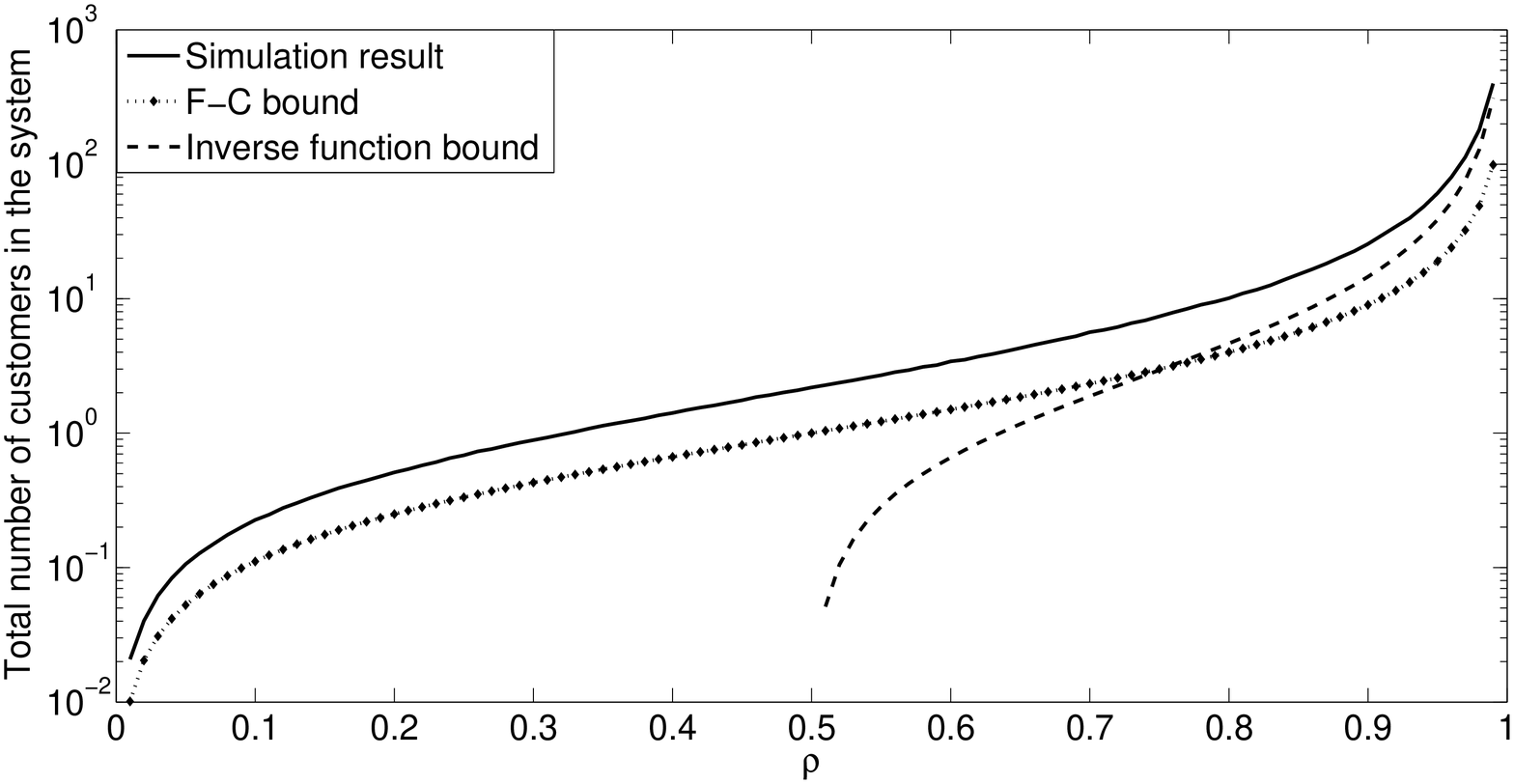}
  \caption{Average number of customers in the system for $\DP(i)=(1/(i+1))^{0.8}$ and $\rho\in[0,1)$.}
  \label{1overi08deact}
\end{figure}
We performed several numerical experiments for other de-activation probabilities $\DP(\cdot)$. Two typical results are given in Figures~\ref{a08deact} and~\ref{1overi08deact}, which show simulation results for the average number of customers in the system for $\DP(i)=0.8^i$ and $\DP(i)=(1/(i+1))^{0.8}$, respectively. We further added the two lower bounds derived in Theorem~\ref{boundg}, the inverse function bound~\eqref{glowb} and the bound that follows from the Fuhrmann-Cooper decomposition~\eqref{FCbound}. Note that we used a log-lin scale for graphical reasons.

For $\DP(i)=0.8^i$ we see that the simulation results are close to~\eqref{FCbound} for values of $\rho$ close to $1$, i.e.~the bound in Theorem~\ref{boundg} seems rather tight in heavy traffic and the average number of customers is close to the average number of customers in a standard M/G/1 queue.

For $\DP(i)=(1/(i+1))^{0.8}$ the simulation results are close to the inverse function bound~\eqref{glowb} for values of $\rho$ close to $1$, i.e.~the bound in Theorem~\ref{boundg} seems rather tight in heavy traffic for this choice for $\DP(\cdot)$ as well. In Section~\ref{HTresults} we will prove that the bound in Theorem~\ref{boundg} is asymptotically exact in heavy traffic for the cases considered here.

Next we compare the two processes $\{\TP(t),\sigma(t)\}_{t\geq 0}$ with de-activation probability $\DP(\cdot)$ and $\{\hat{\TP}(t),\hat{\sigma}(t)\}_{t\geq 0}$ with de-activation probability $\hat{\DP}(\cdot)$.

\begin{lemma}
\label{gcoup}
For the vacation discipline described in {\rm Example~\ref{vactime}}, and assuming that $ \hat{\DP}(i) \geq \DP(i)$, $i \geq 0$, $\hat{\TP}(0)=\TP(0)$ and $\hat{\sigma}(0)=\sigma(0)=0$, $\{\hat{\TP}(t)\}_{t\geq 0} \geq_{{\rm st}} \{\TP(t)\}_{t\geq 0}$.
\end{lemma}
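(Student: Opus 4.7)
The plan is to establish the ordering via a pathwise coupling on a common probability space. Both systems will share a single Poisson arrival process of rate $\lambda$, common i.i.d.\ sequences of vacation durations $\{V_k\}$ and service times $\{B_k\}$, and common i.i.d.\ uniforms $\{U_k\}$ used for de-activation decisions: at a service completion leaving $i$ customers, system~1 de-activates iff the current uniform is below $\DP(i)$, and system~2 iff it is below $\hat\DP(i)$. Because $\hat\DP\geq\DP$, whenever the two systems agree on the queue length $i$ at a service completion, any de-activation in system~1 is automatically accompanied by one in system~2, but system~2 may de-activate where system~1 does not.

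First I would verify the base case from $\hat\TP(0)=\TP(0)$ with both servers starting the common vacation $V_1$. The two systems then evolve in lockstep, consuming the common randomness in the same order, until the first ``divergent'' service completion at which $\DP(i)\leq U<\hat\DP(i)$, where system~2 enters a vacation while system~1 starts the next service. From that divergence onwards I would argue by induction along the merged event timeline that $\hat\TP(t)\geq\TP(t)$ is preserved. Arrivals are common and maintain the inequality trivially; a service completion in either system decreases only its own queue, so the only dangerous case is a completion in system~2 at a moment when $\hat\TP(t)=\TP(t)$. To exclude this I would propagate the strengthened invariant that, whenever $\hat\TP(t)=\TP(t)$, either the systems are fully synchronised (identical mode and residual timer, sharing subsequent random driving) or system~1 is active while system~2 is on vacation. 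The second branch rules out any completion in system~2 during equality, and the first branch keeps the systems locked together until the next divergent de-activation, which again falls into the second branch.

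The principal obstacle is maintaining the strengthened invariant through the moment at which system~2 returns from one of its ``extra'' vacations. If during that vacation either an arrival has occurred or system~1 has completed a service, then $\hat\TP(t)>\TP(t)$ strictly at the end of system~2's vacation and the invariant is preserved with room to spare. The delicate event is the one on which neither an arrival nor a completion of system~1 occurs during the whole extra vacation, since system~2 would then become active with $\hat\TP=\TP$ and an independent residual service timer, a configuration that the strengthened invariant must forbid. This corner is the technical crux of the proof and is where the coupling must be constructed with care, for instance by re-synchronising the residual service timers of the two systems on this conditional event so that both completions occur simultaneously; such a re-synchronisation can be arranged without altering the marginal law of system~2 because the conditional probability of the event in question is already determined by the joint driving. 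Iterating this construction through the subsequent divergent de-activations then delivers the desired pathwise dominance $\hat\TP(t)\geq\TP(t)$ for all $t\geq 0$.
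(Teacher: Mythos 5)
Your overall architecture --- a pathwise coupling driven by a common arrival process, common service and vacation sequences, and common uniforms for the de-activation decisions, together with an induction along the merged event timeline and a strengthened invariant distinguishing ``fully synchronised'' from ``system 1 active, system 2 on vacation'' --- is essentially the architecture of the paper's proof, and you correctly isolate the crux: the moment system 2 returns from an extra vacation during which neither an arrival nor a completion in system 1 occurred, so that $\hat{\TP}=\TP$ with system 1 mid-service. However, your resolution of that crux does not work. Re-synchronising the timers so that ``both completions occur simultaneously'' forces system 2's service duration to equal the \emph{residual} of system 1's current service (the realization minus the elapsed time), and for a general service-time distribution $F_B$ this residual is not $F_B$-distributed; the marginal law of the $\hat{\DP}$-system is therefore altered. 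The appeal to ``the conditional probability of the event being determined by the joint driving'' does not repair this, since the distortion is in the law of the duration, not in the probability of the event. Only for exponential $B$ would memorylessness save this step, and the lemma is stated for general $F_B$.

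The paper's device at this point is different and avoids the problem: the common sequence $\{B_k\}$ is indexed by each system's \emph{own} service-completion counter, so that when system 2 activates after its extra vacation it begins its $(n{+}1)$-st service using the full realization $B_{n+1}$ --- the very realization that system 1 started earlier and has already partially consumed. Then system 1 necessarily completes no later than system 2, the queue lengths separate (and the strict-inequality branch of the invariant takes over), and each system's service times remain i.i.d.\ with law $F_B$ by construction; no re-synchronisation is needed. The induction is then closed by carrying the activation and completion counters in the invariant --- in particular $\hat{\TP}^{*}(t)-\TP^{*}(t)=N_B^{*}(t)-\hat{N}_B^{*}(t)$ together with $\hat{N}_V^{*}(t)\geq N_V^{*}(t)$ on the strict-inequality branch --- which is what excludes the dangerous configuration in which system 2 completes a service first while the queue lengths are equal. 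Replacing your re-synchronisation step by this indexing scheme, and adding the counter inequalities to your invariant, would turn your sketch into the paper's argument.
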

\begin{proof}
The proof is based on a coupling $(\{\TP^{*}(t),\sigma^{*}(t)\}_{t\geq 0},\{\hat{\TP}^{*}(t),\hat{\sigma}^{*}(t)\}_{t\geq 0})$ between $\{\TP(t),\sigma(t)\}_{t\geq 0}$ and $\{\hat{\TP}(t),\hat{\sigma}(t)\}_{t\geq 0}$. That is, we construct the sample path of the coupled systems recursively such that, marginally, this sample path obeys the same probabilistic laws as the original process. Further we make sure that arrivals in both systems happen at the same time and that the $n$-th service takes the same amount of time in both systems. We also make sure the $n$-th `wait time', i.e.~the time $V$ described in Example~\ref{vactime}, is equal in both systems. Finally, we make sure that the system with de-activation probability $\hat{\DP}(\cdot)$ always de-activates if the system with de-activation probability $\DP(\cdot)$ de-activates, if the total amount of customers in both systems is equal and a service ends in both systems. This will make sure that $\hat{\TP}^{*}(t)\geq \TP^{*}(t)$ for all $t\geq 0$. A formal proof is given in Appendix~\ref{appendix}.
\end{proof}

Note that we only proved the result of Lemma~\ref{gcoup} for the vacation discipline of Example~\ref{vactime}. For general vacation disciplines, which may depend on the arrival process, Lemma~\ref{gcoup} does not always hold. It can for example be checked that Lemma~\ref{gcoup} does not hold for $G_{X_0}(r)=r^{100}$, $G_{X}(r)=r$, $\DP(i)=0$ for $i \geq 1$ and $\hat{\DP}(i)=0.1$ for $i \geq 1$.

Combining Lemma~\ref{gcoup} and Theorem~\ref{boundg} leads to a lower bound for the mean number of customers in a system with de-activation probability $\hat{\DP}(\cdot)$ if there exists a strictly decreasing convex function $\DP(\cdot)$ such that $\DP(i)\leq \hat{\DP}(i)$ for all $i$. We get
\[
\expect{\hat{L}} \geq \expect{L} \geq \max\Big\{\DP^{-1}\Big(\frac{1-\rho}{\expect{X}}\Big), \frac{\lambda^2\expect{B^2}}{2(1-\rho)}+\rho \Big\}.
\]

\subsection{State-dependent vacation lengths}
In this subsection we consider the vacation disciplines obeying Scenario~\ref{scen2}. For this class of vacation disciplines we find the following bounds.
\begin{theorem}
\label{boundf}
For {\rm Scenario~\ref{scen2}} and $\DP(i)=1$, $i \geq 0$,

${\rm (i)}$ If $f(\cdot)$ is a strictly increasing, unbounded and concave function,
\begin{equation}\label{boundfconc}
\expect{\TP} \geq \frac{\lambda^2 \expect{B^2}}{2(1 - \rho)} + \rho + f^{- 1}\Big(\frac{\lambda}{1 - \rho}\Big).
\end{equation}

${\rm (ii)}$ If $f(\cdot)$ is a strictly increasing convex function,
\begin{equation}\label{boundfconv}
\expect{\TP} \leq \frac{\lambda^2 \expect{B^2}}{2(1 - \rho)} + \rho + f^{- 1}\Big(\frac{\lambda}{1 - \rho}\Big).
\end{equation}
\end{theorem}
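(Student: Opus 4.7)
The proof closely parallels that of Theorem~\ref{boundg}: I will (a) equate the long-run rates of activations and de-activations in steady state, (b) invoke Jensen's inequality on the conditional distribution of $\TP$ given the server is inactive, and (c) apply the Fuhrmann-Cooper decomposition to translate the resulting bound on $\expect{\TP_I}$ into a bound on $\expect{\TP}$.

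Because $\DP(\cdot)\equiv 1$, every service completion triggers a de-activation, so the long-run rate of de-activations equals the throughput, namely $\lambda$ by stability. Under Scenario~\ref{scen2}, activations occur at rate $f(i)$ whenever the chain is in state $(i,0)$; ergodicity therefore gives that the long-run activation rate equals the time-average $\expect{\mathbf{1}_{\{\sigma=0\}} f(\TP)} = (1-\rho)\expect{f(\TP)\mid \sigma=0}$. Equating the two rates yields
\[
\expect{f(\TP)\mid \sigma=0} = \frac{\lambda}{1-\rho}.
\]

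I would then apply Jensen's inequality to the conditional distribution of $\TP$ given $\sigma=0$. In case~(i), $f$ concave gives $\expect{f(\TP)\mid\sigma=0}\leq f(\expect{\TP\mid\sigma=0})$, and strict monotonicity together with unboundedness make $f^{-1}$ well defined on $[0,\infty)$, so $\expect{\TP\mid\sigma=0} \geq f^{-1}(\lambda/(1-\rho))$. In case~(ii), convexity reverses the inequality; moreover strict monotonicity plus convexity force $f$ to be unbounded (the right-derivative is non-decreasing and strictly positive), so $f^{-1}(\lambda/(1-\rho))$ is again well defined and $\expect{\TP\mid\sigma=0} \leq f^{-1}(\lambda/(1-\rho))$.

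Finally, I would identify $\expect{\TP\mid \sigma=0}$ with $\expect{\TP_I}$ and invoke the Fuhrmann-Cooper decomposition~\eqref{fuco1} in the form $\expect{\TP} = \lambda^2\expect{B^2}/(2(1-\rho))+\rho+\expect{\TP_I}$ to convert the bounds on $\expect{\TP_I}$ into the claimed bounds~\eqref{boundfconc} and~\eqref{boundfconv}. The main technical obstacle is justifying the rate-balance argument cleanly for general service-time distributions: for exponential service times it is immediate from the balance equations of the continuous-time Markov chain written down in Section~\ref{exactana}, but in general one must appeal to a Palm-calculus or level-crossings argument in the spirit of~\eqref{actisdeactA} in the proof of Theorem~\ref{boundg}.
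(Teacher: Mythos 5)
Your proposal is correct and follows essentially the same route as the paper: the rate-conservation identity $\rho/\expect{B}=\expect{f(\TP_I)}(1-\rho)$ (your conditional expectation $\expect{f(\TP)\mid\sigma=0}$ is exactly $\expect{f(\TP_I)}$), Jensen's inequality in the two convexity directions, and the Fuhrmann--Cooper decomposition to pass from $\expect{\TP_I}$ to $\expect{\TP}$. The paper simply asserts the activation/de-activation balance without the Palm/level-crossing caveat you raise, and your observation that convexity plus strict monotonicity already forces unboundedness of $f$ in case (ii) is a small but accurate addition.
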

\begin{proof}
In steady state, the mean number of activations per unit of time equals the mean number of de-activations per unit of time, i.e.,
\begin{equation}
\label{actisdeactB}
\pr{\sigma=1}\frac{1}{\expect{B}}=\expect{f(\TP_I)}\pr{\sigma=0},
\end{equation}
where $\TP_I$ denotes the number of customers during a non-serving (vacation) period.

If $f(\cdot)$ is concave, it follows by Jensen's inequality that
\[
\expect{f(\TP_{I})} \leq f\left(\expect{\TP_{I}}\right).
\]
Since $f(\cdot)$ is increasing, we thus get, as $\pr{\sigma=1}=\rho$ and $\pr{\sigma=0}=1-\rho$,
\[
\expect{\TP_{I}} \geq f^{- 1}\left(\frac{\lambda}{1 - \rho}\right).
\]
The Fuhrmann-Cooper decomposition~\eqref{fuco1} implies
\[
\expect{\TP} = \frac{\lambda^2 \expect{B^2}}{2(1 - \rho)} + \rho + \expect{\TP_{I}},
\]
yielding~\eqref{boundfconc}.

The bound in equation~\eqref{boundfconv} follows by symmetry.
\end{proof}

Note that $f(i)=\nu i$ is both convex and concave. We thus find an exact result for this activation function,
\[
\expect{\TP} = \frac{2\lambda +\nu \lambda^2\expect{B^2}}{2\nu(1-\rho)}+\rho.
\]
This also follows from the generating function, which we derived in Theorem~\ref{nonhomlin}.

\begin{figure}[t]
  \centering
  \includegraphics[width=5in,keepaspectratio]{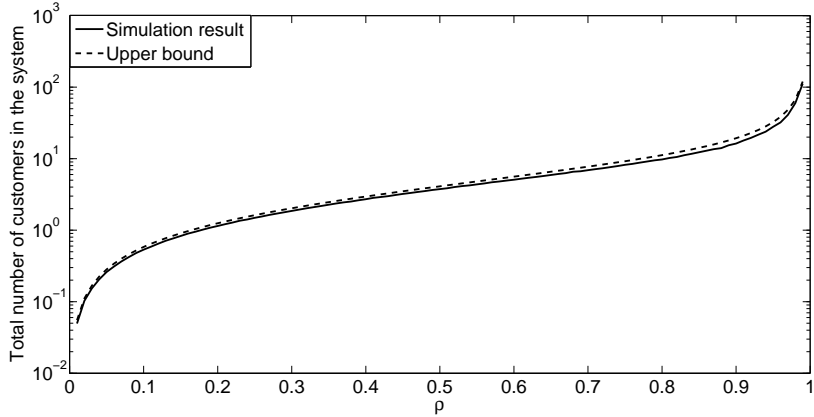}
  \caption{Average number of customers in the system for $f(i)=1.25^i-1$ and $\rho\in[0,1)$.}
  \label{b125act}
\end{figure}
\begin{figure}[t]
  \centering
  \includegraphics[width=5in,keepaspectratio]{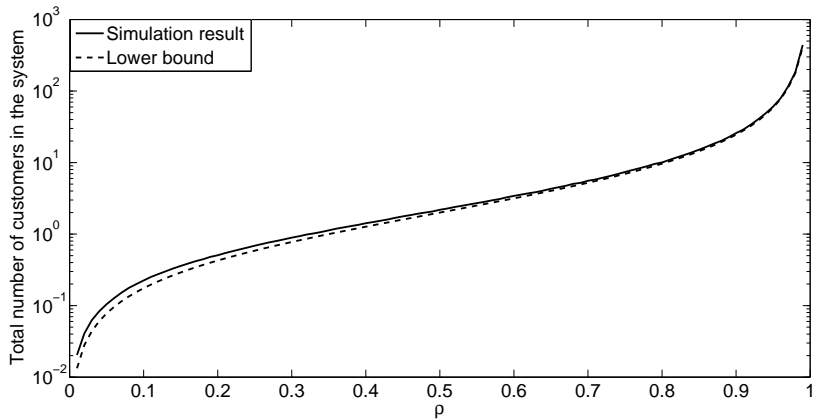}
  \caption{Average number of customers in the system for $f(i)=i^{0.8}$ and $\rho\in[0,1)$.}
  \label{fi08act}
\end{figure}
In order to investigate how tight the bounds in Theorem~\ref{boundf} are for activation functions other than $f(i) = \nu i$, we performed several numerical experiments. Two typical results are displayed in Figures~\ref{b125act} and~\ref{fi08act}, showing for exponentially distributed service times with mean one the average number of customers in the system for $f(i)=1.25^i-1$ and $f(i)=i^{0.8}$, respectively. For these activation functions we see that the simulated results are relatively close to their corresponding bounds for all values of $\rho$. In Section~\ref{HTresults} we will prove that the bounds in Theorem~\ref{boundf} are in fact asymptotically sharp in heavy traffic.

Next we compare the two processes $\{\TP(t),\sigma(t)\}_{t\geq 0}$ with activation rate $f(\cdot)$ and $\{\hat{\TP}(t),\hat{\sigma}(t)\}_{t\geq 0}$ with activation rate $\hat{f}(\cdot)$.

\begin{lemma}
\label{fcoup}
For {\rm Scenario~\ref{scen2}}, and assuming that $\hat{f}(i) \leq f(i)$, $\DP(i)=1$, $i\geq 0$, $\hat{\TP}(0)=\TP(0)$ and $\hat{\sigma}(0)=\sigma(0)=0$, $\{\hat{\TP}(t)\}_{t\geq 0} \geq_{{\rm st}} \{\TP(t)\}_{t\geq 0}$.
\end{lemma}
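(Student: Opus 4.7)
My plan is to adapt the coupling construction of Lemma~\ref{gcoup}. I would build a joint process $(\TP^*(t),\sigma^*(t),\hat{\TP}^*(t),\hat{\sigma}^*(t))_{t\ge 0}$ on a single probability space whose two marginals obey the prescribed dynamics and which pathwise satisfies $\hat{\TP}^*(t)\ge \TP^*(t)$. Three ingredients drive the coupling: a shared rate-$\lambda$ Poisson process of arrivals, so every arrival increments both counters simultaneously; a Poisson point process $\mathcal{N}$ on $[0,\infty)\times[0,\infty)$ of unit intensity that generates activations, where the $\TP$-system (during a vacation) activates at the first epoch of $\mathcal{N}$ with second coordinate $u\le f(\TP^*(t))$ and the $\hat{\TP}$-system activates at $u\le\hat{f}(\hat{\TP}^*(t))$; and a service-time assignment that gives both systems the same sample whenever they activate from a common epoch of $\mathcal{N}$, and independent samples otherwise. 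Because $\hat{f}\le f$, as soon as $\hat{\TP}^*=\TP^*=i$ and both servers are inactive, every $\hat{\TP}$-activation from $\mathcal{N}$ is automatically a $\TP$-activation from the same epoch.

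I would then show that this coupling preserves the strengthened invariant
\[
\hat{\TP}^*(t)\ge \TP^*(t),\qquad \hat{\TP}^*(t)=\TP^*(t)\ \Rightarrow\ \hat{\sigma}^*(t)\le\sigma^*(t),
\]
for all $t\ge 0$. The second clause is essential, because the first clause alone would be broken by a service completion in the $\hat{\TP}$-system while $\hat{\TP}^*=\TP^*$ and $\sigma^*=0$. Checking the invariant event by event: an arrival moves both counters in lock-step; a unilateral $\TP$-activation (at the excess rate $f(i)-\hat{f}(i)$ in the tied state, or at any time when $\hat{\TP}^*>\TP^*$) switches $\sigma^*$ from $0$ to $1$ and can only improve the second clause; a coincident activation starts synchronous services with a shared service sample, so the ensuing service completes simultaneously in both systems and the tied state is preserved; a unilateral $\hat{\TP}$-activation can only occur when $\hat{\TP}^*>\TP^*$, and its subsequent completion can bring $\hat{\TP}^*$ down by at most one, so $\hat{\TP}^*\ge \TP^*$ is preserved and (if the tied state is re-entered) it is entered with $\hat{\sigma}^*=0$; a unilateral $\TP$-service completion only decreases $\TP^*$, which never hurts either clause.

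Granted the invariant, the conclusion $\{\hat{\TP}(t)\}_{t\ge 0}\stl\{\TP(t)\}_{t\ge 0}$ follows immediately. I expect the main obstacle to be the formal case analysis certifying the invariant: one must enumerate, for each configuration of $(\hat{\TP}^*-\TP^*,\sigma^*,\hat{\sigma}^*)$ and each admissible event type, the next state of the coupled process and confirm that no transition violates either clause. The interaction between the shared-versus-independent service samples and the re-entry dynamics into the tied state is the delicate point, since a tied state reached after a unilateral $\hat{\TP}$-completion has $\hat{\sigma}^*=0$ and can subsequently see coupled activations again. In the spirit of the appendix proof of Lemma~\ref{gcoup}, I would relegate this bookkeeping to Appendix~\ref{appendix}.
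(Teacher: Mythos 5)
Your overall strategy---an explicit monotone coupling with shared arrivals, activation clocks coupled through $\hat{f}\le f$, and shared service times---is the same as the paper's, but your treatment of the service times has a genuine gap. The problem is the rule ``same service sample only when the two systems activate from a common epoch of $\mathcal{N}$, independent samples otherwise,'' combined with the claim that ``a unilateral $\hat{\TP}$-activation can only occur when $\hat{\TP}^*>\TP^*$.'' That claim is false: starting from a tied vacation state $\hat{\TP}^*=\TP^*=i$, an epoch of $\mathcal{N}$ with mark $u\in(\hat{f}(i),f(i)]$ activates only the $\TP$-system, putting you in the tied state with $\sigma^*=1$, $\hat{\sigma}^*=0$ (a state your second clause explicitly allows, and which is also re-entered whenever a unilateral $\hat{\TP}$-service completion closes a gap of one while the $\TP$-system is mid-service). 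From that state the $\hat{\TP}$-system still activates at rate $\hat{f}(i)$ during its vacation---you cannot suppress this without distorting its marginal law---and under your rule it then receives an \emph{independent} service sample. If that sample is shorter than the residual $\TP$-service, the $\hat{\TP}$-system completes first and $\hat{\TP}^*$ drops to $\TP^*-1$, violating the invariant. So the invariant you propose is not preserved by the coupling you construct.

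The paper's proof closes exactly this hole in two ways. First, the service samples are indexed by each system's own count of service completions ($B_{N_B^*+1}$ and $B_{\hat{N}_B^*+1}$); since in a tied state the two counts coincide (the gap equals $N_B^*-\hat{N}_B^*$), both systems draw the \emph{same} service realization for their current or next service even when they begin serving at different times, so the $\TP$-system, having started earlier, necessarily finishes no later. Second, the invariant is strengthened with a third clause: in the tied state with $\hat{\sigma}^*=\sigma^*$ one also requires $\hat{R}(t)\ge R(t)$ for the residual clocks (the activation clocks are coupled as $V/f(i)$ versus $V/\hat{f}(i)$ with a common $V$, which preserves this ordering since $\hat{f}\le f$). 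If you adopt count-indexed service samples and add the residual-time clause to your invariant, your case analysis goes through along the lines of the appendix proof of Lemma~\ref{gcoup}; as written, it does not.
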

\begin{proof}
The proof of this lemma proceeds along similar lines as the proof of Lemma~\ref{gcoup}, i.e.~it is based on a coupling $(\{\TP^{*}(t),\sigma^{*}(t)\}_{t\geq 0},\{\hat{\TP}^{*}(t),\hat{\sigma}^{*}(t)\}_{t\geq 0})$ between $\{\TP(t),\sigma(t)\}_{t\geq 0}$ and $\{\hat{\TP}(t),\hat{\sigma}(t)\}_{t\geq 0}$. We construct the sample path of the coupled systems recursively such that, marginally, this sample path obeys the same probabilistic laws as the original process. Further we make sure that arrivals in both systems happen at the same time and that the $n$-th service takes the same amount of time in both systems. Finally we make sure that the system with activation rate $f(\cdot)$ always activates if the system with activation rate $\hat{f}(\cdot)$ activates, if the total amount of customers in both systems is equal and both systems are de-activated. This will make sure that $\hat{\TP}^{*}(t)\geq \TP^{*}(t)$ for all $t\geq 0$ as both systems always de-activate after one customer is served. A formal proof is given in Appendix~\ref{appendix}.
\end{proof}

Combining Lemma~\ref{fcoup} and Theorem~\ref{boundf} leads to an upper bound for the mean number of customers in a system with activation rate $\hat{f}(\cdot)$ if there exists a strictly increasing convex function $f(\cdot)$ such that $f(i)\leq \hat{f}(i)$ for all $i$. We get
\[
\expect{\hat{\TP}} \leq \expect{\TP} \leq \frac{\lambda^2 \expect{B^2}}{2(1 - \rho)} + \rho + f^{- 1}\Big(\frac{\lambda}{1 - \rho}\Big).
\]
Similarly we find a lower bound for the mean number of customers in a system if there exists a strictly increasing unbounded concave function $f(\cdot)$ such that $f(i)\geq \hat{f}(i)$ for all $i$.

\section{Heavy-traffic results}
\label{HTresults}
In this section we study the heavy-traffic behavior of the system. In particular, we derive the stationary distribution of the scaled number of customers in the system in heavy traffic, $L/\expect{L}$ for $\rho \uparrow 1$. More precisely, we let $\lambda$ vary and study the system when $\lambda$ approaches $1/\expect{B}$.

As an important byproduct, we obtain the limiting distribution of the stationary scaled sojourn time as $\rho \uparrow 1$ as well. For this we consider the Laplace-Stieltjes transform of $S/\expect{S}$, $\expect{\e^{-wS/\expect{S}}}$, with $w\geq 0$. By virtue of the distributional form of Little's law~\cite{KS88}, the PASTA property and a level crossings argument we know that
\[
G_{\TP}(r)=\expect{\e^{-\lambda(1-r)S}},
\]
or equivalently, for $\expect{L} \geq w$,
\[
\expect{\e^{-wS/\expect{S}}} = G_{\TP}(1-\frac{1}{\expect{L}}w) = \expect{(1-\frac{1}{\expect{L}}w)^{\expect{L} \frac{L}{\expect{L}}}}.
\]
Noting that $\expect{L} \rightarrow \infty$ as $\rho \uparrow 1$ and using a generalized version of the continuous mapping theorem, see e.g.~\cite{Kal97}, we then find as $\rho \uparrow 1$
\[
S/\expect{S} \cond W \mbox{ {\rm if and only if} }  L/\expect{L} \cond W.
\]
Here $W$ denotes some non-negative random variable and $\cond$ denotes convergence in distribution.

Note that $\TP$ and $X_i$ in general depend on the value of $\rho$, which is not fixed in this section. To emphasize this we will therefore write $G_{\TP}(r,\rho)$ for the generating function of $\TP$ and $G_{X_i}(r,\rho)$ for the generating function of $X_i$ in this section. Similarly we write $K(r,\rho)$ and $Y(r,\rho)$ for $K(\cdot)$ and $Y(\cdot)$ as defined in~\eqref{defK} and~\eqref{defY}. Further, in order to analyze the system in heavy traffic, we need to make some technical assumptions on the vacation distribution in heavy traffic. That is, in this section we will consider Scenario~\ref{scen3} as described below and Scenario~\ref{scen2} with functions $f(\cdot)$ that grow monotonically to infinity.
\begin{scenario}
\label{scen3}
$X_i\eqd X \stsl 0$ for all $i\geq 1$ and either $X_0 \eqd X$ and $\pr{X=0}=0$ or $X_0 \stsl X$ and $\pr{X=0}>0$. Further, $\expect{X_i^{*}} = \lim_{\rho \uparrow 1} \expect{X_i}$ exists and is finite for all $i\geq 0$, and
\begin{equation}
\label{HTassumpt}
\lim_{\rho \uparrow 1} \Big( \frac{\partial}{\partial \rho} G_{X_i}(r,\rho)\Big|_{r=\e^{-(1-\rho)u}} \Big)=0,
\end{equation}
for all $i\geq 0$ and $u\geq 0$.
\end{scenario}
The additional assumptions in Scenario~\ref{scen3} ensure that the vacation discipline behaves nicely when $\lambda$ approaches $1/\expect{B}$, i.e.~when $\rho \uparrow 1$ the vacation distribution for $\rho$ is similar to the vacation distribution for $\rho-\epsilon$ for small $\epsilon$, which is a desirable property from both a practical and theoretical perspective.

One example of a vacation discipline that belongs to Scenario~\ref{scen3} is the prototypical example of the back-off mechanism used in wireless networks described in Example~\ref{vactime}. For $i\geq 1$ we get
\[
\frac{\partial}{\partial \rho} G_{X_i}(r,\rho)= \frac{1-r}{\expect{B}}\tilde{V}'((1-r)\rho/\expect{B}),
\]
and thus~\eqref{HTassumpt} holds because $\expect{V}<\infty$. For $i=0$ it can be checked in a similar way that~\eqref{HTassumpt} holds.

Denote by ${\rm Exp}(\beta)$ a random variable having an exponential distribution with mean $1/\beta$, and denote by ${\rm \Gamma}(\alpha,\beta)$ a random variable having a gamma distribution with shape parameter $\alpha$ and rate parameter $\beta$. Define $R_B=\expect{B^2}/(2\expect{B})$, the mean residual service time, and $\upsilon_B = R_B/\expect{B}$.

\begin{theorem}
\label{eqaiHT}
For {\rm Scenario~\ref{scen3}} and $\DP(i)=a^i$ with $0\leq a <1$, $i\geq 0$,
\begin{equation}
\label{eqaiHTeq}
(1-\rho) L \cond {\rm Exp}(\upsilon_B^{-1}) \mbox{ {\rm as} } \rho\uparrow 1.
\end{equation}
\end{theorem}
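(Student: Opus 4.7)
My plan is to analyze the exact generating function of Theorem~\ref{eqai} in the heavy-traffic limit. Since the Laplace-Stieltjes transform of $\mathrm{Exp}(\upsilon_B^{-1})$ is $1/(1+\upsilon_B u)$, the continuity theorem for Laplace transforms reduces the claim to establishing
\[
G_L(e^{-(1-\rho) u}, \rho) \;\longrightarrow\; \frac{1}{1 + \upsilon_B u}, \qquad \rho \uparrow 1, \; u \geq 0,
\]
where I use $\expect{e^{-u(1-\rho) L}} = G_L(e^{-(1-\rho) u}, \rho)$. Since $1/(1+\upsilon_B u)$ is precisely the heavy-traffic Laplace transform of the ordinary M/G/1 queue without vacations, the theorem effectively says that the vacations do not contribute to the leading-order behavior of $L$.

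The central computation is a Taylor expansion of $Y(r, \rho)$ near $r = 1$. Setting $r = e^{-(1-\rho)u}$, so that $1 - r = (1-\rho) u (1 + o(1))$, and combining
\[
\tilde{B}(\lambda(1-r)) - r = (1-r)(1-\rho) + \tfrac{\lambda^2 \expect{B^2}}{2}(1-r)^2 + O((1-r)^3)
\]
with $1 - G_X(r, \rho) = \expect{X} (1-r)(1 + o(1))$, which is valid under Scenario~\ref{scen3} by the assumption~\eqref{HTassumpt} and $\expect{X_i^{*}} < \infty$, and using $\lambda^2 \expect{B^2}/2 \to \upsilon_B$ as $\rho \uparrow 1$, yields
\[
\frac{Y(e^{-(1-\rho)u}, \rho)}{Y(1, \rho)} \;\longrightarrow\; \frac{1}{1 + \upsilon_B u}.
\]
The identical expansion shows $K(e^{-(1-\rho)u}, \rho)/K(1, \rho) \to 1/(1+\upsilon_B u)$ in case~(ii).

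In case~(i), where $G_L(r, \rho) = \prod_{i=0}^{\infty} Y(a^i r, \rho)/Y(a^i, \rho)$, only the $i=0$ factor is singular, and it contributes the target $1/(1+\upsilon_B u)$. For $i \geq 1$ the arguments $a^i e^{-(1-\rho)u}$ and $a^i$ both lie in $[0, a]$, a compact interval bounded away from $1$ on which $Y$ is smooth, with derivative uniformly bounded and values uniformly bounded away from zero for $\rho$ close to $1$; a mean-value estimate then yields $|Y(a^i r, \rho)/Y(a^i, \rho) - 1| \leq C a^i (1-\rho) u$, so the tail product converges to $1$. In case~(ii) the formula is a ratio of sums; multiplying both numerator and denominator by $(1-\rho)$, every term contains exactly one singular factor, either $K(r)$ when $j=0$ or the $Y(r)$ inside $\prod_{i=0}^{j-1} Y(a^i r)$ when $j \geq 1$, each of which carries the common extra multiplier $1/(1+\upsilon_B u)$ relative to its denominator counterpart with $r=1$. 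This multiplier factors out of the numerator sum, giving the same limit.

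The main obstacle is justifying termwise passage to the limit inside the infinite product and the two infinite sums. For the product in case~(i), the geometric bound above suffices. For case~(ii), absolute summability uniform in $\rho$ is required; this follows from the convergence estimates already used in Lemma~\ref{convproofs}, combined with the observation that $Y(0, \rho) = 1 - G_X(0, \rho) \leq 1 - \varepsilon$ uniformly for $\rho$ close to $1$ (since $\pr{X = 0} > 0$ in case~(ii)), which forces $\prod_{i=1}^{j-1} Y(a^i, \rho)$ to decay geometrically in $j$ uniformly in $\rho$.
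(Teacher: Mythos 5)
Your proposal is correct and follows essentially the same route as the paper's proof: evaluate the exact generating function from Theorem~\ref{eqai} at $r=\e^{-(1-\rho)u}$, show that the single singular factor $Y(r,\rho)/Y(1,\rho)$ (resp.\ $K(r,\rho)/K(1,\rho)$) tends to $1/(1+\upsilon_B u)$, and show that all remaining factors, whose arguments stay in $[0,a]$, contribute a limit of $1$. The only differences are cosmetic --- you use a Taylor expansion where the paper applies l'H\^opital's rule twice (both hinging on assumption~\eqref{HTassumpt}), and you are somewhat more explicit than the paper about justifying the interchange of the limit with the infinite product and sums.
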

\begin{proof}
Consider the Laplace-Stieltjes transform of $(1-\rho) \TP$, with $u\geq 0$, and note that
\begin{equation}
\label{LStoger}
\expect{\e^{-(1-\rho)u \TP}} = G_{\TP}(\e^{-(1-\rho)u},\rho).
\end{equation}
We will now use Theorem~\ref{eqai} to prove~\eqref{eqaiHTeq}. For this define $h(\rho)=\e^{-(1-\rho)u}$ and note that then,
\begin{equation}
\label{YroverY1}
\frac{Y(h(\rho),\rho)}{Y(1,\rho)} = \frac{(1-\rho) \tilde{B}(\rho(1-h(\rho))/\expect{B})(1-G_X(h(\rho),\rho))}{\expect{X}(\tilde{B}(\rho(1-h(\rho))/\expect{B}) - h(\rho)}.
\end{equation}
Applying l'H\^opital's rule twice,
\begin{equation}
\label{YroverYhop}
\lim_{\rho \uparrow 1} \frac{Y(h(\rho),\rho)}{Y(1,\rho)}=\frac{1}{1+\upsilon_B h'(1)}\Big(1 + \lim_{\rho \uparrow 1} \Big( \frac{1}{h'(\rho) \expect{X}}\frac{\partial}{\partial \rho} G_{X}(r,\rho)\Big|_{r=h(\rho)}\Big) \Big).
\end{equation}
By continuity of $Y(\cdot)$,
\[
\lim_{\rho \uparrow 1} Y(a^i h(\rho),\rho) = Y(a^i,1),
\]
for $i\geq 1$. From Theorem~\ref{eqai} we then find for $X_0 \eqd X$ that
\[
\lim_{\rho \uparrow 1} G_{\TP}(h(\rho),\rho)=\frac{1}{1+\upsilon_B h'(1)} = \frac{1}{1+ \upsilon_B u},
\]
which, by L\'evy's continuity theorem, gives~\eqref{eqaiHTeq} in case $X_0 \eqd X$.

We also find
\begin{align*}
&\lim_{\rho \uparrow 1} \Big(\frac{G_X(h(\rho),\rho)-G_{X_0}(h(\rho),\rho)}{1-G_X(h(\rho),\rho)}\Big) \\
&=\lim_{\rho \uparrow 1} \Big( \frac{\frac{\partial}{\partial \rho} G_{X}(r,\rho)+h'(\rho) \expect{X} - \frac{\partial}{\partial \rho} G_{X_0}(r,\rho) - h'(\rho) \expect{X_0}}{-\frac{\partial}{\partial \rho} G_{X}(r,\rho) -h'(\rho) \expect{X}} \Big|_{r= h(\rho)} \Big)\\
&=\frac{\expect{X_0}-\expect{X}}{\expect{X}} =\frac{K(1,\rho)}{Y(1,\rho)},
\end{align*}
and, for $i \geq 1$,
\[
\lim_{\rho \uparrow 1} \frac{G_X(a^i h(\rho),\rho)-G_{X_0}(a^i h(\rho),\rho)}{1-G_X(a^i h(\rho),\rho)} = \frac{G_X(a^i,1)-G_{X_0}(a^i,1)}{1-G_X(a^i,1)}.
\]
Further, from Theorem~\ref{eqai} we find, for $X_0 >_{{\rm st}} X$,
\[
G_{\TP}(h(\rho),\rho)=\frac{Y(h(\rho),\rho)}{Y(1,\rho)} \frac{ \sum_{j=0}^{\infty} \frac{G_{X}(a^j h(\rho),\rho ) - G_{X_0}(a^j h(\rho),\rho )}{1-G_{X}(a^j h(\rho),\rho)} \prod_{i=1}^j  Y(a^i h(\rho),\rho ) }{\sum_{j=0}^{\infty} \frac{G_{X}(a^j,\rho) - G_{X_0}(a^j,\rho)}{1-G_{X}(a^j,\rho)} \prod_{i=1}^j  Y(a^i,\rho)},
\]
as
\[
K(r,\rho)=Y(r,\rho) \frac{G_X(r,\rho)-G_{X_0}(r,\rho)}{1-G_X(r,\rho)}.
\]
We thus obtain
\[
\lim_{\rho \uparrow 1} G_{\TP}(h(\rho),\rho)=\frac{1}{1+ h'(1) \upsilon_B}= \frac{1}{1+ u \upsilon_B},
\]
which proves~\eqref{eqaiHTeq}.
\end{proof}
It is striking that the result in Theorem~\ref{eqaiHT} is independent of the precise assumption on when the server returns from a vacation. In fact, the behavior is similar to the heavy-traffic behavior of a standard M/G/1 queue without vacations~\cite{King62}.

Remember that in this paper we assume $\expect{B^2}<\infty$. This assumption is needed in the proof of Theorem~\ref{eqaiHT}, but not in the proof of Theorem~\ref{eqai}. If the service time distribution has a tail behavior like $t^{-k}$ with $1< k \leq 2$, i.e.~the service time has finite mean and infinite variance, we can prove along similar lines as the proof of Theorem~\ref{eqaiHT} that then the heavy-traffic behavior is similar to that of a standard M/G/1 queue without vacations as well~\cite{BC99}.

%
%

If the server de-activates less frequently than in Theorem~\ref{eqaiHT}, then one would expect the same result as in Theorem~\ref{eqaiHT}. The next theorem proves this result for the vacation discipline of Example~\ref{vactime}. Furthermore, we will prove a similar result for vacation disciplines in Scenario~\ref{scen2} with an aggressive activation function~$f(\cdot)$.
\begin{theorem}
For the vacation discipline described in {\rm Example~\ref{vactime}} and $\DP(i) \leq a^i$ with $a \in [0,1)$, $i\geq 0$, and for {\rm Scenario~\ref{scen2}} with $\DP(i)=1$, $i\geq 0$, and $f(\cdot)$ a strictly increasing continuous and convex function with $\lim_{i\rightarrow \infty} i^{-1}f^{-1}(i)=0$,
\begin{equation}
\label{LMG1HT}
(1-\rho) L \cond {\rm Exp}(\upsilon_B^{-1}) \mbox{ {\rm as} } \rho\uparrow 1.
\end{equation}
\end{theorem}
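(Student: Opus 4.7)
The plan is to bracket $(1-\rho)L$ between two random variables that both converge in distribution to ${\rm Exp}(\upsilon_B^{-1})$. In both cases the stochastic lower bound is the same: the Fuhrmann-Cooper decomposition~\eqref{fuco1} yields $L \eqd L_{{\rm M/G/1}} + L_I$ with $L_I \geq 0$ independent of the M/G/1 queue length, so $L \stl L_{{\rm M/G/1}}$, and Kingman's classical M/G/1 heavy-traffic theorem gives $(1-\rho)L_{{\rm M/G/1}} \cond {\rm Exp}(\upsilon_B^{-1})$, with limiting mean $\upsilon_B$.

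For the first case ($\DP(i)\leq a^i$ under the vacation discipline of Example~\ref{vactime}) a matching stochastic upper bound is available. Lemma~\ref{gcoup} applied with $\hat{\DP}(i)=a^i \geq \DP(i)$ gives $L \leq_{{\rm st}} L^{(a)}$, where $L^{(a)}$ is the queue length in the auxiliary system obtained by replacing $\DP(\cdot)$ by $a^i$ while retaining the vacation discipline of Example~\ref{vactime}. Since that example is checked in the excerpt to satisfy the hypotheses of Scenario~\ref{scen3}, Theorem~\ref{eqaiHT} yields $(1-\rho) L^{(a)} \cond {\rm Exp}(\upsilon_B^{-1})$, and the sandwich closes this case immediately.

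For the second case (Scenario~\ref{scen2}, $\DP(i)=1$, $f$ strictly increasing, continuous and convex with $i^{-1}f^{-1}(i)\to 0$), no similarly tight stochastic upper bound seems directly available: comparison with a linear $f(i)=\nu i$ via Lemma~\ref{fcoup} produces the gamma heavy-traffic limit of Corollary~\ref{nonhomlinexp}, not an exponential. Instead, use the moment bound~\eqref{boundfconv}:
\[
\expect{L} \leq \frac{\lambda^2 \expect{B^2}}{2(1-\rho)} + \rho + f^{-1}\Big(\frac{\lambda}{1-\rho}\Big).
\]
Multiplying by $(1-\rho)$, the growth hypothesis $\lim_{i\to\infty}i^{-1}f^{-1}(i)=0$ forces $(1-\rho)f^{-1}(\lambda/(1-\rho))\to 0$, so $\limsup_{\rho\uparrow 1}(1-\rho)\expect{L}\leq \upsilon_B$. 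Combined with $(1-\rho)\expect{L}\geq (1-\rho)\expect{L_{{\rm M/G/1}}}\to \upsilon_B$, this yields $(1-\rho)\expect{L}\to \upsilon_B$.

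To upgrade the mean convergence plus the one-sided stochastic bound $L \stl L_{{\rm M/G/1}}$ into the claimed distributional convergence, argue as follows. Uniformly bounded first moments imply that $\{(1-\rho)L\}_{\rho<1}$ is tight. For any weak subsequential limit $W$, a Skorokhod representation plus Fatou gives $\expect{W}\leq \upsilon_B$, while passing the stochastic ordering to the limit yields $W \stl {\rm Exp}(\upsilon_B^{-1})$ and hence $\expect{W}\geq \upsilon_B$. Equality of means combined with stochastic dominance forces $W \eqd {\rm Exp}(\upsilon_B^{-1})$ (since $\int_0^\infty [\pr{W>t}-\pr{{\rm Exp}(\upsilon_B^{-1})>t}]\,dt = 0$ with nonnegative integrand). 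As every subsequential limit is this exponential, the full convergence follows. This last step is the genuine obstacle: case~1 is a clean squeeze, whereas case~2 requires the equality-of-means-under-stochastic-dominance trick to compensate for the absence of a sharp stochastic upper bound.
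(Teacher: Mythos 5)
Your proposal is correct, and case~1 is exactly the paper's argument: the sandwich $\TP_{{\rm M/G/1}} \leq_{{\rm st}} \TP \leq_{{\rm st}} \TP_{a^i}$ obtained from Lemma~\ref{gcoup}, combined with Theorem~\ref{eqaiHT} and Kingman's limit for the two outer terms. For case~2 you use the same quantitative inputs as the paper --- the mean bound~\eqref{boundfconv}, the hypothesis $\lim_{i\to\infty} i^{-1}f^{-1}(i)=0$, and the stochastic lower bound $\TP \stl \TP_{{\rm M/G/1}}$ --- but you close the argument differently. The paper invokes the Fuhrmann--Cooper decomposition~\eqref{fuco1} once more to write $(1-\rho)\TP \eqd (1-\rho)\TP_{{\rm M/G/1}} + W$ with $W=(1-\rho)\TP_I \geq 0$; since $\expect{W} = (1-\rho)\expect{\TP} - (1-\rho)\expect{\TP_{{\rm M/G/1}}} \to 0$, $W$ converges to $0$ in probability and Slutsky's theorem finishes the proof. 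You instead run a compactness argument: tightness from the bounded means, a subsequential limit $W$ with $\expect{W}\leq \upsilon_B$ by Fatou, $W \stl {\rm Exp}(\upsilon_B^{-1})$ by passing the ordering to the limit, and the observation that stochastic dominance together with equality of means forces $W \eqd {\rm Exp}(\upsilon_B^{-1})$. Both are sound. The paper's route is shorter because the decomposition hands it an explicit nonnegative representation of the difference, turning $L^1$-convergence to zero directly into convergence in probability; your route never needs the independence structure of~\eqref{fuco1}, only the one-sided stochastic comparison and the convergence of means, so it is marginally more self-contained at the cost of the subsequential-limit bookkeeping. Your remark that the coupling of Lemma~\ref{fcoup} against a linear rate would only deliver the (non-matching) gamma upper bound correctly identifies why the clean squeeze of case~1 is unavailable in case~2.
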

\begin{proof}
First assume the vacation discipline of Example~\ref{vactime} is used with $\DP(i) \leq a^i$, $a \in [0,1)$. By Lemma~\ref{gcoup} we have $\TP \leq_{{\rm st}} \TP_{a^i}$, where $\TP_{a^i}$ denotes a random variable with the steady-state distribution of the number of customers in the system with $\DP(i)=a^i$ for all $i$. Further, $\TP \geq_{{\rm st}} \TP_{{\rm M/G/1}}$. The result now follows from Theorem~\ref{eqaiHT}.

Now assume Scenario~\ref{scen2} with $\DP(i)=1$, $i\geq 0$, and $f(\cdot)$ a strictly increasing continuous and convex function with $\lim_{i\rightarrow \infty} i^{-1}f^{-1}(i)=0$. From~\eqref{boundfconv} we get, because $\lim_{i\rightarrow \infty} i^{-1}f^{-1}(i)=0$,
\begin{equation}
\label{LMG1HTeqp}
\lim_{\rho \uparrow 1} (1 - \rho) \expect{\TP} \leq \upsilon_B.
\end{equation}
Now consider the random variable $W=(1-\rho)(\TP - \TP_{{\rm M/G/1}})$ and note that $W$ is nonnegative because $\TP \geq_{{\rm st}} \TP_{{\rm M/G/1}}$ and $\rho < 1$. Therefore,
\[
\expect{|W|} = \expect{W} = \expect{(1-\rho) \TP} - \expect{(1-\rho) \TP_{{\rm M/G/1}}}.
\]
Thus as,
\[
\lim_{\rho \uparrow 1} (1 - \rho) \expect{\TP_{{\rm M/G/1}}} = \upsilon_B,
\]
we find from~\eqref{LMG1HTeqp} that $\expect{|W|}=0$, hence $W$ converges in mean to $0$. Using Slutsky's theorem we then get
\[
(1-\rho) \TP = W + (1-\rho) \TP_{{\rm M/G/1}} \cond {\rm Exp}(\upsilon_B^{-1}) \mbox{ {\rm as} } \rho\uparrow 1,
\]
which completes the proof.
\end{proof}

Next we consider vacation disciplines that are less aggressive. First we will consider Scenario~\ref{scen3} with $\DP(\cdot)$ inversely proportional to the queue length and Scenario~\ref{scen2} with a linear activation rate~$f(\cdot)$. For these vacation scenarios we will show that the heavy-traffic behavior does depend on the vacation scenario.

\begin{theorem}
\label{eq1overHT}
For {\rm Scenario~\ref{scen3}} and $\DP(i)=1/(i+1)$, $i\geq 0$,
\begin{equation}
\label{eq1overHTeq1}
(1-\rho) L \cond {\rm \Gamma}(1+\expect{X^{*}}\upsilon_B^{-1},\upsilon_B^{-1}) \mbox{ {\rm as} } \rho\uparrow 1.
\end{equation}
Similarly, for {\rm Scenario~\ref{scen2}} with $\DP(i)=1$ and $f(i)=\nu i$, $i\geq 0$,
\begin{equation}
\label{eq1overHTeq2}
(1-\rho) L \cond {\rm \Gamma}(1+1/(\nu R_B),\upsilon_B^{-1}) \mbox{ {\rm as} } \rho\uparrow 1.
\end{equation}
\end{theorem}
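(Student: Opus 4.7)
The plan is to evaluate the exact generating functions from Theorems~\ref{eq1over} and~\ref{nonhomlin} at $r = h(\rho):=\e^{-(1-\rho)u}$ and match the limits as $\rho\uparrow 1$ with the Laplace-Stieltjes transforms $(1+u\upsilon_B)^{-(1+\expect{X^*}/\upsilon_B)}$ and $(1+u\upsilon_B)^{-(1+1/(\nu R_B))}$ of the two target gamma distributions. Since $\expect{\TP}\to\infty$ as $\rho\uparrow 1$, L\'evy's continuity theorem together with~\eqref{LStoger} then promotes convergence of the transforms to the claimed convergence in distribution.

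For the first part, consider first the sub-case $X_0\eqd X$, where Theorem~\ref{eq1over} gives $G_{\TP}(r,\rho) = (1-\rho)Y(r,\rho)\e^{-\alpha(r,\rho)}/(r\,\expect{X})$. The prefactor at $r=h(\rho)$ factorises as $h(\rho)^{-1}\cdot(1-\rho)Y(1,\rho)/\expect{X}\cdot Y(h(\rho),\rho)/Y(1,\rho)$; the middle factor equals $1$ identically since $Y(1,\rho)=\expect{X}/(1-\rho)$, and the last factor converges to $(1+u\upsilon_B)^{-1}$ by the l'H\^opital computation already carried out in the proof of Theorem~\ref{eqaiHT}. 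For the exponential, I would change variables $x=\e^{-(1-\rho)v}$ in $\alpha(h(\rho))=\int_{h(\rho)}^1 Y(x,\rho)/x\,dx$; this conveniently cancels the $1/x$ factor and yields $\alpha(h(\rho)) = \int_0^u (1-\rho)Y(\e^{-(1-\rho)v},\rho)\,dv$. Applying the same ratio limit with $v$ in place of $u$ identifies the pointwise integrand limit as $\expect{X^*}/(1+v\upsilon_B)$, so $\alpha(h(\rho))\to (\expect{X^*}/\upsilon_B)\log(1+u\upsilon_B)$ and $\e^{-\alpha(h(\rho))}\to(1+u\upsilon_B)^{-\expect{X^*}/\upsilon_B}$. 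Multiplying the two limits produces the target transform. The sub-case $X_0\stsl X$ needs the richer formula of Theorem~\ref{eq1over}(ii); I would divide numerator and denominator by $Y(1,\rho)$ and apply the same change of variables inside $\int_0^{h(\rho)}K(y)\e^{\alpha(y)}dy$. Writing $K(r,\rho) = Y(r,\rho)(G_X(r,\rho)-G_{X_0}(r,\rho))/(1-G_X(r,\rho))$ shows that the fractional factor stays bounded under Scenario~\ref{scen3}, so both numerator and denominator are driven by the same $Y$-integral and produce the same limit as above.

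For the second part, Theorem~\ref{nonhomlin} factorises $G_{\TP}(r,\rho)$ as $G_{\TP_{{\rm M/G/1}}}(r,\rho)$ times an exponential integral; the M/G/1 factor evaluated at $h(\rho)$ tends to the classical $(1+u\upsilon_B)^{-1}$. For the exponential I would again substitute $x=\e^{-(1-\rho)v}$ and use the Taylor expansion $\tilde{B}(s) = 1-s\expect{B}+s^2\expect{B^2}/2+o(s^2)$ to verify that $\tilde{B}(\lambda(1-x))-x$ is of order $(1-\rho)^2 v(1+v\upsilon_B)$, while $\lambda(1-x)$ is of order $(1-\rho)v/\expect{B}$ and $dx$ contributes one more factor of $(1-\rho)$. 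The integrand therefore converges pointwise to $-1/(\nu\expect{B}(1+v\upsilon_B))$, and integrating over $v\in[0,u]$ yields $-\log(1+u\upsilon_B)/(\nu R_B)$ upon using $\upsilon_B\expect{B}=R_B$. Hence the exponential factor tends to $(1+u\upsilon_B)^{-1/(\nu R_B)}$ and multiplying with the M/G/1 limit delivers the asserted transform.

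The main obstacle in both parts is justifying the interchange of the limit $\rho\uparrow 1$ with the integration. I would attack this via dominated convergence: monotonicity of $r\mapsto Y(r,\rho)$, the uniform quadratic Taylor bound on $\tilde{B}$ near the origin, and the heavy-traffic regularity condition~\eqref{HTassumpt} built into Scenario~\ref{scen3} should together supply $\rho$-uniform integrable envelopes on $[0,u]$ for $\rho$ sufficiently close to $1$.
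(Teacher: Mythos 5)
Your proposal is correct and follows essentially the same route as the paper: evaluating the exact generating functions of Theorems~\ref{eq1over} and~\ref{nonhomlin} at $r=\e^{-(1-\rho)u}$, reusing the ratio limit $Y(h(\rho),\rho)/Y(1,\rho)\to(1+u\upsilon_B)^{-1}$ from the proof of Theorem~\ref{eqaiHT}, showing the exponential factors tend to $(1+u\upsilon_B)^{-\expect{X^{*}}/\upsilon_B}$ and $(1+u\upsilon_B)^{-1/(\nu R_B)}$, and concluding via L\'evy's continuity theorem. The only cosmetic difference is your substitution $x=\e^{-(1-\rho)v}$ (reducing the integral to the fixed domain $[0,u]$ and invoking dominated convergence) where the paper substitutes $s=1-x$ and Taylor-expands inside the integral; both yield the same logarithmic limits, and your treatment of the $X_0\stsl X$ sub-case matches the paper's argument that the $K(h(\rho))$ term is negligible and the ratio of the $K\e^{\alpha}$-integrals tends to one.
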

\begin{proof}
The proof for Scenario~\ref{scen3} proceeds along similar lines as the proof of Theorem~\ref{eqaiHT}. We consider the Laplace-Stieltjes transform of $(1-\rho) \TP$, $\expect{\e^{-(1-\rho)u \TP}}$, with $u\geq 0$, and use~\eqref{LStoger} and Theorem~\ref{eq1over} to prove~\eqref{eq1overHTeq1}. For this define $h(\rho)=\e^{-(1-\rho)u}$ and note that,
\begin{align*}
\lim_{\rho \uparrow 1} \int_{h(\rho)}^1 \frac{Y(x,\rho)}{x}dx &= \lim_{\rho \uparrow 1} \int_{h(\rho)}^1 \frac{\tilde{B}(\lambda(1-x))(1-G_{X}(x,\rho))}{x(\tilde{B}(\lambda(1-x))-x)}dx\\
&= \lim_{\rho \uparrow 1} \int_{0}^{1-h(\rho)} \frac{\tilde{B}(\lambda s)(1-G_{X}(1-s,\rho))}{(1-s)(\tilde{B}(\lambda s)-1+s)}ds.
\end{align*}
Using Taylor expansion and noting that $s = O(1-\rho)$ as $\rho \uparrow 1$ in the integration domain,
\begin{align*}
 \int_{h(\rho)}^1  \frac{Y(x,\rho)}{x}dx &= \frac{\expect{X^{*}}}{1-\rho} \int_0^{1-h(\rho)} \Big( \frac{1}{1+\frac{\rho^2 \upsilon_B s}{1-\rho}} + O(1-\rho) \Big) ds\\
&= \frac{\expect{X^{*}}}{1-\rho} \Big(\frac{1-\rho}{\rho^2 \upsilon_B} \log \Big(1 + \frac{\rho^2\upsilon_B(1-h(\rho))}{1-\rho}\Big) + O((1-\rho)^2) \Big).
\end{align*}
Thus,
\[
\lim_{\rho \uparrow 1} {\rm exp} \Big(-\int_{h(\rho)}^1  \frac{Y(x,\rho)}{x}dx\Big) = \Big(1 + u \upsilon_B \Big)^{-\expect{X^{*}}/\upsilon_B}.
\]
We now find for $X_0 \eqd X$, using $\eqref{YroverYhop}$ and Theorem~\ref{eq1over}, that
\begin{equation}
\label{HT1overXisX0}
\lim_{\rho \uparrow 1} G_{\TP}(h(\rho),\rho)=\Big(\frac{1}{1+  u \upsilon_B}\Big)^{1+\expect{X^{*}}/\upsilon_B},
\end{equation}
which, by L\'evy's continuity theorem, gives~\eqref{eq1overHTeq1} in case $X_0 \eqd X$.

For $X_0 \stsl X$, using~\eqref{defKalt} and~\eqref{eq1overeq2L0},
\begin{equation}
\label{pr02ndorder}
\pr{L=0} K(h(\rho),\rho) = \frac{G_{\TP_{{\rm M/G/1}}}(h(\rho),\rho) \Big( \expect{X_0} G_{X_0^{\rm {res}}}(h(\rho),\rho) - \expect{X} G_{X^{\rm {res}}}(h(\rho),\rho) \Big)}{\expect{X_0} - \expect{X} + \frac{\expect{X}}{1-\rho} \int_0^1 C(x,\rho) dx},
\end{equation}
with
\[
C(x,\rho)= G_{\TP_{{\rm M/G/1}}}(x) \Big( \expect{X_0} G_{X_0^{\rm {res}}}(x,\rho) - \expect{X} G_{X^{\rm {res}}}(x,\rho) \Big) \e^{\alpha(x,\rho)} .
\]
Thus $\lim_{\rho\uparrow 1} \pr{L=0} K(h(\rho),\rho) = 0$, as $C(x,\rho)>0$.

Similarly,
\[
\lim_{\rho \uparrow 1} \frac{1}{1-\rho} \frac{ \int_{0}^{h(\rho)} C(x,\rho) dx} {\expect{X_0} - \expect{X} + \frac{\expect{X}}{1-\rho} \int_0^1 C(x,\rho) dx} = \frac{1}{\expect{X}} .
\]
Hence by Theorem~\ref{eq1over} and~\eqref{HT1overXisX0} we find for $X_0 \stsl X$ that
\[
\lim_{\rho \uparrow 1} G_{\TP}(h(\rho),\rho)=\Big(\frac{1}{1+  u \upsilon_B}\Big)^{1+\upsilon_B\expect{X^{*}}}
\]
as well, which proves~\eqref{eq1overHTeq1}.

For Scenario~\ref{scen2} with $\DP(i)=1$ and $f(i)=\nu i$, $i\geq 0$, note that
\[
\int_{h(\rho)}^1 \frac{-\lambda(1-x)}{\nu (\tilde{B}(\lambda(1-x))-x)} dx = \frac{-\lambda}{\nu} \int_{0}^{1-h(\rho)} \frac{s}{\tilde{B}(\lambda s) - 1 + s}ds.
\]
Using Taylor expansion gives
\begin{align*}
&\int_{h(\rho)}^1 \frac{-\lambda(1-x)}{\nu (\tilde{B}(\lambda(1-x))-x)} dx = \frac{-\lambda}{\nu} \int_{0}^{1-h(\rho)} \Big( \frac{s}{-\rho s + \rho^2 \upsilon_B s^2+s} + O(1) \Big)  ds\\
&= \frac{-\rho}{\nu(1-\rho)\expect{B}} \int_{0}^{1-h(\rho)} \Big( \frac{1}{1+\frac{\rho^2 \upsilon_B s}{1-\rho}} + O(1-\rho) \Big)  ds\\
&= \frac{-\rho}{\nu(1-\rho)\expect{B}} \Big( \frac{1-\rho}{\rho^2 \upsilon_B} \log \Big( 1 + \frac{\rho^2 \upsilon_B (1-h(\rho))}{1-\rho} \Big) + O((1-\rho)^2) \Big).
\end{align*}
Thus,
\[
\lim_{\rho \uparrow 1} \int_{h(\rho)}^1 {\rm exp} \Big( \frac{-\lambda(1-x)}{\nu (\tilde{B}(\lambda(1-x))-x)} dx \Big) = \Big(1 + u \upsilon_B\Big)^{-1/(\nu R_B)},
\]
which, using Theorem~\ref{nonhomlin} and L\'evy's continuity theorem, gives~\eqref{eq1overHTeq2}.
\end{proof}
Instead of using the result in Theorem~\ref{eqaiHT} one could also use the differential equations~\eqref{difeq1over} and~\eqref{difeq1over2} to prove Theorem~\ref{eq1overHT} directly.

We thus see that the heavy-traffic behavior of $L$ does depend on the specific parameters if the vacation disciplines of Theorem~\ref{eq1overHT} are used. We further see that the number of customers still scales like $1-\rho$ in heavy traffic, i.e.~$(1-\rho) L$ converges to a random variable. This is not the case anymore for vacation disciplines that are even less aggressive as the next theorem states.
\begin{theorem}
\label{degenerate}
For the vacation discipline described in {\rm Example~\ref{vactime}} and $\DP(i) = 1/(i+1)^\alpha$ with $\alpha \in (0,1)$, $i\geq 0$, and for {\rm Scenario~\ref{scen2}} with $\DP(i)=1$ and $f(i)=\nu i^\alpha$ with $\alpha \in (0,1)$, $i\geq 0$,
\begin{equation}
\label{LoverELto1}
\frac{L}{\expect{L}} \cond 1 \mbox{ as } \rho\uparrow 1.
\end{equation}
In particular,
\[
\lim_{\rho \uparrow 1} (1-\rho)^{1/\alpha} \expect{L} = \expect{X^{*}}^{1/\alpha}
\]
for the vacation discipline described in {\rm Example~\ref{vactime}}, and
\[
\lim_{\rho \uparrow 1} (1-\rho)^{1/\alpha}  \expect{\TP} = \expect{B}^{-1/\alpha} \nu^{-1/\alpha}
\]
for {\rm Scenario~\ref{scen2}}.
\end{theorem}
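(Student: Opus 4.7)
The plan is to combine the moment identities that come from the global balance between activations and de-activations with the stochastic-comparison tools of Section~\ref{secbounds} to get matching lower and upper bounds on $\expect{\TP}$, and then to upgrade the first-order rate to the degenerate in-distribution limit $\TP/\expect{\TP}\cond 1$.

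First I would produce the lower bound on $\expect{\TP}$. For the Example~\ref{vactime} discipline with $\DP(i)=(i+1)^{-\alpha}$, the function $\DP$ is strictly decreasing and convex on $\mathbb{N}_0$, so Theorem~\ref{boundg} gives $\expect{\TP}\ge\DP^{-1}((1-\rho)/\expect{X})=(\expect{X}/(1-\rho))^{1/\alpha}-1$. For Scenario~\ref{scen2} with $f(i)=\nu i^\alpha$ the function $f$ is strictly increasing, concave and unbounded, so Theorem~\ref{boundf}(i) gives $\expect{\TP}\ge f^{-1}(\lambda/(1-\rho))+O(1/(1-\rho))=(\lambda/(\nu(1-\rho)))^{1/\alpha}+O(1/(1-\rho))$. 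Since $1/\alpha>1$, the $(1-\rho)^{-1/\alpha}$-terms dominate, so $\liminf_{\rho\uparrow1}(1-\rho)^{1/\alpha}\expect{\TP}$ is already at least the constant predicted by the theorem in each case.

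Next I would prove the concentration statement $\TP/m_\rho\xrightarrow{P}1$, where $m_\rho$ denotes the conjectured leading-order scale. The starting point is the balance identity already used in Theorems~\ref{boundg}-\ref{boundf}: \eqref{actisdeactA} gives $\expect{\DP(\TP)}=(1-\rho)/\expect{X}$, i.e.\ $\expect{(\TP+1)^{-\alpha}}\sim(1-\rho)/\expect{X^{*}}$, and \eqref{actisdeactB} gives $\nu\expect{\TP_I^\alpha}=\lambda/(1-\rho)$. The lower tail $\pr{\TP\le(1-\epsilon)m_\rho}$ is handled by the Markov-type inequality $\DP((1-\epsilon)m_\rho)\pr{\TP\le(1-\epsilon)m_\rho}\le\expect{\DP(\TP)}$, which already gives $\pr{\TP\le(1-\epsilon)m_\rho}\le(1-\epsilon)^\alpha$; to push this to a bound tending to $0$ and to obtain the matching upper-tail bound $\pr{\TP\ge(1+\epsilon)m_\rho}\to0$, I would compute a second-order balance identity by applying the stationary equation for the embedded chain $(Z_n)_{n\in\mathbb{N}_0}$ (see~\eqref{eq1}) to the quadratic test functions $\DP^2$ and $f^2$. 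For sublinear $\DP$ and $f$ this yields $\mathrm{Var}(\DP(\TP))=o(\expect{\DP(\TP)}^2)$ and $\mathrm{Var}(f(\TP_I))=o(\expect{f(\TP_I)}^2)$, so Chebyshev's inequality delivers $\DP(\TP)/\expect{\DP(\TP)}\xrightarrow{P}1$ and $f(\TP_I)/\expect{f(\TP_I)}\xrightarrow{P}1$. Since $\DP$ and $f$ are strictly monotone and regularly varying with index $-\alpha$ respectively $\alpha$, the continuous mapping theorem transfers the concentration to $\TP/m_\rho\xrightarrow{P}1$ via the Fuhrmann-Cooper decomposition~\eqref{fuco1} (the M/G/1 summand is only $O_P(1/(1-\rho))=o(m_\rho)$ because $1/\alpha>1$).

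Finally I would use the in-probability limit together with the Step-1 lower bound and a uniform-integrability argument to conclude $\expect{\TP}/m_\rho\to1$. Uniform integrability follows from a stochastic upper bound on $\TP$: in Scenario~\ref{scen1} we couple with a system having $\hat\DP(i)\equiv\epsilon$ for $i\ge i_0$ with $\epsilon<(1-\rho)/\expect{X^{*}}$ via Lemma~\ref{gcoup}, while in Scenario~\ref{scen2} we truncate $f$ to a bounded activation rate at the level $\nu m_\rho^\alpha$ via Lemma~\ref{fcoup}; both comparison systems have a mean of order $m_\rho$. This then yields both stated limits for $(1-\rho)^{1/\alpha}\expect{\TP}$ and the degenerate convergence $\TP/\expect{\TP}\cond1$.

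The main obstacle is the concentration step. The first-order balance identity alone produces only the stationary bound $\pr{\TP\le(1-\epsilon)m_\rho}\le(1-\epsilon)^\alpha$ and no non-trivial upper-tail bound, so a genuine second-moment calculation (or, alternatively, a Lyapunov-drift argument exploiting the sharply restoring mean drift of the embedded chain near $m_\rho$) is required to upgrade tightness to concentration. For Scenario~\ref{scen2} there is the further subtlety that Theorem~\ref{boundf} provides no upper bound on $\expect{\TP}$ in the concave regime, so the upper bound on $\expect{\TP}$ cannot be read off directly from the bounds section and must instead be obtained through the concentration-plus-uniform-integrability route outlined above.
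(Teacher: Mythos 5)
Your Step 1 (the lower bounds via Theorems~\ref{boundg} and~\ref{boundf}) and your choice of comparison systems for the upper bound coincide with the paper's, but the concentration step --- which you yourself flag as the main obstacle --- is a genuine gap, and the route you sketch for it does not work. You propose to obtain $\mathrm{Var}(\DP(\TP)) = o(\expect{\DP(\TP)}^2)$ from a ``second-order balance identity'' with test functions $\DP^2$ and $f^2$. The first-moment identity~\eqref{actisdeactA} is not a generic moment-generating device: it comes from equating the rates of activations and de-activations, and the de-activation rate happens to be \emph{linear} in $\DP$; there is no physical quantity whose rate is $\DP(\TP)^2$, and plugging $h=\DP^2$ into the stationarity relation $\expect{h(Z_{n+1})}=\expect{h(Z_n)}$ produces an expression involving $\expect{\DP(Z-1+A_n+\cdots)^2}$ that does not isolate $\expect{\DP(Z)^2}$. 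Worse, the target itself is essentially equivalent to the lower-tail control you are trying to prove: since $\DP$ is bounded by $1$ and $\expect{\DP(\TP)}\sim(1-\rho)/\expect{X^{*}}\to 0$, one has
\begin{equation*}
\mathrm{Var}(\DP(\TP)) \;\geq\; \tfrac{1}{2}\,\DP(M)^2\,\pr{\TP\leq M}
\end{equation*}
for any fixed $M$ (for $\rho$ close enough to $1$), so $\mathrm{Var}(\DP(\TP))=o(\expect{\DP(\TP)}^2)$ already \emph{presupposes} $\pr{\TP\leq M}=o((1-\rho)^2)$ for every fixed $M$ --- a strong lower-tail estimate that no first- or second-moment bookkeeping supplies. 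The paper's own closing remark confirms that a Chebyshev route exists only under the extra assumptions $\expect{B^3}<\infty$, $\expect{X_i^3}<\infty$, and even then it is Chebyshev on $\TP$ itself with $\expect{\TP^2}$ computed from the comparison systems, not on $\DP(\TP)$.

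What the paper actually does to close this gap is Proposition~\ref{prop1}, a converse-to-Jensen concentration result: if $g$ is strictly concave (resp.\ convex) in the quantified sense $\kappa_{a,b}>0$ (resp.\ $\chi_{a,b}<0$) and $\expect{g(W)}/g(\expect{W})\to 1$, then $W/\expect{W}\cond 1$. Its input is purely first-order, namely $\expect{\DP(\TP)}/\DP(\expect{\TP})\to 1$, which is obtained by combining the balance identity $\expect{\DP(\TP)}=(1-\rho-\pr{\TP=0})/\expect{X}$ with the \emph{sharp} two-sided mean asymptotics $\lim_{\rho\uparrow 1}(1-\rho)^{1/\alpha}\expect{X}^{-1/\alpha}\expect{\TP}=1$. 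This also exposes an ordering problem in your plan: you defer the mean upper bound to a uniform-integrability step \emph{after} concentration and claim only that the comparison system has mean ``of order $m_\rho$,'' whereas both the exact constants $\expect{X^{*}}^{1/\alpha}$ and $\expect{B}^{-1/\alpha}\nu^{-1/\alpha}$ in the theorem and the hypothesis of Proposition~\ref{prop1} require the upper bound with the correct constant up to a factor $(1-\delta)^{-1/\alpha}$, which the paper extracts by computing the generating function~\eqref{gerfuncfixedalpha} of the two-level comparison system explicitly. To repair your argument you would either have to prove the strong lower- and upper-tail estimates needed for your variance bound (effectively re-proving concentration), or adopt the paper's converse-Jensen mechanism.
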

We thus see for the vacation discipline described in Example~\ref{vactime} with the vacation probability inversely proportional to the queue length raised to the power $\alpha$, $\alpha \in (0,1)$, and for Scenario~\ref{scen2} with a linear activation rate raised to the power $\alpha$, $\alpha \in (0,1)$, that $(1-\rho) L$ diverges. In fact, we see that for these vacation disciplines the number of customers in the system scales like $(1-\rho)^{1/\alpha}$ in heavy traffic. We further see that, using this appropriate heavy-traffic scaling, the scaled number of users in the system in heavy traffic has a degenerate distribution.

In order to prove Theorem~\ref{degenerate} we first introduce some additional notation. For any function $g(\cdot):[0,\infty)\mapsto [0,\infty)$ define, for $a < 1$, $b > 1$ and $x \in [0, \infty)$,
\[
\gamma_{a, b}(x) =
\frac{(b - 1) g(a x) + (1 - a) g(b x)}{(b - a) g(x)}.
\]
Further define
\[
\kappa_{a, b} = 1 - \sup_x \gamma_{a, b}(x),
\]
and
\[
\chi_{a, b} = 1 - \inf_x \gamma_{a, b}(x).
\]
The proof of Theorem~\ref{degenerate} is based on the following proposition.
\begin{proposition}
\label{prop1}
Assume $g(\cdot)$ is concave and $\kappa_{a, b} >0$ for any $a < 1$ and $b > 1$, or $g(\cdot)$ is convex and $\chi_{a, b} < 0$ for any $a < 1$ and $b > 1$. If
\[
\lim_{\rho \uparrow 1} \frac{\expect{g(W)}}{g(\expect{W})} = 1
\]
then
\[
\frac{W}{\expect{W}} \cond 1 \mbox{ as } \rho \uparrow 1.
\]
\end{proposition}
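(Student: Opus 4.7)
I prove the concave case; the convex case is handled by an entirely symmetric argument with $\chi$ replacing $\kappa$ and the Jensen-type inequalities reversed. The plan is proof by contradiction.

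Suppose $W/\mu$ does not converge in distribution to $1$ as $\rho\uparrow 1$, where $\mu = \expect{W}$. Then there exist $\delta, \epsilon \in (0,1)$ and a subsequence $\rho_n\uparrow 1$ along which $\pr{|W/\mu - 1| > \delta}\geq\epsilon$. Set $a=1-\delta$ and $b=1+\delta$; after passing to a further subsequence, I may assume without loss of generality that $p := \pr{W \leq a\mu}\geq\epsilon/2$ throughout (the case $\pr{W\geq b\mu}\geq\epsilon/2$ is symmetric). Let $m_- = \expect{W \mid W \leq a\mu}\leq a\mu$ and $m_+ = \expect{W \mid W > a\mu}$; the identity $\mu = pm_- + (1-p)m_+$ forces $m_+ \geq \mu(1-ap)/(1-p) > \mu$. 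Writing $a^* = m_-/\mu \in [0,a]$ and $b^* = m_+/\mu > 1$, Jensen's inequality applied within each of the two bins gives
\[
\expect{g(W)} \leq p\,g(a^*\mu) + (1-p)\,g(b^*\mu).
\]
Mean preservation $pa^* + (1-p)b^* = 1$ forces $p = (b^*-1)/(b^*-a^*)$, so the right-hand side is exactly $\gamma_{a^*,b^*}(\mu)\,g(\mu)$. The definition of $\kappa_{a^*,b^*}$ then yields
\[
\expect{g(W)}/g(\mu) \leq \gamma_{a^*,b^*}(\mu) \leq 1 - \kappa_{a^*,b^*}.
\]

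The main obstacle is to show that $\kappa_{a^*,b^*}$ is bounded below by a strictly positive constant along the subsequence. The parameter $a^*$ sits in the compact interval $[0,a]$, and using $p\geq\epsilon/2$ one checks $b^* \geq (1-a\epsilon/2)/(1-\epsilon/2) > 1$, so $b^*$ is bounded away from $1$; however, $b^*$ can blow up when $p$ approaches $1$. I would extract a further subsequence along which $(a^*,b^*)\to(a^*_\infty,b^*_\infty)\in[0,a]\times(1,\infty]$ and split cases. If $b^*_\infty<\infty$, continuity of $\kappa$ in its arguments on compact subsets (which is inherited from the continuity of $g$, automatic for concave $g\geq 0$), together with the pointwise hypothesis $\kappa_{a^*_\infty,b^*_\infty}>0$, gives $\liminf\kappa_{a^*,b^*}>0$. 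If $b^*_\infty=\infty$, the hypothesis $\kappa_{a,b}>0$ for every $b>1$ precludes $g$ from being eventually linear: if $g$ were affine on some $[c,\infty)$, a direct computation shows $\gamma_{a,b}(x)\equiv 1$ on $\{x: ax\geq c\}$, forcing $\kappa_{a,b}\leq 0$. Hence the secant slope from $(a^*_\infty\mu, g(a^*_\infty\mu))$ to $(\mu, g(\mu))$ strictly exceeds the asymptotic slope $\lim_{x\to\infty}g(x)/x$, and a direct calculation, handling the regime $\mu\to\infty$ via the scaled functions $\tilde{g}_\mu(y)=g(\mu y)/g(\mu)$ if necessary, shows that $\gamma_{a^*,b^*}(\mu)$ is uniformly bounded above by some $1-c$ with $c>0$. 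In either case the bound $\expect{g(W)}/g(\mu)\leq 1-c$ along the subsequence contradicts the hypothesis that this ratio tends to $1$, which completes the proof.
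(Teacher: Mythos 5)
Your two-bin decomposition is a genuinely different, and in fact more elementary, route than the paper's: the paper works with the quantile function $F^{-1}$, splits $[0,1]$ into three pieces $[0,\epsilon_1]$, $[\epsilon_1,1-\epsilon_2]$, $[1-\epsilon_2,1]$ chosen so that the middle piece has conditional mean exactly $\expect{W}$ (Lemmas \ref{lem3} and \ref{lem4}), and then chases the resulting $\kappa$ through a chain of comparisons. Your conditioning on $\{W\leq a\mu\}$ versus $\{W>a\mu\}$, combined with the observation that mean preservation forces the mixing weight to be exactly $(b^*-1)/(b^*-a^*)$ so that the two-point Jensen bound is literally $\gamma_{a^*,b^*}(\mu)\,g(\mu)$, is correct and shortcuts most of that machinery. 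The steps up to and including $\expect{g(W)}/g(\mu)\leq 1-\kappa_{a^*,b^*}$, and the bounds $a^*\leq 1-\delta$ and $b^*\geq 1+\epsilon\delta/2$, are all right.

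The gap is in the last step, where you need $\kappa_{a^*,b^*}$ bounded away from zero along the subsequence. Your compactness argument relies on continuity of $(a,b)\mapsto\kappa_{a,b}$, but $\kappa_{a,b}=1-\sup_x\gamma_{a,b}(x)$ is a supremum over the non-compact domain $x\in[0,\infty)$ of functions continuous in $(a,b)$; such a supremum is only lower semicontinuous in $(a,b)$, so $\kappa$ is only \emph{upper} semicontinuous, which gives $\limsup\kappa_{a^*,b^*}\leq\kappa_{a^*_\infty,b^*_\infty}$ --- the wrong direction for a positive lower bound. The case $b^*_\infty=\infty$ is worse: your secant-slope computation controls $\lim_{b\to\infty}\gamma_{a,b}(x)$ for each fixed $x$, but what you need is a bound uniform in $x$ (and $\mu$ varies with $\rho$, typically diverging), and ``$g$ not eventually affine'' does not by itself deliver that uniformity. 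Fortunately the fix is cheap and makes your proof shorter, not longer: concavity implies the monotonicity $\kappa_{a',b'}\geq\kappa_{a,b}$ whenever $a'\leq a<1$ and $b'\geq b>1$ (this is the paper's Corollary \ref{kappainc}, a one-line consequence of the spreading inequality for concave functions). Applied with $a'=a^*\leq 1-\delta$ and $b'=b^*\geq 1+\epsilon\delta/2$ it gives $\kappa_{a^*,b^*}\geq\kappa_{1-\delta,\,1+\epsilon\delta/2}>0$ directly from the hypothesis, with no subsequence extraction, no continuity, and no separate treatment of $b^*\to\infty$. With that substitution your argument closes and is cleaner than the paper's.
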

The proof of Proposition~\ref{prop1} is deferred to Appendix~\ref{appendix} as it relies on a few technical lemmas that are relegated from the main text.

Having established Proposition~\ref{prop1}, we can now prove Theorem~\ref{degenerate}.
\begin{proof}
(of Theorem~\ref{degenerate})
For the vacation discipline described in Example~\ref{vactime} and $\DP(i) = 1/(i+1)^\alpha$ with $\alpha \in (0,1)$, $i\geq 0$, we know from Theorem~\ref{boundg} that
\[
\expect{\TP} \geq \DP^{-1}\Big(\frac{1-\rho}{\expect{X}}\Big),
\]
or, as $\DP^{-1}(i)=i^{-1/\alpha} - 1$,
\begin{equation}
\label{lboundEL}
\lim_{\rho \uparrow 1} (1-\rho)^{1/\alpha} \expect{X}^{-1/\alpha} \expect{\TP} \geq 1.
\end{equation}

Now consider the system with $\hat{\DP}(i)=1$ for $i \leq \lceil \DP^{-1}(\beta) \rceil$ and $\hat{\DP}(i)=\beta$ for $i > \lceil \DP^{-1}(\beta) \rceil$, where $\beta>0$ and, for stability, $\beta<(1-\rho)/\expect{X}$. Thus, by construction, $\hat{\DP}(i) \geq \DP(i)$ for all $i$. Further assume that this system uses a vacation discipline similar to that of  Example~\ref{vactime}, but with a slight modification; the server only activates if at least $\lceil \DP^{-1}(\beta) \rceil + 1$ customers are present in the system, instead of at least $1$. That is, we have $\lceil \DP^{-1}(\beta) \rceil$ permanent customers. It follows immediately from Lemma~\ref{gcoup} that $\hat{\TP} \geq_{{\rm st}} \TP$.
Further, using~\eqref{geng} we find
\begin{equation}
\label{gerfuncfixedalpha}
G_{\hat{\TP}}(r) = \frac{\pr{\hat{L}=\lceil \DP^{-1}(\beta) \rceil} \tilde{B}(\lambda(1-r))(G_{X}(r)-G_{X_0}(r))}{\tilde{B}(\lambda(1-r))-r-(1-G_{X}(r))\beta} r^{\lceil \DP^{-1}(\beta) \rceil},
\end{equation}
with
\[
\pr{\hat{\TP}=\lceil \DP^{-1}(\beta) \rceil}=\frac{(1-\rho-\beta \lambda \expect{V})(1-\tilde{V}(\lambda))}{\lambda \expect{V} \tilde{V}(\lambda)}.
\]
Now take $\beta=\frac{(1-\rho)(1-\delta)}{\expect{X}}$, $\delta>0$, and note that from~\eqref{gerfuncfixedalpha}
\[
\expect{\hat{\TP}}=\lceil \DP^{-1}(\frac{(1-\rho)(1-\delta)}{\expect{X}}) \rceil + C(\rho),
\]
with $\lim_{\rho \uparrow 1} C(\rho) (1-\rho) < \infty$. Using that $\DP^{-1}(i)=i^{-1/\alpha}-1$, we get
\[
(1-\rho)^{1/\alpha} \expect{X}^{-1/\alpha}\expect{\hat{\TP}} \leq  (1-\delta)^{-1/\alpha} + (1-\rho)^{1/\alpha} \expect{X}^{-1/\alpha} C(\rho),
\]
and, hence, as $0 < \alpha < 1$,
\[
\lim_{\rho \uparrow 1} (1-\rho)^{1/\alpha} \expect{X}^{-1/\alpha} \expect{\hat{\TP}} \leq \lim_{\rho \uparrow 1} (1-\delta)^{-1/\alpha} + (1-\rho)^{1/\alpha} \expect{X}^{-1/\alpha} C(\rho) \leq (1-\delta)^{-1/\alpha},
\]
for any $\delta >0$. Thus, as $\hat{\TP} \geq_{{\rm st}} \TP$, we find
\[
\lim_{\rho \uparrow 1} (1-\rho)^{1/\alpha} \expect{X}^{-1/\alpha} \expect{\TP} \leq 1.
\]
Therefore, using equation~\eqref{lboundEL},
\begin{equation}
\label{meaneq1}
\lim_{\rho \uparrow 1} (1-\rho)^{1/\alpha} \expect{X}^{-1/\alpha} \expect{\TP} = 1,
\end{equation}
or
\begin{equation}
\label{meaneq12}
\lim_{\rho \uparrow 1} \frac{\DP(\expect{\TP})}{1-\rho} = \frac{1}{\expect{X}}.
\end{equation}
From~\eqref{actisdeactA} we find
\[
\expect{\DP(\TP)}=\frac{1}{\expect{X}} (1-\rho-\pr{\TP=0}),
\]
and hence, because $\pr{\TP=0}/(1-\rho) \to 0$ as $\rho \uparrow 1$ by using Lemma~\ref{gcoup} and an argument similar to~\eqref{pr02ndorder},
\[
\lim_{\rho \uparrow 1} \frac{\expect{\DP(\TP)}}{1-\rho} = \frac{1}{\expect{X}}.
\]
Combining this with~\eqref{meaneq12} we find
\[
\lim_{\rho \uparrow 1} \frac{\expect{\DP(\TP)}}{\DP(\expect{\TP})} = 1.
\]
Further, because $\DP(\cdot)$ is strictly convex,
\begin{align*}
\gamma_{a, b}(x) &=\frac{(b - 1) (1+ax)^{-\alpha} + (1 - a) (1+bx)^{-\alpha}}{(b - a) (1+x)^{-\alpha}} \\
&=\frac{b-1}{b-a} \Big(\frac{1+ax}{1+x}\Big)^{-\alpha} + \frac{1-a}{b-a}\Big(\frac{1+bx}{1+x}\Big)^{-\alpha} \\
&> \Big(\frac{b-1}{b-a} \frac{1+ax}{1+x} + \frac{1-a}{b-a} \frac{1+bx}{1+x}\Big)^{-\alpha}=1,
\end{align*}
and the statement for the vacation discipline described in Example~\ref{vactime} follows from Proposition~\ref{prop1}.

The proof for Scenario~\ref{scen2} with $\DP(i)=1$ and $f(\cdot)=\nu i^\alpha$ with $\alpha \in (0,1)$, $i\geq 0$, proceeds along similar lines. First, using Theorem~\ref{boundf} we know
\[
\expect{\TP} \geq \frac{\lambda^2 \expect{B^2}}{2(1 - \rho)} + \rho + f^{- 1}\Big(\frac{\lambda}{1 - \rho}\Big),
\]
or, as $f^{-1}(i)=(i/\nu)^{1/\alpha}$,
\begin{equation}
\label{lboundELf}
\lim_{\rho \uparrow 1} (1-\rho)^{1/\alpha} \expect{B}^{1/\alpha} \nu^{1/\alpha} \expect{\TP} \geq 1.
\end{equation}

Now consider the system with $\hat{f}(i)=0$ for $i \leq \lceil f^{-1}(\beta) \rceil$ and $\hat{f}(i)=\beta$ for $i > \lceil f^{-1}(\beta) \rceil$. where $\beta>0$ and, for stability, $\beta>\frac{\rho}{(1-\rho)\expect{B}}$. Thus, by construction, $\hat{f}(i) \leq f(i)$ for all $i\geq 0$. Further take $\hat{\DP}(i)=1$ for all $i \geq 0$. We know from Lemma~\ref{fcoup} that $\hat{\TP} \geq_{{\rm st}} \TP$.

We can find the generating function of $\hat{\TP}$ using~\eqref{geng}. We can also find this generating function by noting that this system behaves as an M/G/1 queue with $\lceil f^{-1}(g) \rceil$ permanent customers and service requirement $B+{\rm Exp}(\beta)$, i.e.~the time required to serve a customer is the sum of the vacation time and the service time.

Now take $\beta=\frac{\rho}{(1-\rho)(1-\delta)\expect{B}}$, $\delta>0$, which gives
\[
\expect{\hat{\TP}}=\lceil f^{-1}(\frac{\rho}{(1-\rho)(1-\delta)\expect{B}}) \rceil + C(\rho),
\]
with $\lim_{\rho \uparrow 1} C(\rho) (1-\rho) < \infty$. We then find in a similar way as before that
\[
\lim_{\rho \uparrow 1} (1-\rho)^{1/\alpha} \expect{B}^{1/\alpha} \nu^{1/\alpha} \expect{\TP} \leq 1,
\]
and hence, using~\eqref{lboundELf},
\[
\lim_{\rho \uparrow 1} (1-\rho)^{1/\alpha} \expect{B}^{1/\alpha} \nu^{1/\alpha} \expect{\TP} = 1.
\]
Noting that $\TP_{{\rm M/G/1}}/\expect{\TP} \cond 0$ as $\rho \uparrow 1$ we thus find, using the Fuhrmann-Cooper decomposition~\eqref{fuco1},
\[
\lim_{\rho \uparrow 1} (1-\rho) f(\expect{\TP_I}) = \frac{1}{\expect{B}}.
\]
Further, from~\eqref{actisdeactB} we find
\[
\lim_{\rho \uparrow 1} (1-\rho) \expect{f(\TP_I)} = \frac{1}{\expect{B}},
\]
so that
\begin{equation}
\label{finaleqdeg}
\lim_{\rho \uparrow 1} \frac{\expect{f(\TP_I)}}{\DP(\expect{\TP_I})} = 1.
\end{equation}
Finally, because $f(\cdot)$ is strictly concave,
\[
\gamma_{a, b}(x) =\frac{(b - 1) a^\alpha + (1 - a) b^{\alpha}}{b - a} < \Big(\frac{b-1}{b-a} a + \frac{1-a}{b-a} b\Big)^{\alpha}=1,
\]
and we find~\eqref{LoverELto1} by invoking~\eqref{finaleqdeg} and Proposition~\ref{prop1}.
\end{proof}
The proof of Theorem~\ref{degenerate} can be simplified if we assume $\expect{B^3}<\infty$ and $\expect{X_i^3}<\infty$ for all $i\geq 0$. In that case we can find $\expect{\TP^2}$ along similar lines as we found $\expect{\TP}$ in the proof of Theorem~\ref{degenerate}. It then follows that $\lim_{\rho\uparrow 1} \expect{\TP^2}/\expect{\TP}^2 = 1$ in this case, so that the assertion in Theorem~\ref{degenerate} follows from Chebyshev's inequality.
\section{Conclusions}
\label{concl}
In this paper we have obtained results for queues with random back-offs. Such random back-offs can be modeled by rates of activating during a back-off period $f(\cdot)$ and by the probability of initiating a back-off period after a service completion $\DP(\cdot)$. For various choices of $f(\cdot)$ and $\DP(\cdot)$, and under some additional assumptions, we have obtained exact expressions for the distribution of the number of customers in the system in Section~\ref{exactana} and bounds for the mean stationary number of customers in the system in Section~\ref{secbounds}. These results were employed to derive heavy-traffic limit theorems in Section~\ref{HTresults}, which showed the existence of a clear trichotomy, that can be best explained through the function $\DP(\cdot)$. Clearly, in order for the system to be stable when $\rho\uparrow 1$, $\DP(\cdot)$ should eventually, as the number of customers increases, go to zero. This condition is also sufficient, see Lemma~\ref{stable}. Roughly speaking (for details and further assumptions see Section~\ref{HTresults}), the queueing system with back-offs can display three modes of operation, depending on the asymptotic decay rate of $\DP(\cdot)$. These three modes can be understood as follows:

(i) The case $\DP(i)=1/(i+1)$ represents the {\it balanced regime}, in which the heavy-traffic behavior is influenced by both the system behavior without back-offs, and the back-off periods. Hence, large queue sizes typically build up according to sample paths that display exceptional (interrupted) busy periods and exceptional sequences of back-off periods. The number of customers $L$ is of the order $O((1-\rho)^{-1})$, and the more detailed information in Theorem~\ref{eq1overHT} reveals a gamma distribution containing information of the arrival process, service times, and the back-off function.

(ii) When $\DP(\cdot)$ decays faster than $1/(i+1)$, for instance $\DP(i)=O(a^{i})$ with $a\in(0,1)$, it is shown that the heavy-traffic behavior of the system is as if the back-off periods do not exist. The intuition is that when $\rho\uparrow 1$, the system spends most of the time in states of large queue sizes in which the probability of initiating a back-off becomes negligible. Indeed, in Theorem~\ref{eqaiHT} it is shown that for $\DP(i)=a^{i}$ with $a\in(0,1)$ the heavy-traffic behavior of the system is the same as that of an M/G/1 system without back-offs.

(iii)  When $\DP(\cdot)$ decays slower than $1/(i+1)$, for instance for $\DP(i)=O((i+1)^{-a})$ with $a\in(0,1)$, it is shown that the heavy-traffic behavior of the system is completely determined by the back-offs. Theorem~\ref{degenerate} says that for $\DP(i)=1/(i+1)^a$ with $a\in(0,1)$ the mean number of customers is $O((1-\rho)^{-1/a})$, while the stationary distribution of the number of customers is degenerate and thus strongly concentrated around its mean. Hence, for systems with such back-offs, the heavy-traffic behavior is entirely different from that of systems without back-off (the M/G/1 system in this case).

Another relevant observation that follows from the analysis is that the order of the number of customers in the system~$\TP$ in heavy traffic is independent of the mean length of the vacation and service times in all three modes.

The revealed trichotomy for the single-node system provides some important insights for the wireless networks equipped with back-off rules similar as discussed in Section~\ref{intro}, because the single-node system provides a {\it best-case scenario} for networks with multiple nodes. To see this, first notice that for a network to be in heavy traffic, the aggregated traffic intensity in some clique, a set of nodes of which at most one can be active at the same time, tends to $1$. The total number of packets in this clique behaves like the number of packets in the corresponding single-node system with two modifications.
First, the probability to go into back-off is based on a subset of all customers. Hence the network will be in back-off more often if $\DP(\cdot)$ is decreasing and the total number of customers in both systems were equal.
Second, the length of the vacation period is the minimum over the back-off lengths of all non-blocked nodes, which might change during the vacation period. We thus see that the vacation length is at least equal to the minimum back-off length of a node in the clique assuming none of the nodes is prevented from activating. Hence, taking this minimum as the actual vacation length in the corresponding single-node system, we see that vacations in the network always take at least as long as in the single-node system.
As both modifications intuitively have a negative impact on the delay performance, it seems reasonable to assume that the total number of packets in the network is at best equal to the total number of customers in the corresponding single-node system.

We thus see that more aggressive activation schemes can potentially improve the delay performance. On the other hand, however, these aggressive activation schemes may fail to achieve maximum stability and hence are unstable in heavy traffic. Maximum stability for general networks is only guaranteed when $\DP(i)=O(1/(\log(i)+1))$ (see \cite{GS10,JSSW10,RSS09,SST11}) which, based on the analysis in this paper, might result in very poor delay performance in heavy traffic. An interesting topic for further research is to establish for which scenarios the delay performance in the network is roughly equal to the delay performance of the corresponding single-node system.
\section{Acknowledgments}
This work was supported by Microsoft Research through its PhD Scholarship Programme, an ERC starting grant and a TOP grant from NWO.

We thank J.A.C. Resing for bringing the work of Sevast'yanov to our attention.


\bibliographystyle{plain}

%
%
%
%
%

\appendix
\section{Preliminary results and proofs}
\label{appendix}

This appendix contains a few technical lemmas and some proofs that have been relegated from the main text. To make this appendix self-contained we restate some results from the main text.

\begin{lemma}
\label{convproofs}
${\rm (i)}$ If $X_0 \eqd X$, then,
\[
\prod_{i=0}^{\infty}Y(a^ir),
\]
with $0 \leq a <1$, converges for all $r\in [0,1]$.

${\rm (ii)}$ If $X_0 >_{{\rm st}} X$, then,
\[
\sum_{j=0}^{\infty}K(a^jr)\prod_{i=0}^{j-1}Y(a^ir),
\]
with $0 \leq a <1$, converges for all $r\in [0,1]$.
\end{lemma}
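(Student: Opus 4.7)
The approach hinges on two facts already recorded in the text preceding the lemma: both $Y(\cdot)$ and $K(\cdot)$ are non-negative, continuously differentiable functions on $[0,1]$, and $Y(\cdot)$ is in fact strictly positive there, as follows from the factorization~\eqref{defYalt} together with $G_{\TP_{{\rm M/G/1}}}(x)\geq 1-\rho>0$, $G_{X^{\rm {res}}}(x)>0$, and $\expect{X}>0$. A key observation is that $Y(0)=1-G_X(0)=1-\pr{X=0}$, so the two cases differ in precisely one crucial aspect: case (i) has $Y(0)=1$, whereas case (ii) has $Y(0)<1$.

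For part (i), since $Y(0)=1$ and $Y$ is $C^1$ on $[0,1]$, the mean value theorem gives
\[
|Y(x)-1|\leq M x, \quad x\in[0,1],
\]
with $M=\sup_{x\in[0,1]}|Y'(x)|<\infty$. Hence $\sum_{i=0}^{\infty}|Y(a^i r)-1|\leq Mr/(1-a)<\infty$, and since each factor is strictly positive, this absolute summability is precisely the standard criterion for the infinite product $\prod_{i=0}^{\infty}Y(a^i r)$ to converge (to a strictly positive limit).

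For part (ii), the factors $Y(a^i r)$ no longer approach $1$ but $Y(0)<1$, so the partial products decay geometrically in $j$ and readily absorb the bounded factor $K(a^j r)$. By continuity of $Y$ at $0$, pick $c\in(Y(0),1)$ and $\delta>0$ with $Y(x)\leq c$ for all $x\in[0,\delta]$, and choose $N_0$ so that $a^{N_0}\leq\delta$. For every $j\geq N_0$,
\[
\prod_{i=0}^{j-1}Y(a^i r)\leq \|Y\|_{\infty}^{N_0}\,c^{\,j-N_0},
\]
where $\|Y\|_{\infty}<\infty$ and $\|K\|_{\infty}<\infty$ by continuity on the compact interval $[0,1]$. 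The tail $\sum_{j\geq N_0} K(a^j r)\prod_{i=0}^{j-1}Y(a^i r)$ is therefore dominated by a convergent geometric series, and adding the finitely many initial terms gives convergence of the full sum.

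The main conceptual step is recognizing that the two parts call for structurally different convergence mechanisms, even though the formulas look parallel. In case (i) the product converges because the factors cluster around $1$ at a geometric rate; in case (ii) the series converges because the factors stay uniformly below $1$ near $0$, forcing the partial products to decay geometrically. Once this dichotomy is identified, each part reduces to a routine bound.
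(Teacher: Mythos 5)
Your proof is correct, but it follows a genuinely different (and more elementary) route than the paper's. The paper handles both parts with the ratio test: for (i) it first reduces convergence of the product to convergence of the series $\sum_{i}(Y(a^ir)-1)$ and then shows, via l'H\^opital's rule applied to the ratio of successive terms, that this ratio tends to $a<1$; for (ii) it applies the ratio test directly to the series, the limiting ratio being $Y(0)<1$. You instead use explicit domination: the mean value theorem bound $|Y(x)-1|\leq Mx$ (valid because $Y(0)=1$ in case (i) and $Y$ is $C^1$ on $[0,1]$) yields absolute summability of the deviations and hence convergence of the product to a positive limit, while in case (ii) the uniform bound $Y(x)\leq c<1$ near $0$ forces geometric decay of the partial products, so the series is dominated by a geometric series. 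Both arguments rest on exactly the same two structural facts --- $Y(0)=1-\pr{X=0}$ distinguishes the cases, and $a^i r\to 0$ geometrically --- but your version buys a little robustness: the paper's ratio test in part (i) is formally undefined if $Y(a^ir)-1$ vanishes for some $i$, and it requires the l'H\^opital computation, whereas your absolute-convergence criterion sidesteps both. Your positivity argument for the factors via~\eqref{defYalt} (needed to conclude the product limit is nonzero) is a detail the paper leaves implicit, and it checks out since $G_{\TP_{{\rm M/G/1}}}(x)\geq 1-\rho$ and $\pr{X^{\rm res}=0}>0$.
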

\begin{proof}
To prove case~${\rm (i)}$ first note that this infinite product converges if and only if
\[
\sum_{i=0}^{\infty}(Y(a^ir)-1)
\]
converges. To prove convergence of this infinite series we will use the ratio test (d'Alembert's criterion). We have, with $h(r)=\tilde{B}(\lambda(1-r))$ and $k(r)=\tilde{B}(\lambda(1-r))G_X(r)$,
\[
\lim_{i\rightarrow \infty} \left|\frac{Y(a^{i+1}r)-1}{Y(a^ir)-1}\right| = \lim_{i\rightarrow \infty} \frac{(-a^i r + h(a^i r ))(a^{i+1} r - k(a^{i+1}r ))}{(-a^{i+1} r + h(a^{i+1} r ))(a^{i} r - k(a^{i}r))} = \lim_{i\rightarrow \infty} \frac{a^{i+1} r - k(a^{i+1}r)}{a^{i} r - k(a^{i}r)}.
\]
By l'H\^opital's rule,
\[
\lim_{i\rightarrow \infty} \frac{a^{i+1} r - k(a^{i+1}r)}{a^{i} r - k(a^{i}r)} =
\lim_{i\rightarrow \infty} a \frac{1+\lambda G_X(a^{i+1}r)\tilde{B}'(\lambda(1-a^{i+1}r))-\tilde{B}(\lambda(1-a^{i+1}r))G'_X(a^{i+1}r)}
{1+\lambda G_X(a^{i}r)\tilde{B}'(\lambda(1-a^{i}r))-\tilde{B}(\lambda(1-a^{i}r))G'_X(a^{i+1}r)}.
\]

We thus find
\[
\lim_{i\rightarrow \infty} \left|\frac{Y(a^{i+1}r)-1}{Y(a^ir)-1}\right| = a < 1,
\]
proving case~${\rm (i)}$.

For case~${\rm (ii)}$ note that
\[
\lim_{n\rightarrow \infty} \frac{K(a^{n+1}r) \prod_{i=0}^{n}Y(a^ir)}{K(a^{n}r) \prod_{i=0}^{n-1}Y(a^ir)}=Y(0)<1,
\]
for all $r$ as $0\leq a < 1$. Thus, by the ratio test, the series in case~${\rm (ii)}$ converges.
\end{proof}


\begin{replemma}{gcoup}
For the vacation discipline described in {\rm Example~\ref{vactime}}, and assuming that $ \hat{\DP}(i) \geq \DP(i)$, $i \geq 0$, $\hat{\TP}(0)=\TP(0)$ and $\hat{\sigma}(0)=\sigma(0)=0$, $\{\hat{\TP}(t)\}_{t\geq 0} \geq_{{\rm st}} \{\TP(t)\}_{t\geq 0}$.
\end{replemma}
\begin{proof}
To prove this lemma we will construct a coupling $(\{\TP^{*}(t),\sigma^{*}(t)\}_{t\geq 0},\{\hat{\TP}^{*}(t),\hat{\sigma}^{*}(t)\}_{t\geq 0})$ between $\{\TP(t),\sigma(t)\}_{t\geq 0}$ and $\{\hat{\TP}(t),\hat{\sigma}(t)\}_{t\geq 0}$ such that $\hat{\TP}^{*}(t)\geq \TP^{*}(t)$ for all $t\geq 0$. The result then follows.

Let $A$, $B$ and $V$ be (infinite) vectors of realizations of independent random variables, where $A_i$ is exponentially distributed with parameter $\lambda$, $B_i$ is generally distributed with distribution function $F_B(\cdot)$ and $V_i$ is generally distributed with distribution function $F_V(\cdot)$. Further let $N_V(t)$ and $N_B(t)$ be the total number of activations and service completions of the process belonging to $\DP(\cdot)$. Similarly, let $\hat{N}_V(t)$ and $\hat{N}_B(t)$ be the total number of activations and service completions of the process belonging to $\hat{\DP}(\cdot)$. We will construct a coupling such that $\hat{\TP}^{*}(t)=\TP^{*}(t)$ and $\hat{\sigma}^{*}(t)\leq\sigma^{*}(t)$ or $\hat{\TP}^{*}(t)>\TP^{*}(t)$, $\hat{N}_V^{*}(t)\geq N_V^{*}(t)$ and $\hat{N}_B^{*}(t)\leq N_B^{*}(t)$, for all $t\geq 0$.

Denote by $R(t)$ the remaining time until an activation or service completion in the process belonging to $\DP(\cdot)$ at time $t$ and, similarly, denote by $\hat{R}(t)$ the remaining time until an activation or service completion in the process belonging to $\hat{\DP}(\cdot)$. We make arrivals occur simultaneously in both processes and denote by $J(t)$ the remaining time until an arrival. Initially set $R(\tau_0)=\hat{R}(\tau_0)=V_1$ and $J(\tau_0)=A_1$.

Define the jump epochs $0\equiv \tau_0 < \tau_1 < \dots$. The jump epochs and the coupling are constructed recursively and we inductively prove the statement in the construction of this coupling. First take $\TP^{*}(\tau_0)=\TP(\tau_0)$, $\sigma^{*}(\tau_0)=\sigma(\tau_0)$, $\hat{\TP}^{*}(\tau_0)=\hat{\TP}(\tau_0)$ and $\hat{\sigma}^{*}(\tau_0)=\hat{\sigma}(\tau_0)$ and note that $\hat{\TP}^{*}(\tau_0) = \TP^{*}(\tau_0)$ and $\hat{\sigma}^{*}(\tau_0)=\sigma^{*}(\tau_0)$ by assumption. Also, $\hat{N}_V^{*}(\tau_0)= N_V^{*}(\tau_0)=\hat{N}_B^{*}(\tau_0)=N_B^{*}(\tau_0)=0$.

Now assume $\hat{\TP}^{*}(\tau_i)= \TP^{*}(\tau_i)$ and $\hat{\sigma}^{*}(\tau_i)\leq\sigma^{*}(\tau_i)$ or $\hat{\TP}^{*}(\tau_i)>\TP^{*}(\tau_i)$, $\hat{N}_V^{*}(\tau_i)\geq N_V^{*}(\tau_i)$ and $\hat{N}_B^{*}(\tau_i)\leq N_B^{*}(\tau_i)$ for some $i \in \mathbb{N}_0$. Set $\tau_{i+1}=\tau_i+\min\{R(\tau_i),\hat{R}(\tau_i),J(\tau_i)\}$ and $\TP^{*}(t)=\TP^{*}(\tau_i)$, $\hat{\TP}^{*}(t)=\hat{\TP}^{*}(\tau_i)$, $\sigma^{*}(t)=\sigma^{*}(\tau_i)$, $\hat{\sigma}^{*}(t)=\hat{\sigma}^{*}(\tau_i)$, $N_V^{*}(t)=N_V^{*}(\tau_i)$, $\hat{N}_V^{*}(t)=\hat{N}_V^{*}(\tau_i)$, $N_B^{*}(t)=N_B^{*}(\tau_i)$ and $\hat{N}_B^{*}(t)=\hat{N}_B^{*}(\tau_i)$ for all $t\in(\tau_i,\tau_{i+1})$. So, by the induction hypothesis, $\hat{\TP}^{*}(t)= \TP^{*}(t)$ and $\hat{\sigma}^{*}(t)\leq\sigma^{*}(t)$ or $\hat{\TP}^{*}(t)>\TP^{*}(t)$, $\hat{N}_V^{*}(t)\geq N_V^{*}(t)$ and $\hat{N}_B^{*}(t)\leq N_B^{*}(t)$ for all $\tau_i\leq t < \tau_{i+1}$. To define the values at time $\tau_{i+1}$ we distinguish nine cases.

Case 1: $J(\tau_i)=\min\{R(\tau_i),\hat{R}(\tau_i),J(\tau_i)\}$. Set $\TP^{*}(\tau_{i+1})=\TP^{*}(\tau_{i})+1$, $\hat{\TP}^{*}(\tau_{i+1})=\hat{\TP}^{*}(\tau_{i})+1$, $\sigma^{*}(\tau_{i+1})=\sigma^{*}(\tau_i)$, $\hat{\sigma}^{*}(\tau_{i+1})=\hat{\sigma}^{*}(\tau_i)$, $N_V^{*}(\tau_{i+1})=N_V^{*}(\tau_i)$, $\hat{N}_V^{*}(\tau_{i+1})=\hat{N}_V^{*}(\tau_i)$, $N_B^{*}(\tau_{i+1})=N_B^{*}(\tau_i)$ and $\hat{N}_B^{*}(\tau_{i+1})=\hat{N}_B^{*}(\tau_i)$. Further set $R(\tau_{i+1})=R(\tau_i)-\tau_{i+1}+\tau_i$, $\hat{R}(\tau_{i+1})=\hat{R}(\tau_i)-\tau_{i+1}+\tau_i$ and $J(\tau_{i+1})=A_{i+1}$.

Case 2: $R(\tau_i)=\min\{R(\tau_i),\hat{R}(\tau_i),J(\tau_i)\}$, $R(\tau_i)=\hat{R}(\tau_i)$ and $\sigma^{*}(\tau_i)=\hat{\sigma}^{*}(\tau_i)=0$. Set $\TP^{*}(\tau_{i+1})=\TP^{*}(\tau_{i})$, $\hat{\TP}^{*}(\tau_{i+1})=\hat{\TP}^{*}(\tau_{i})$, $\sigma^{*}(\tau_{i+1})={\rm I}_{\{\TP^{*}(\tau_{i})>0\}}$, $\hat{\sigma}^{*}(\tau_{i+1})={\rm I}_{\{\hat{\TP}^{*}(\tau_{i})>0\}}$, $N_V^{*}(\tau_{i+1})=N_V^{*}(\tau_i)+1$, $\hat{N}_V^{*}(\tau_{i+1})=\hat{N}_V^{*}(\tau_i)+1$, $N_B^{*}(\tau_{i+1})=N_B^{*}(\tau_i)$ and $\hat{N}_B^{*}(\tau_{i+1})=\hat{N}_B^{*}(\tau_i)$. Further set $R(\tau_{i+1})=B_{N_{B}^{*}(\tau_{i+1})+1} \sigma^{*}(\tau_{i+1}) + V_{N_{V}^{*}(\tau_{i+1})+1} (1-\sigma^{*}(\tau_{i+1}))$, $\hat{R}(\tau_{i+1})=B_{\hat{N}_{B}^{*}(\tau_{i+1})+1} \hat{\sigma}^{*}(\tau_{i+1}) + V_{\hat{N}_{V}^{*}(\tau_{i+1})+1}(1-\hat{\sigma}^{*}(\tau_{i+1}))$ and $J(\tau_{i+1})=J(\tau_{i})-\tau_{i+1}+\tau_i$.

Case 3: $R(\tau_i)=\min\{R(\tau_i),\hat{R}(\tau_i),J(\tau_i)\}$, $R(\tau_i)=\hat{R}(\tau_i)$ and $\sigma^{*}(\tau_i)=\hat{\sigma}^{*}(\tau_i)=1$. Let $U_i$ be a realization of a random variable that is uniformly distributed in $[0,1]$. Now set $\TP^{*}(\tau_{i+1})=\TP^{*}(\tau_{i})-1$, $\hat{\TP}^{*}(\tau_{i+1})=\hat{\TP}^{*}(\tau_{i})-1$, $\sigma^{*}(\tau_{i+1})={\rm I}_{\{\DP(\TP^{*}(\tau_{i+1}))<U_i\}}$, $\hat{\sigma}^{*}(\tau_{i+1})={\rm I}_{\{\hat{\DP}(\hat{\TP}^{*}(\tau_{i+1}))<U_i\}}$, $N_V^{*}(\tau_{i+1})=N_V^{*}(\tau_i)$, $\hat{N}_V^{*}(\tau_{i+1})=\hat{N}_V^{*}(\tau_i)$, $N_B^{*}(\tau_{i+1})=N_B^{*}(\tau_i)+1$ and $\hat{N}_B^{*}(\tau_{i+1})=\hat{N}_B^{*}(\tau_i)+1$. Further set $R(\tau_{i+1})=B_{N_{B}^{*}(\tau_{i+1})+1} \sigma^{*}(\tau_{i+1}) + V_{N_{V}^{*}(\tau_{i+1})+1} (1-\sigma^{*}(\tau_{i+1}))$, $\hat{R}(\tau_{i+1})=B_{\hat{N}_{B}^{*}(\tau_{i+1})+1} \hat{\sigma}^{*}(\tau_{i+1}) + V_{\hat{N}_{V}^{*}(\tau_{i+1})+1}(1-\hat{\sigma}^{*}(\tau_{i+1}))$ and $J(\tau_{i+1})=J(\tau_{i})-\tau_{i+1}+\tau_i$.

Case 4: $R(\tau_i)=\min\{R(\tau_i),\hat{R}(\tau_i),J(\tau_i)\}$, $R(\tau_i)<\hat{R}(\tau_i)$ and $\sigma^{*}(\tau_i)=0$. Set $\TP^{*}(\tau_{i+1})=\TP^{*}(\tau_{i})$, $\hat{\TP}^{*}(\tau_{i+1})=\hat{\TP}^{*}(\tau_{i})$, $\sigma^{*}(\tau_{i+1})={\rm I}_{\{\TP^{*}(\tau_{i})>0\}}$, $\hat{\sigma}^{*}(\tau_{i+1})=\hat{\sigma}^{*}(\tau_{i+1})$, $N_V^{*}(\tau_{i+1})=N_V^{*}(\tau_i)+1$, $\hat{N}_V^{*}(\tau_{i+1})=\hat{N}_V^{*}(\tau_i)$, $N_B^{*}(\tau_{i+1})=N_B^{*}(\tau_i)$ and $\hat{N}_B^{*}(\tau_{i+1})=\hat{N}_B^{*}(\tau_i)$. Further set $R(\tau_{i+1})=B_{N_{B}^{*}(\tau_{i+1})+1} \sigma^{*}(\tau_{i+1}) + V_{N_{V}^{*}(\tau_{i+1})+1} (1-\sigma^{*}(\tau_{i+1}))$, $\hat{R}(\tau_{i+1})=\hat{R}(\tau_i)-\tau_{i+1}+\tau_i$ and $J(\tau_{i+1})=J(\tau_{i})-\tau_{i+1}+\tau_i$.

Case 5: $R(\tau_i)=\min\{R(\tau_i),\hat{R}(\tau_i),J(\tau_i)\}$, $R(\tau_i)<\hat{R}(\tau_i)$ and $\sigma^{*}(\tau_i)=1$. Let $U_i$ be a realization of a random variable that is uniformly distributed in $[0,1]$. Now set $\TP^{*}(\tau_{i+1})=\TP^{*}(\tau_{i})-1$, $\hat{\TP}^{*}(\tau_{i+1})=\hat{\TP}^{*}(\tau_{i})$, $\sigma^{*}(\tau_{i+1})={\rm I}_{\{\DP(\TP^{*}(\tau_{i+1}))<U_i\}}$, $\hat{\sigma}^{*}(\tau_{i+1})=\hat{\sigma}^{*}(\tau_{i+1})$, $N_V^{*}(\tau_{i+1})=N_V^{*}(\tau_i)$, $\hat{N}_V^{*}(\tau_{i+1})=\hat{N}_V^{*}(\tau_i)$, $N_B^{*}(\tau_{i+1})=N_B^{*}(\tau_i)+1$ and $\hat{N}_B^{*}(\tau_{i+1})=\hat{N}_B^{*}(\tau_i)$. Further set $R(\tau_{i+1})=B_{N_{B}^{*}(\tau_{i+1})+1} \sigma^{*}(\tau_{i+1}) + V_{N_{V}^{*}(\tau_{i+1})+1} (1-\sigma^{*}(\tau_{i+1}))$, $\hat{R}(\tau_{i+1})=\hat{R}(\tau_i)-\tau_{i+1}+\tau_i$ and $J(\tau_{i+1})=J(\tau_{i})-\tau_{i+1}+\tau_i$.

Case 6: $\hat{R}(\tau_i)=\min\{R(\tau_i),\hat{R}(\tau_i),J(\tau_i)\}$, $R(\tau_i)>\hat{R}(\tau_i)$ and $\hat{\sigma}^{*}(\tau_i)=0$. Set $\TP^{*}(\tau_{i+1})=\TP^{*}(\tau_{i})$, $\hat{\TP}^{*}(\tau_{i+1})=\hat{\TP}^{*}(\tau_{i})$, $\sigma^{*}(\tau_{i+1})=\sigma^{*}(\tau_{i})$, $\hat{\sigma}^{*}(\tau_{i+1})={\rm I}_{\{\hat{\TP}^{*}(\tau_{i})>0\}}$, $N_V^{*}(\tau_{i+1})=N_V^{*}(\tau_i)$, $\hat{N}_V^{*}(\tau_{i+1})=\hat{N}_V^{*}(\tau_i)+1$, $N_B^{*}(\tau_{i+1})=N_B^{*}(\tau_i)$ and $\hat{N}_B^{*}(\tau_{i+1})=\hat{N}_B^{*}(\tau_i)$. Further set $R(\tau_{i+1})=R(\tau_i)-\tau_{i+1}+\tau_i$, $\hat{R}(\tau_{i+1})=B_{\hat{N}_{B}^{*}(\tau_{i+1})+1} \hat{\sigma}^{*}(\tau_{i+1}) + V_{\hat{N}_{V}^{*}(\tau_{i+1})+1} (1-\hat{\sigma}^{*}(\tau_{i+1}))$ and $J(\tau_{i+1})=J(\tau_{i})-\tau_{i+1}+\tau_i$.

Case 7: $\hat{R}(\tau_i)=\min\{R(\tau_i),\hat{R}(\tau_i),J(\tau_i)\}$, $R(\tau_i)>\hat{R}(\tau_i)$ and $\hat{\sigma}^{*}(\tau_i)=1$. Let $U_i$ be a realization of a random variable that is uniformly distributed in $[0,1]$. Now set $\TP^{*}(\tau_{i+1})=\TP^{*}(\tau_{i})$, $\hat{\TP}^{*}(\tau_{i+1})=\hat{\TP}^{*}(\tau_{i})-1$, $\sigma^{*}(\tau_{i+1})=\sigma^{*}(\tau_{i+1})$, $\hat{\sigma}^{*}(\tau_{i+1})={\rm I}_{\{\hat{\DP}(\hat{\TP}^{*}(\tau_{i+1}))<U_i\}}$, $N_V^{*}(\tau_{i+1})=N_V^{*}(\tau_i)$, $\hat{N}_V^{*}(\tau_{i+1})=\hat{N}_V^{*}(\tau_i)$, $N_B^{*}(\tau_{i+1})=N_B^{*}(\tau_i)$ and $\hat{N}_B^{*}(\tau_{i+1})=\hat{N}_B^{*}(\tau_i)+1$. Further set $R(\tau_{i+1})=R(\tau_i)-\tau_{i+1}+\tau_i$, $\hat{R}(\tau_{i+1})=B_{\hat{N}_{B}^{*}(\tau_{i+1})+1} \hat{\sigma}^{*}(\tau_{i+1}) + V_{\hat{N}_{V}^{*}(\tau_{i+1})+1} (1-\hat{\sigma}^{*}(\tau_{i+1}))$ and $J(\tau_{i+1})=J(\tau_{i})-\tau_{i+1}+\tau_i$.

Case 8: $R(\tau_i)=\min\{R(\tau_i),\hat{R}(\tau_i),J(\tau_i)\}$, $R(\tau_i)=\hat{R}(\tau_i)$, $\sigma^{*}(\tau_i)=0$ and $\hat{\sigma}^{*}(\tau_i)=1$. Let $U_i$ be a realization of a random variable that is uniformly distributed in $[0,1]$. Now set $\TP^{*}(\tau_{i+1})=\TP^{*}(\tau_{i})$, $\hat{\TP}^{*}(\tau_{i+1})=\hat{\TP}^{*}(\tau_{i})-1$, $\sigma^{*}(\tau_{i+1})={\rm I}_{\{\TP^{*}(\tau_{i})>0\}}$, $\hat{\sigma}^{*}(\tau_{i+1})={\rm I}_{\{\hat{\DP}(\hat{\TP}^{*}(\tau_{i}))<U_i\}}$, $N_V^{*}(\tau_{i+1})=N_V^{*}(\tau_i)+1$, $\hat{N}_V^{*}(\tau_{i+1})=\hat{N}_V^{*}(\tau_i)$, $N_B^{*}(\tau_{i+1})=N_B^{*}(\tau_i)$ and $\hat{N}_B^{*}(\tau_{i+1})=\hat{N}_B^{*}(\tau_i)+1$. Further set $R(\tau_{i+1})=B_{N_{B}^{*}(\tau_{i+1})+1} \sigma^{*}(\tau_{i+1}) + V_{N_{V}^{*}(\tau_{i+1})+1} (1-\sigma^{*}(\tau_{i+1}))$, $\hat{R}(\tau_{i+1})=B_{\hat{N}_{B}^{*}(\tau_{i+1})+1} \hat{\sigma}^{*}(\tau_{i+1}) + V_{\hat{N}_{V}^{*}(\tau_{i+1})+1}(1-\hat{\sigma}^{*}(\tau_{i+1}))$ and $J(\tau_{i+1})=J(\tau_{i})-\tau_{i+1}+\tau_i$.

Case 9: $R(\tau_i)=\min\{R(\tau_i),\hat{R}(\tau_i),J(\tau_i)\}$, $R(\tau_i)=\hat{R}(\tau_i)$, $\sigma^{*}(\tau_i)=1$ and $\hat{\sigma}^{*}(\tau_i)=0$. Let $U_i$ be a realization of a random variable that is uniformly distributed in $[0,1]$. Now set $\TP^{*}(\tau_{i+1})=\TP^{*}(\tau_{i})-1$, $\hat{\TP}^{*}(\tau_{i+1})=\hat{\TP}^{*}(\tau_{i})$, $\sigma^{*}(\tau_{i+1})={\rm I}_{\{\DP(\TP^{*}(\tau_{i}))<U_i\}}$, $\hat{\sigma}^{*}(\tau_{i+1})={\rm I}_{\{\hat{\TP}^{*}(\tau_{i})>0\}}$, $N_V^{*}(\tau_{i+1})=N_V^{*}(\tau_i)$, $\hat{N}_V^{*}(\tau_{i+1})=\hat{N}_V^{*}(\tau_i)+1$, $N_B^{*}(\tau_{i+1})=N_B^{*}(\tau_i)+1$ and $\hat{N}_B^{*}(\tau_{i+1})=\hat{N}_B^{*}(\tau_i)$. Further set $R(\tau_{i+1})=B_{N_{B}^{*}(\tau_{i+1})+1} \sigma^{*}(\tau_{i+1}) + V_{N_{V}^{*}(\tau_{i+1})+1} (1-\sigma^{*}(\tau_{i+1}))$, $\hat{R}(\tau_{i+1})=B_{\hat{N}_{B}^{*}(\tau_{i+1})+1} \hat{\sigma}^{*}(\tau_{i+1}) + V_{\hat{N}_{V}^{*}(\tau_{i+1})+1}(1-\hat{\sigma}^{*}(\tau_{i+1}))$ and $J(\tau_{i+1})=J(\tau_{i})-\tau_{i+1}+\tau_i$.

Note that the remaining cases occur with probability zero as the random variables in the vector $A$ are exponentially distributed.

From the sample path construction we can deduce that
\begin{equation}
\label{difTP}
\hat{\TP}^{*}(\tau_i)-{\TP^{*}}(\tau_i)=N_B^{*}(\tau_i)-\hat{N}_B^{*}(\tau_i).
\end{equation}
Thus $\hat{\TP}^{*}(\tau_i)={\TP^{*}}(\tau_i)$ if and only if $N_B^{*}(\tau_i)=\hat{N}_B^{*}(\tau_i)$. Further we can deduce that
\begin{eqnarray}
\label{difR}
\hat{R}(\tau_i)-{R}(\tau_i)&=&\sum_{j=N_V^{*}(\tau_i)+1}^{\hat{N}_V^{*}(\tau_i)} V_j - \sum_{k=\hat{N}_B^{*}(\tau_i)+1}^{N_B^{*}(\tau_i)} B_k +(1-\hat{\sigma}^{*}(\tau_i))V_{\hat{N}_V^{*}(\tau_i)+1} \\
&& + \hat{\sigma}^{*}(\tau_i)B_{\hat{N}_B^{*}(\tau_i)+1} -(1-\sigma^{*}(\tau_i))V_{N_V^{*}(\tau_i)+1} - \sigma^{*}(\tau_i)B_{N_B^{*}(\tau_i)+1}.
\end{eqnarray}

We now need to prove that $\hat{\TP}^{*}(\tau_{i+1})= \TP^{*}(\tau_{i+1})$ and $\hat{\sigma}^{*}(\tau_{i+1})\leq\sigma^{*}(\tau_{i+1})$ or $\hat{\TP}^{*}(\tau_{i+1})>\TP^{*}(\tau_{i+1})$, $\hat{N}_V^{*}(\tau_{i+1})\geq N_V^{*}(\tau_{i+1})$ and $\hat{N}_B^{*}(\tau_{i+1})\leq N_B^{*}(\tau_{i+1})$ in all nine cases.

For case 1 this follows immediately from the induction hypothesis.

For case 2 note that $\hat{\sigma}^{*}(\tau_{i+1})>\sigma^{*}(\tau_{i+1})$ only if $\hat{\TP}^{*}(\tau_{i+1})>\TP^{*}(\tau_{i+1})=0$, so that the statement holds in this case as well.

For case 3 note that $\hat{\DP}(\hat{\TP}^{*}(\tau_{i}))\geq \DP(\TP^{*}(\tau_{i}))$, so that $\hat{\sigma}^{*}(\tau_{i+1})\leq\sigma^{*}(\tau_{i+1})$ if $\hat{\TP}^{*}(\tau_{i+1})= \TP^{*}(\tau_{i+1})$. The statement now follows.

For case 4 first assume that $\hat{\TP}^{*}(\tau_i)={\TP^{*}}(\tau_i)$. Then it follows from the induction hypothesis that $\sigma^{*}(\tau_i)=\hat{\sigma}^{*}(\tau_i)=0$, and from equation \eqref{difTP} it follows that $N_B^{*}(\tau_i)=\hat{N}_B^{*}(\tau_i)$. Then, for $R(\tau_i)<\hat{R}(\tau_i)$ to hold we need $\hat{N}_V^{*}(\tau_{i})> N_V^{*}(\tau_{i})$ as follows from equation \eqref{difR}. If $\hat{\TP}^{*}(\tau_i)>{\TP^{*}}(\tau_i)$ we have $N_B^{*}(\tau_i)>\hat{N}_B^{*}(\tau_i)$. Thus we again need $\hat{N}_V^{*}(\tau_{i})> N_V^{*}(\tau_{i})$ in order to have $R(\tau_i)<\hat{R}(\tau_i)$. The statement now follows.

For case 5 the statement follows immediately from the induction hypothesis.

For case 6 first assume that $\hat{\TP}^{*}(\tau_i)={\TP^{*}}(\tau_i)$ and $\sigma^{*}(\tau_i)=\hat{\sigma}^{*}(\tau_i)=0$. Following the same reasoning as for case 4 this yields that we need $\hat{N}_V^{*}(\tau_{i}) < N_V^{*}(\tau_{i})$ to have $R(\tau_i)>\hat{R}(\tau_i)$, contradicting the induction hypothesis. Thus it not possible to be in case 6 if $\hat{\TP}^{*}(\tau_i)={\TP^{*}}(\tau_i)$ and $\sigma^{*}(\tau_i)=\hat{\sigma}^{*}(\tau_i)=0$. For all other situations the statement is easily seen to hold.

For case 7 first assume that $\hat{\TP}^{*}(\tau_i)={\TP^{*}}(\tau_i)$ and $\sigma^{*}(\tau_i)=\hat{\sigma}^{*}(\tau_i)=1$. Following the same reasoning as in case 6 this would yield that we need $\hat{N}_V^{*}(\tau_{i}) < N_V^{*}(\tau_{i})$, which contradicts the induction hypothesis. Thus it is only possible to be in case 7 if $\hat{\TP}^{*}(\tau_{i})>\TP^{*}(\tau_{i})$, and hence $N_B^{*}(\tau_i)>\hat{N}_B^{*}(\tau_i)$.

For case 8 note that $\hat{\TP}^{*}(\tau_i)>{\TP^{*}}(\tau_i)$, as $\sigma^{*}(\tau_i)<\hat{\sigma}^{*}(\tau_i)$. Thus, using equation \eqref{difTP}, $N_B^{*}(\tau_i)>\hat{N}_B^{*}(\tau_i)$ and, using equation \eqref{difR}, $N_V^{*}(\tau_i)<\hat{N}_V^{*}(\tau_i)$. The statement now follows as $\hat{\sigma}^{*}(\tau_{i+1})>\sigma^{*}(\tau_{i+1})$ only if $\hat{\TP}^{*}(\tau_{i+1})>\TP^{*}(\tau_{i+1})=0$.

For case 9 the statement follows immediately from the induction hypothesis.

Finally, it can be verified that the marginal distributions of $\{\TP^{*}(t),\sigma^{*}(t)\}_{t\geq 0}$ and $\{\hat{\TP}^{*}(t),\hat{\sigma}^{*}(t)\}_{t\geq 0}$ are the same as the distribution of $\{\TP(t),\sigma(t)\}_{t\geq 0}$ and $\{\hat{\TP}(t),\hat{\sigma}(t)\}_{t\geq 0}$.
\end{proof}

\begin{replemma}{fcoup}
For {\rm Scenario~\ref{scen2}}, and assuming that $\hat{f}(i) \leq f(i)$, $\DP(i)=1$, $i\geq 0$, $\hat{\TP}(0)=\TP(0)$ and $\hat{\sigma}(0)=\sigma(0)=0$, $\{\hat{\TP}(t)\}_{t\geq 0} \geq_{{\rm st}} \{\TP(t)\}_{t\geq 0}$.
\end{replemma}
\begin{proof}
The proof of this lemma proceeds along similar lines as the proof of Lemma~\ref{gcoup}. That is, we will construct a coupling $(\{\TP^{*}(t),\sigma^{*}(t)\}_{t\geq 0},\{\hat{\TP}^{*}(t),\hat{\sigma}^{*}(t)\}_{t\geq 0})$ between $\{\TP(t),\sigma(t)\}_{t\geq 0}$ and $\{\hat{\TP}(t),\hat{\sigma}(t)\}_{t\geq 0}$ such that $\hat{\TP}^{*}(t)\geq \TP^{*}(t)$ for all $t\geq 0$. The result then follows.

We will construct the sample path of the coupled systems recursively such that, marginally, this sample path obeys the same probabilistic laws as the original process. Further we make sure that arrivals in both systems happen at the same time and that the $n$-th service takes the same amount of time in both systems. Finally we make sure that the system with activation rate $f(\cdot)$ always activates if the system with activation rate $\hat{f}(\cdot)$ activates, if the total number of customers in both systems is equal and both systems are de-activated. This will ensure that $\hat{\TP}^{*}(t)\geq \TP^{*}(t)$ for all $t\geq 0$ as both systems always de-activate after one customer is served. Now we will formally construct this coupling.

Let $A$, $B$ and $V$ be (infinite) vectors of realizations of independent random variables, where $A_i$ is exponentially distributed with parameter $\lambda$, $B_i$ is generally distributed with distribution function $F_B(\cdot)$ and $V_i$ is exponentially distributed with parameter $1$. Further let $N_B(t)$ be the total number of service completions of the process belonging to $f(\cdot)$. Similarly, let $\hat{N}_B(t)$ be the total number of service completions of the process belonging to $\hat{f}(\cdot)$.

Denote by $R(t)$ the remaining time until an activation or service completion in the process belonging to $f(\cdot)$ at time $t$ and, similarly, denote by $\hat{R}(t)$ the remaining time until an activation or service completion in the process belonging to $\hat{f}(\cdot)$. We make arrivals occur simultaneously in both processes and denote by $J(t)$ the remaining time until an arrival.

We will construct a coupling such that $\hat{\TP}^{*}(t)>\TP^{*}(t)$, or $\hat{\TP}^{*}(t)=\TP^{*}(t)$ and $0=\hat{\sigma}^{*}(t)<\sigma^{*}(t)=1$, or $\hat{\TP}^{*}(t)=\TP^{*}(t)$, $\hat{\sigma}^{*}(t)=\sigma^{*}(t)$ and $\hat{R}(t)\geq R(t)$, for all $t\geq 0$.

Define the jump epochs $0\equiv \tau_0 < \tau_1 < \dots$. The jump epochs and the coupling are constructed recursively and we inductively prove the statement in the construction of this coupling. First take $\TP^{*}(\tau_0)=\TP(\tau_0)$, $\sigma^{*}(\tau_0)=\sigma(\tau_0)$, $\hat{\TP}^{*}(\tau_0)=\hat{\TP}(\tau_0)$ and $\hat{\sigma}^{*}(\tau_0)=\hat{\sigma}(\tau_0)$ and note that $\hat{\TP}^{*}(\tau_0) = \TP^{*}(\tau_0)$ and $\hat{\sigma}^{*}(\tau_0)=\sigma^{*}(\tau_0)$ by assumption. Further set $\hat{N}_B^{*}(\tau_0)=N_B^{*}(\tau_0)=0$, $J(\tau_0)=A_1$ and $R(\tau_0)=\hat{R}(\tau_0)=V_1/f(\TP^{*}(\tau_0))$, where $1/0\equiv\infty$.

Now assume $\hat{\TP}^{*}(\tau_i)>\TP^{*}(\tau_i)$, or $\hat{\TP}^{*}(\tau_i)=\TP^{*}(\tau_i)$ and $0=\hat{\sigma}^{*}(\tau_i)<\sigma^{*}(\tau_i)=1$, or $\hat{\TP}^{*}(\tau_i)=\TP^{*}(\tau_i)$, $\hat{\sigma}^{*}(\tau_i)=\sigma^{*}(\tau_i)$ and $\hat{R}(\tau_i)\geq R(\tau_i)$, for some $i \in \mathbb{N}_0$. Set $\tau_{i+1}=\tau_i+\min\{R(\tau_i),\hat{R}(\tau_i),J(\tau_i)\}$ and $\TP^{*}(t)=\TP^{*}(\tau_i)$, $\hat{\TP}^{*}(t)=\hat{\TP}^{*}(\tau_i)$, $\sigma^{*}(t)=\sigma^{*}(\tau_i)$, $\hat{\sigma}^{*}(t)=\hat{\sigma}^{*}(\tau_i)$, $R(t)=R(\tau_i)-t+\tau_i$ and $\hat{R}(t)=\hat{R}(\tau_i)-t+\tau_i$ for all $t\in(\tau_i,\tau_{i+1})$. So, by the induction hypothesis, $\hat{\TP}^{*}(t)>\TP^{*}(t)$, or $\hat{\TP}^{*}(t)=\TP^{*}(t)$ and $0=\hat{\sigma}^{*}(t)<\sigma^{*}(t)=1$, or $\hat{\TP}^{*}(t)=\TP^{*}(t)$, $\hat{\sigma}^{*}(t)=\sigma^{*}(t)$ and $\hat{R}(t)\geq R(t)$ for all $\tau_i\leq t < \tau_{i+1}$. To define the values at time $\tau_{i+1}$ we distinguish seven cases.

Case 1: $J(\tau_i)=\min\{R(\tau_i),\hat{R}(\tau_i),J(\tau_i)\}$. Set $\TP^{*}(\tau_{i+1})=\TP^{*}(\tau_{i})+1$, $\hat{\TP}^{*}(\tau_{i+1})=\hat{\TP}^{*}(\tau_{i})+1$, $\sigma^{*}(\tau_{i+1})=\sigma^{*}(\tau_i)$, $\hat{\sigma}^{*}(\tau_{i+1})=\hat{\sigma}^{*}(\tau_i)$, $N_B^{*}(\tau_{i+1})=N_B^{*}(\tau_i)$ and $\hat{N}_B^{*}(\tau_{i+1})=\hat{N}_B^{*}(\tau_i)$. Further set $R(\tau_{i+1})=(R(\tau_i)-\tau_{i+1}+\tau_i)\sigma^{*}(\tau_{i+1})+(1-\sigma^{*}(\tau_{i+1}))V_{i+1}/f(\TP^{*}(\tau_{i+1}))$, $\hat{R}(\tau_{i+1})=(\hat{R}(\tau_i)-\tau_{i+1}+\tau_i)\hat{\sigma}^{*}(\tau_{i+1})+(1-\hat{\sigma}^{*}(\tau_{i+1}))V_{i+1}/\hat{f}(\hat{\TP}^{*}(\tau_{i+1}))$ and $J(\tau_{i+1})=A_{i+1}$.

Case 2: $R(\tau_i)=\min\{R(\tau_i),\hat{R}(\tau_i),J(\tau_i)\}$, $R(\tau_i)=\hat{R}(\tau_i)$ and $\sigma^{*}(\tau_i)=\hat{\sigma}^{*}(\tau_i)=0$. Set $\TP^{*}(\tau_{i+1})=\TP^{*}(\tau_{i})$, $\hat{\TP}^{*}(\tau_{i+1})=\hat{\TP}^{*}(\tau_{i})$, $\sigma^{*}(\tau_{i+1})=1$, $\hat{\sigma}^{*}(\tau_{i+1})=1$, $N_B^{*}(\tau_{i+1})=N_B^{*}(\tau_i)$ and $\hat{N}_B^{*}(\tau_{i+1})=\hat{N}_B^{*}(\tau_i)$. Further set $R(\tau_{i+1})=B_{N_{B}^{*}(\tau_{i+1})+1}$, $\hat{R}(\tau_{i+1})=B_{\hat{N}_{B}^{*}(\tau_{i+1})+1}$ and $J(\tau_{i+1})=J(\tau_{i})-\tau_{i+1}+\tau_i$.

Case 3: $R(\tau_i)=\min\{R(\tau_i),\hat{R}(\tau_i),J(\tau_i)\}$, $R(\tau_i)=\hat{R}(\tau_i)$ and $\sigma^{*}(\tau_i)=\hat{\sigma}^{*}(\tau_i)=1$. Set $\TP^{*}(\tau_{i+1})=\TP^{*}(\tau_{i})-1$, $\hat{\TP}^{*}(\tau_{i+1})=\hat{\TP}^{*}(\tau_{i})-1$, $\sigma^{*}(\tau_{i+1})=0$, $\hat{\sigma}^{*}(\tau_{i+1})=0$, $N_B^{*}(\tau_{i+1})=N_B^{*}(\tau_i)+1$ and $\hat{N}_B^{*}(\tau_{i+1})=\hat{N}_B^{*}(\tau_i)+1$. Further set $R(\tau_{i+1})=V_{i+1}/f(\TP^{*}(\tau_{i+1}))$, $\hat{R}(\tau_{i+1})=V_{i+1}/\hat{f}(\hat{\TP}^{*}(\tau_{i+1}))$ and $J(\tau_{i+1})=J(\tau_{i})-\tau_{i+1}+\tau_i$.

Case 4: $R(\tau_i)=\min\{R(\tau_i),\hat{R}(\tau_i),J(\tau_i)\}$, $R(\tau_i)<\hat{R}(\tau_i)$ and $\sigma^{*}(\tau_i)=0$. Set $\TP^{*}(\tau_{i+1})=\TP^{*}(\tau_{i})$, $\hat{\TP}^{*}(\tau_{i+1})=\hat{\TP}^{*}(\tau_{i})$, $\sigma^{*}(\tau_{i+1})=1$, $\hat{\sigma}^{*}(\tau_{i+1})=\hat{\sigma}^{*}(\tau_{i+1})$, $N_B^{*}(\tau_{i+1})=N_B^{*}(\tau_i)$ and $\hat{N}_B^{*}(\tau_{i+1})=\hat{N}_B^{*}(\tau_i)$. Further set $R(\tau_{i+1})=B_{N_{B}^{*}(\tau_{i+1})+1}$, $\hat{R}(\tau_{i+1})=\hat{R}(\tau_i)-\tau_{i+1}+\tau_i$ and $J(\tau_{i+1})=J(\tau_{i})-\tau_{i+1}+\tau_i$.

Case 5: $R(\tau_i)=\min\{R(\tau_i),\hat{R}(\tau_i),J(\tau_i)\}$, $R(\tau_i)<\hat{R}(\tau_i)$ and $\sigma^{*}(\tau_i)=1$. Set $\TP^{*}(\tau_{i+1})=\TP^{*}(\tau_{i})-1$, $\hat{\TP}^{*}(\tau_{i+1})=\hat{\TP}^{*}(\tau_{i})$, $\sigma^{*}(\tau_{i+1})=0$, $\hat{\sigma}^{*}(\tau_{i+1})=\hat{\sigma}^{*}(\tau_{i})$, $N_B^{*}(\tau_{i+1})=N_B^{*}(\tau_i)+1$ and $\hat{N}_B^{*}(\tau_{i+1})=\hat{N}_B^{*}(\tau_i)$. Further set $R(\tau_{i+1})=V_{i+1}/f(\TP^{*}(\tau_{i+1}))$, $\hat{R}(\tau_{i+1})=(\hat{R}(\tau_i)-\tau_{i+1}+\tau_i)\hat{\sigma}^{*}(\tau_{i+1})+(1-\hat{\sigma}^{*}(\tau_{i+1}))V_{i+1}/\hat{f}(\hat{\TP}^{*}(\tau_{i+1}))$ and $J(\tau_{i+1})=J(\tau_{i})-\tau_{i+1}+\tau_i$.

Case 6: $\hat{R}(\tau_i)=\min\{R(\tau_i),\hat{R}(\tau_i),J(\tau_i)\}$, $R(\tau_i)>\hat{R}(\tau_i)$ and $\hat{\sigma}^{*}(\tau_i)=0$. Set $\TP^{*}(\tau_{i+1})=\TP^{*}(\tau_{i})$, $\hat{\TP}^{*}(\tau_{i+1})=\hat{\TP}^{*}(\tau_{i})$, $\sigma^{*}(\tau_{i+1})=\sigma^{*}(\tau_{i})$, $\hat{\sigma}^{*}(\tau_{i+1})=1$, $N_B^{*}(\tau_{i+1})=N_B^{*}(\tau_i)$ and $\hat{N}_B^{*}(\tau_{i+1})=\hat{N}_B^{*}(\tau_i)$. Further set $R(\tau_{i+1})=R(\tau_i)-\tau_{i+1}+\tau_i$, $\hat{R}(\tau_{i+1})=B_{\hat{N}_{B}^{*}(\tau_{i+1})+1}$ and $J(\tau_{i+1})=J(\tau_{i})-\tau_{i+1}+\tau_i$.

Case 7: $\hat{R}(\tau_i)=\min\{R(\tau_i),\hat{R}(\tau_i),J(\tau_i)\}$, $R(\tau_i)>\hat{R}(\tau_i)$ and $\hat{\sigma}^{*}(\tau_i)=1$. Set $\TP^{*}(\tau_{i+1})=\TP^{*}(\tau_{i})$, $\hat{\TP}^{*}(\tau_{i+1})=\hat{\TP}^{*}(\tau_{i})-1$, $\sigma^{*}(\tau_{i+1})=\sigma^{*}(\tau_{i})$, $\hat{\sigma}^{*}(\tau_{i+1})=0$, $N_B^{*}(\tau_{i+1})=N_B^{*}(\tau_i)$ and $\hat{N}_B^{*}(\tau_{i+1})=\hat{N}_B^{*}(\tau_i)+1$. Further set $R(\tau_{i+1})=(R(\tau_i)-\tau_{i+1}+\tau_i)\sigma^{*}(\tau_{i+1})+(1-\sigma^{*}(\tau_{i+1}))V_{i+1}/f(\TP^{*}(\tau_{i+1}))$, $\hat{R}(\tau_{i+1})=V_{i+1}/\hat{f}(\hat{\TP}^{*}(\tau_{i+1}))$ and $J(\tau_{i+1})=J(\tau_{i})-\tau_{i+1}+\tau_i$.

Note that the remaining cases occur with probability zero as the random variables in the vectors $A$ and $V$ are exponentially distributed.

From the sample path construction we can deduce that
\begin{equation}
\label{difTP2}
\hat{\TP}^{*}(\tau_i)-{\TP^{*}}(\tau_i)=N_B^{*}(\tau_i)-\hat{N}_B^{*}(\tau_i).
\end{equation}
Thus $\hat{\TP}^{*}(\tau_i)={\TP^{*}}(\tau_i)$ if and only if $N_B^{*}(\tau_i)=\hat{N}_B^{*}(\tau_i)$.

We now need to prove that $\hat{\TP}^{*}(\tau_{i+1})>\TP^{*}(\tau_{i+1})$, or $\hat{\TP}^{*}(\tau_{i+1})=\TP^{*}(\tau_{i+1})$ and $0=\hat{\sigma}^{*}(\tau_{i+1})<\sigma^{*}(\tau_{i+1})=1$, or $\hat{\TP}^{*}(\tau_{i+1})=\TP^{*}(\tau_{i+1})$, $\hat{\sigma}^{*}(\tau_{i+1})=\sigma^{*}(\tau_{i+1})$ and $\hat{R}(\tau_{i+1})\geq R(\tau_{i+1})$ in all seven cases.

For case 1 it follows immediately that $\hat{\TP}^{*}(\tau_{i+1})>\TP^{*}(\tau_{i+1})$ if $\hat{\TP}^{*}(\tau_{i})>\TP^{*}(\tau_{i})$. We also find immediately that $\hat{\TP}^{*}(\tau_{i+1})=\TP^{*}(\tau_{i+1})$ and $0=\hat{\sigma}^{*}(\tau_{i+1})<\sigma^{*}(\tau_{i+1})=1$ if $\hat{\TP}^{*}(\tau_{i})=\TP^{*}(\tau_{i})$ and $0=\hat{\sigma}^{*}(\tau_{i})<\sigma^{*}(\tau_{i})=1$. Further, we see that $\hat{\TP}^{*}(\tau_{i+1})= \TP^{*}(\tau_{i+1})$, $\hat{\sigma}^{*}(\tau_{i+1})=\sigma^{*}(\tau_{i+1})=1$ and $\hat{R}(\tau_{i+1})\geq R(\tau_{i+1})$ if $\hat{\TP}^{*}(\tau_{i})= \TP^{*}(\tau_{i})$, $\hat{\sigma}^{*}(\tau_{i})=\sigma^{*}(\tau_{i})=1$ and $\hat{R}(\tau_{i})\geq R(\tau_{i})$. Finally, if $\hat{\TP}^{*}(\tau_{i})= \TP^{*}(\tau_{i})$ and $\hat{\sigma}^{*}(\tau_{i})=\sigma^{*}(\tau_{i})=0$ we get $\hat{\TP}^{*}(\tau_{i+1})= \TP^{*}(\tau_{i+1})$, $\hat{\sigma}^{*}(\tau_{i+1})=\sigma^{*}(\tau_{i+1})=0$ and $\hat{R}(\tau_{i+1})\geq R(\tau_{i+1})$ as $\hat{f}(\hat{\TP}(\tau_{i+1})) \leq f(\TP(\tau_{i+1}))$.

For case 2 recall that $N_B^{*}(\tau_i)=\hat{N}_B^{*}(\tau_i)$ if $\hat{\TP}^{*}(\tau_i)={\TP^{*}}(\tau_i)$, as follows from equation~\eqref{difTP2}. Thus $\hat{R}(\tau_{i+1})= R(\tau_{i+1})$ if $\hat{\TP}^{*}(\tau_{i+1})= \TP^{*}(\tau_{i+1})$. The statement now follows.

For case 3 note that $\hat{f}(\hat{\TP}(\tau_{i+1})) \leq f(\TP(\tau_{i+1}))$ if $\hat{\TP}^{*}(\tau_{i+1})= \TP^{*}(\tau_{i+1})$, which gives $\hat{R}(\tau_{i+1})\geq R(\tau_{i+1})$ in that situation. For all other situations the statement is easily seen to hold.

For case 4 it follows immediately that $\hat{\TP}^{*}(\tau_{i+1})-\hat{\sigma}^{*}(\tau_{i+1})>\TP^{*}(\tau_{i+1})-\sigma^{*}(\tau_{i+1})$. Therefore, $\hat{\TP}^{*}(\tau_{i+1})>\TP^{*}(\tau_{i+1})$, or $\hat{\TP}^{*}(\tau_{i+1})=\TP^{*}(\tau_{i+1})$ and $0=\hat{\sigma}^{*}(\tau_{i+1})<\sigma^{*}(\tau_{i+1})=1$.

For case 5 we get $\hat{\TP}^{*}(\tau_{i+1})>\TP^{*}(\tau_{i+1})$, as follows immediately from the induction hypothesis.

For case 6 note that either $\hat{\TP}^{*}(\tau_{i})>\TP^{*}(\tau_{i})$, or $\hat{\TP}^{*}(\tau_{i})=\TP^{*}(\tau_{i})$ and $0=\hat{\sigma}^{*}(\tau_{i})<\sigma^{*}(\tau_{i})=1$. If $\hat{\TP}^{*}(\tau_{i})>\TP^{*}(\tau_{i})$ we get $\hat{\TP}^{*}(\tau_{i+1})>\TP^{*}(\tau_{i+1})$. Further, if $\hat{\TP}^{*}(\tau_{i})=\TP^{*}(\tau_{i})$ and $0=\hat{\sigma}^{*}(\tau_{i})<\sigma^{*}(\tau_{i})=1$ we get $\hat{\TP}^{*}(\tau_{i+1})=\TP^{*}(\tau_{i+1})$, $\hat{\sigma}^{*}(\tau_{i})<\sigma^{*}(\tau_{i})=1$ and, as $N_B^{*}(\tau_i)=\hat{N}_B^{*}(\tau_i)$ by equation~\eqref{difTP2}, $R(\tau_{i+1})\leq B_{\hat{N}_{B}^{*}(\tau_{i+1})+1} - \tau_{i+1} + \tau_i \leq B_{\hat{N}_{B}^{*}(\tau_{i+1})+1} = \hat{R}(\tau_{i+1})$.

For case 7 note that $\hat{\TP}^{*}(\tau_{i})>\TP^{*}(\tau_{i})$. If $\sigma^{*}(\tau_{i+1})=1$ the statement immediately follows. If $\sigma^{*}(\tau_{i+1})=0$ and $\hat{\TP}^{*}(\tau_{i+1})=\TP^{*}(\tau_{i+1})$ we find $\hat{R}(\tau_{i+1})\geq R(\tau_{i+1})$ as $\hat{f}(\hat{\TP}(\tau_{i+1})) \leq f(\TP(\tau_{i+1}))$.

Finally, it can be verified that the marginal distributions of $\{\TP^{*}(t),\sigma^{*}(t)\}_{t\geq 0}$ and $\{\hat{\TP}^{*}(t),\hat{\sigma}^{*}(t)\}_{t\geq 0}$ are the same as the distribution of $\{\TP(t),\sigma(t)\}_{t\geq 0}$ and $\{\hat{\TP}(t),\hat{\sigma}(t)\}_{t\geq 0}$. For this note that the exponential distribution is memoryless and that $k W \sim {\rm Exp}(\beta/k)$ if $W \sim {\rm Exp}(\beta)$.
\end{proof}

\begin{lemma}
If $\alpha y + (1 - \alpha) z = \alpha' y' + (1 - \alpha') z'$, with
$0 \leq \alpha, \alpha' \leq 1$ and $y' \leq y \leq z \leq z'$, then

${\rm (i)}$ If $g(\cdot)$ is a concave function,
\begin{equation}
\label{lem1gconc}
\alpha g(y) + (1 - \alpha) g(z) \geq \alpha' g(y') + (1 - \alpha') g(z').
\end{equation}

${\rm (ii)}$ If $g(\cdot)$ is a convex function,
\begin{equation}
\label{lem1gconv}
\alpha g(y) + (1 - \alpha) g(z) \leq \alpha' g(y') + (1 - \alpha') g(z').
\end{equation}
\end{lemma}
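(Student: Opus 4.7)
The plan is to recognize that the lemma is a two-point instance of the standard fact that increasing a mean-preserving spread preserves the inequality direction for concave/convex functions. Concretely, let $\mu = \alpha y + (1-\alpha) z = \alpha' y' + (1-\alpha') z'$. Both sides of the hypothesis describe the mean of a two-point distribution with the same mean $\mu$, but the support $\{y',z'\}$ on the right contains the support $\{y,z\}$ on the left since $y' \leq y \leq z \leq z'$. I would therefore rewrite each of the inner points $y$ and $z$ as explicit convex combinations of the outer points $y'$ and $z'$, apply concavity (resp.\ convexity) pointwise, and then check that the resulting coefficients reassemble into $\alpha'$ and $1-\alpha'$ by using the mean-preservation constraint.

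First I would dispose of the degenerate case $y' = z'$: then $\mu = y' = z'$ forces $y = z = y' = z'$ and both sides agree. Assuming $y' < z'$, define
\[
\beta_y = \frac{z'-y}{z'-y'}, \qquad \beta_z = \frac{z'-z}{z'-y'},
\]
both of which lie in $[0,1]$ because $y' \leq y \leq z \leq z'$. By construction $y = \beta_y y' + (1-\beta_y) z'$ and $z = \beta_z y' + (1-\beta_z) z'$.

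For case (i), concavity of $g$ gives $g(y) \geq \beta_y g(y') + (1-\beta_y) g(z')$ and $g(z) \geq \beta_z g(y') + (1-\beta_z) g(z')$. Taking the $\alpha,(1-\alpha)$ combination yields
\[
\alpha g(y) + (1-\alpha) g(z) \geq \bigl[\alpha\beta_y + (1-\alpha)\beta_z\bigr] g(y') + \bigl[\alpha(1-\beta_y) + (1-\alpha)(1-\beta_z)\bigr] g(z').
\]
A direct computation, using $\alpha y + (1-\alpha) z = \mu$ and $\mu = \alpha' y' + (1-\alpha') z'$, gives
\[
\alpha\beta_y + (1-\alpha)\beta_z = \frac{z' - \mu}{z'-y'} = \alpha',
\]
and hence the complementary coefficient equals $1-\alpha'$, establishing~\eqref{lem1gconc}. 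Case (ii) follows by the same argument with all inequalities reversed, or equivalently by applying case (i) to $-g$.

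There is no real obstacle here; the only subtlety is the arithmetic identification of the mixing weights with $\alpha'$, but this is exactly what the mean-preservation hypothesis $\alpha y + (1-\alpha) z = \alpha' y' + (1-\alpha') z'$ supplies. The short proof above can be written out in a few lines.
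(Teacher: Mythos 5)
Your proof is correct and follows essentially the same route as the paper's: both express the inner points $y,z$ as convex combinations of the outer points $y',z'$, apply concavity pointwise, and verify that the mixing weights reassemble into $\alpha'$ and $1-\alpha'$ via the mean-preservation hypothesis. The only differences are cosmetic — you write the weights explicitly and treat the degenerate case $y'=z'$ separately, whereas the paper asserts their existence abstractly.
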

\begin{proof}
Since $y' \leq y \leq z \leq z'$, there exist
$0 \leq \alpha_y, \alpha_z \leq 1$, such that
$y = \alpha_y y' + (1 - \alpha_y) z'$,
and $z = \alpha_z y' + (1 - \alpha_z) z'$.
It follows from the equality
$\alpha y + (1 - \alpha) z = \alpha' y' + (1 - \alpha') z'$ that
$\alpha' = \alpha \alpha_y + (1 - \alpha) \alpha_z$,
and $1 - \alpha' = \alpha (1 - \alpha_y) + (1 - \alpha) (1 - \alpha_z)$.
Further, if $g(\cdot)$ is concave,
\[
\alpha_y g(y') + (1 - \alpha_y) g(z') \leq g(y),
\]
and
\[
\alpha_z g(y') + (1 - \alpha_z) g(z') \leq g(z).
\]
We may then write
\begin{align*}
\alpha g(y) + (1 - \alpha) g(z)
&\geq \alpha [\alpha_y g(y') + (1 - \alpha_y) g(z')] +
(1 - \alpha) [\alpha_z g(y') + (1 - \alpha_z) g(z')] \\
&= [\alpha \alpha_y + (1 - \alpha) \alpha_z] g(y') +
[\alpha (1 - \alpha_y) + (1 - \alpha) (1 - \alpha_z)] g(z') \\
&= \alpha' g(y') + (1 - \alpha') g(z'),
\end{align*}
which completes the proof for case~${\rm (i)}$.
The inequality in~\eqref{lem1gconv} follows by symmetry.
\end{proof}

\begin{corollary}
\label{kappainc}
For all~$x$, if $a' \leq a < 1$, $b' \geq b > 1$, then

${\rm (i)}$ If $g(\cdot)$ is a concave function,
$\gamma_{a', b'}(x) \leq \gamma_{a, b}(x) \leq 1$ and thus
$\kappa_{a', b'} \geq \kappa_{a, b} \geq 0$.

${\rm (ii)}$ If $g(\cdot)$ is a convex function,
$\gamma_{a', b'}(x) \geq \gamma_{a, b}(x) \geq 1$ and thus
$\chi_{a', b'} \leq \chi_{a, b} \leq 0$.
\end{corollary}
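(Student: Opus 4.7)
The plan is to derive this corollary as a direct application of the preceding lemma. The first key observation is that the coefficients appearing in $\gamma_{a,b}(x)$ are engineered so that
\[
\frac{b-1}{b-a} \cdot ax + \frac{1-a}{b-a} \cdot bx = x,
\]
that is, the convex combination of $ax$ and $bx$ with weights $\alpha=\frac{b-1}{b-a}$ and $1-\alpha=\frac{1-a}{b-a}$ (both nonnegative since $a<1<b$) equals $x$. Thus $\gamma_{a,b}(x)$ is exactly $[\alpha g(ax)+(1-\alpha)g(bx)]/g(x)$.

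For the inequalities against $1$, I would apply Jensen's inequality (which is the special case of the preceding lemma with $y'=y=ax$, $z'=z=bx$, $\alpha'=\alpha$). If $g$ is concave then $\alpha g(ax)+(1-\alpha)g(bx)\le g(x)$, giving $\gamma_{a,b}(x)\le 1$; if $g$ is convex the reverse inequality holds, giving $\gamma_{a,b}(x)\ge 1$. For the monotonicity in $(a,b)$, set $y=ax$, $z=bx$, $y'=a'x$, $z'=b'x$, with $\alpha=\frac{b-1}{b-a}$ and $\alpha'=\frac{b'-1}{b'-a'}$. Since $a'\le a<1<b\le b'$ and $x\ge0$, one has $y'\le y\le z\le z'$, and the computation above shows $\alpha y+(1-\alpha)z=x=\alpha' y'+(1-\alpha')z'$. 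The preceding lemma then yields, for concave $g$,
\[
\alpha g(ax)+(1-\alpha)g(bx)\ \ge\ \alpha' g(a'x)+(1-\alpha')g(b'x),
\]
and dividing through by $g(x)>0$ gives $\gamma_{a,b}(x)\ge\gamma_{a',b'}(x)$; the convex case follows by the symmetric inequality in that lemma.

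Finally, to obtain the statements for $\kappa_{a,b}$ and $\chi_{a,b}$, I would take suprema (respectively infima) over $x$ in the pointwise inequalities just established. Concretely, $\gamma_{a',b'}(x)\le\gamma_{a,b}(x)\le1$ for all $x$ implies $\sup_x\gamma_{a',b'}(x)\le\sup_x\gamma_{a,b}(x)\le1$, hence $\kappa_{a',b'}\ge\kappa_{a,b}\ge0$; the companion chain for $\chi$ in the convex case is analogous.

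The only potentially delicate point is checking that $g(x)>0$ so that division by $g(x)$ is legitimate and the direction of the inequality is preserved; in the applications in the proof of Theorem~\ref{degenerate}, $g$ is either $\DP(\cdot)=(1+\cdot)^{-\alpha}$ or $f(\cdot)=\nu(\cdot)^\alpha$, both strictly positive on the relevant domain, so this presents no real obstacle. Beyond that, the argument is purely a bookkeeping application of the preceding lemma, with no additional analytic input required.
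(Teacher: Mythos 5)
Your proof is correct and follows essentially the same route as the paper's: the same substitutions $y=ax$, $z=bx$, $y'=a'x$, $z'=b'x$, $\alpha=(b-1)/(b-a)$, $\alpha'=(b'-1)/(b'-a')$ into the preceding lemma give the monotonicity, Jensen gives the comparison with $1$, and taking suprema/infima over $x$ yields the claims for $\kappa$ and $\chi$. (One tiny slip: Jensen is the special case of the lemma with $y=z=x$ and $y'=ax$, $z'=bx$, not with $y'=y=ax$, $z'=z=bx$, which would be a trivial identity; this does not affect the argument.)
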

\begin{proof}
Taking $y = a x$, $y' = a' x$, $z = b x$, $z' = b' x$,
$\alpha = (b - 1) / (b - a)$, and $\alpha' = (b' - 1) / (b' - a')$
in~\eqref{lem1gconc}, we obtain for $g(\cdot)$ concave,
\begin{align*}
\frac{(b - 1) g(a x) + (1 - a) g(b x)}{b - a}
&= \alpha g(y) + (1 - \alpha) g(z)\\
&\geq \alpha' g(y') + (1 - \alpha') g(z')
= \frac{(b' - 1) g(a' x) + (1 - a') g(b' x)}{b' - a'},
\end{align*}
which yields the statement for concave $g(\cdot)$.

The assertion for convex $g(\cdot)$ follows by symmetry.
\end{proof}
Let $W$ henceforth be a nonnegative integer-valued random variable
with probability distribution $p(x) = \pr{W = x}$.
For any $y \geq 0$, define
$F(y) = \pr{W \leq y} = \pr{W \leq \lfloor y \rfloor}$, with pseudo inverse
\[
F^{- 1}(u) = \inf\{y: F(y) \geq u\}
\]
for any $u \in [0, 1]$, so that we may write
\[
\expect{g(W)} = \int_{u = 0}^{1} g(F^{- 1}(u)) du,
\]
and in particular
\[
\expect{W} = \int_{u = 0}^{1} F^{- 1}(u) du.
\]
For compactness, denote $\hat{F}^{- 1}(u) = F^{- 1}(u) / \expect{W}$,
\[
x_1(\epsilon_1) =
\frac{1}{\epsilon_1} \int_{u = 0}^{\epsilon_1} \hat{F}^{- 1}(u) du,
\]
and
\[
x_2(\epsilon_2) =
\frac{1}{\epsilon_2} \int_{u = 1 - \epsilon_2}^{1} \hat{F}^{- 1}(u) du.
\]
\begin{lemma}
\label{lem3}
Let $0 < \epsilon_1 \leq F(\expect{W})$,
$0 < \epsilon_2 \leq 1 - F(\expect{W})$, so that
$x_1(\epsilon_1) \leq \hat{F}^{- 1}(\epsilon_1) \leq 1$
and $x_2(\epsilon_2) \geq \hat{F}^{- 1}(1 - \epsilon_2) \geq 1$, with
\[
\epsilon_1 x_1(\epsilon_1) + \epsilon_2 x_2(\epsilon_2) =
\epsilon_1 + \epsilon_2,
\]
or equivalently,
\[
\int_{u = \epsilon_1}^{1 - \epsilon_2} \hat{F}^{- 1}(u) du =
1 - \epsilon_1 - \epsilon_2.
\]

${\rm (i)}$ If $g(\cdot)$ is a concave function,
\begin{equation}
\label{lem3gconc}
(\epsilon_1 + \epsilon_2) \kappa_{x_1(\epsilon_1), x_2(\epsilon_2)} \leq
1 - \frac{\expect{g(W)}}{g(\expect{W})}.
\end{equation}

${\rm (ii)}$ If $g(\cdot)$ is a convex function,
\begin{equation}
\label{lem3gconv}
(\epsilon_1 + \epsilon_2) \chi_{x_1(\epsilon_1), x_2(\epsilon_2)} \geq
1 - \frac{\expect{g(W)}}{g(\expect{W})}.
\end{equation}
\end{lemma}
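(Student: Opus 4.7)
The plan is to decompose $\expect{g(W)} = \int_0^1 g(F^{-1}(u))\, du$ into three contiguous pieces on the intervals $[0,\epsilon_1]$, $[\epsilon_1, 1-\epsilon_2]$, $[1-\epsilon_2, 1]$, apply Jensen's inequality to each piece, and then recognize the resulting convex combination as $(\epsilon_1+\epsilon_2)\,\gamma_{x_1(\epsilon_1),x_2(\epsilon_2)}(\expect{W})$ plus the obvious middle term.

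More concretely, I would first note that by construction $\int_0^{\epsilon_1} \hat{F}^{-1}(u)\,du = \epsilon_1 x_1(\epsilon_1)$, $\int_{1-\epsilon_2}^{1}\hat{F}^{-1}(u)\,du = \epsilon_2 x_2(\epsilon_2)$, and $\int_{\epsilon_1}^{1-\epsilon_2}\hat{F}^{-1}(u)\,du = 1-\epsilon_1-\epsilon_2$. Applying Jensen's inequality to each of the three integrals (the direction depending on whether $g$ is concave or convex) yields, in the concave case,
\[
\expect{g(W)} \leq \epsilon_1\,g(x_1(\epsilon_1)\expect{W}) + (1-\epsilon_1-\epsilon_2)\,g(\expect{W}) + \epsilon_2\,g(x_2(\epsilon_2)\expect{W}),
\]
with the reverse inequality in the convex case. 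Dividing through by $g(\expect{W})$ reduces the lemma to relating the expression $\epsilon_1\,g(x_1\expect{W})/g(\expect{W}) + \epsilon_2\,g(x_2\expect{W})/g(\expect{W})$ to $\gamma_{x_1,x_2}(\expect{W})$.

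The key algebraic observation is that the given balance condition $\epsilon_1 x_1 + \epsilon_2 x_2 = \epsilon_1 + \epsilon_2$ (written $x_1 = x_1(\epsilon_1)$, $x_2 = x_2(\epsilon_2)$) is equivalent to $\epsilon_1(1-x_1) = \epsilon_2(x_2-1)$, which forces
\[
\frac{\epsilon_1}{\epsilon_1+\epsilon_2} = \frac{x_2-1}{x_2-x_1}, \qquad \frac{\epsilon_2}{\epsilon_1+\epsilon_2} = \frac{1-x_1}{x_2-x_1}.
\]
Substituting this identity into the two boundary terms gives exactly
\[
\epsilon_1\,g(x_1\expect{W}) + \epsilon_2\,g(x_2\expect{W}) = (\epsilon_1+\epsilon_2)\,\gamma_{x_1,x_2}(\expect{W})\,g(\expect{W}),
\]
so the Jensen bound rearranges to $1 - \expect{g(W)}/g(\expect{W}) \geq (\epsilon_1+\epsilon_2)(1-\gamma_{x_1,x_2}(\expect{W}))$ in the concave case and the reverse in the convex case.

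To finish, I bound $1-\gamma_{x_1,x_2}(\expect{W})$ by the worst-case defect over $x$: in the concave case $1-\gamma_{x_1,x_2}(\expect{W}) \geq 1 - \sup_x \gamma_{x_1,x_2}(x) = \kappa_{x_1,x_2}$, yielding \eqref{lem3gconc}, and in the convex case $1-\gamma_{x_1,x_2}(\expect{W}) \leq 1-\inf_x \gamma_{x_1,x_2}(x) = \chi_{x_1,x_2}$, yielding \eqref{lem3gconv}. The only mild technical point will be to verify cleanly that the hypotheses $x_1(\epsilon_1) \leq 1 \leq x_2(\epsilon_2)$ (needed so that $\gamma_{x_1,x_2}$ and $\kappa_{x_1,x_2}$, $\chi_{x_1,x_2}$ are defined in the regime of the earlier lemmas) follow from $\epsilon_1 \leq F(\expect{W})$ and $\epsilon_2 \leq 1-F(\expect{W})$ together with monotonicity of $\hat{F}^{-1}$; this, however, is immediate from the fact that averages of $\hat{F}^{-1}$ over $[0,\epsilon_1]$ and $[1-\epsilon_2,1]$ lie on the appropriate side of $1 = \hat{F}^{-1}(F(\expect{W}))$. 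The main substance is the algebraic identification above, not any hard estimate.
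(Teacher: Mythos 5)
Your proposal is correct and follows essentially the same route as the paper's proof: the same three-piece decomposition of $\expect{g(W)}=\int_0^1 g(F^{-1}(u))\,du$, Jensen's inequality on each piece, the identification $\epsilon_1 g(x_1\expect{W})+\epsilon_2 g(x_2\expect{W})=(\epsilon_1+\epsilon_2)\gamma_{x_1,x_2}(\expect{W})g(\expect{W})$ via the balance condition, and the final passage to $\kappa$ (resp.\ $\chi$) through the supremum (resp.\ infimum) over $x$. The only difference is that you spell out the algebra behind the weight identity, which the paper states as a one-line equality.
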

\begin{proof}
Write
\begin{equation}
\label{expgX}
\expect{g(W)} =
\int_{u = 0}^{\epsilon_1} g(F^{- 1}(u)) du +
\int_{u = \epsilon_1}^{1 - \epsilon_2} g(F^{- 1}(u)) du +
\int_{u = 1 - \epsilon_2}^{1} g(F^{- 1}(u)) du.
\end{equation}

Because of Jensen's inequality we find for concave $g(\cdot)$
\[
\int_{u = \epsilon_1}^{1 - \epsilon_2} g(F^{- 1}(u)) du \leq
(1 - \epsilon_1 - \epsilon_2) g\left(\frac{1}{1 - \epsilon_1 - \epsilon_2}
\int_{u = \epsilon_1}^{1 - \epsilon_2} F^{- 1}(u) du\right) =
(1 - \epsilon_1 - \epsilon_2) g(\expect{W}).
\]

Invoking Jensen's inequality once again,
\begin{eqnarray*}
& &
\int_{u = 0}^{\epsilon_1} g(F^{- 1}(u)) du +
\int_{u = 1 - \epsilon_2}^{1} g(F^{- 1}(u)) du \\
&\leq&
\epsilon_1 g\left(\frac{1}{\epsilon_1}
\int_{u = 0}^{\epsilon_1} F^{- 1}(u) du\right) +
\epsilon_2 g\left(\frac{1}{\epsilon_2}
\int_{u = 1 - \epsilon_2}^{1} F^{- 1}(u) du\right) \\
&=&
\epsilon_1 g(x_1(\epsilon_1) \expect{W}) +
\epsilon_2 g(x_2(\epsilon_2) \expect{W}) \\
&=&
\gamma_{x_1(\epsilon_1), x_2(\epsilon_2)}(\expect{W})
(\epsilon_1 + \epsilon_2) g(\expect{W}) \\
&\leq&
(1 - \kappa_{x_1(\epsilon_1), x_2(\epsilon_2)}) (\epsilon_1 + \epsilon_2)
g(\expect{W}).
\end{eqnarray*}

Substituting the above two inequalities in~\eqref{expgX} we obtain the statement of
the lemma for concave $g(\cdot)$. The assertion for convex $g(\cdot)$ follows from symmetry.
\end{proof}

\begin{lemma}
\label{lem4}
Let $0 < \epsilon_1 \leq F(\expect{W})$,
$0 < \epsilon_2 \leq 1 - F(\expect{W})$, so that
$x_1(\epsilon_1) \leq \hat{F}^{- 1}(\epsilon_1) \leq 1$
and $x_2(\epsilon_2) \geq \hat{F}^{- 1}(1 - \epsilon_2) \geq 1$, with
\[
\epsilon_1 x_1(\epsilon_1) + \epsilon_2 x_2(\epsilon_2) =
\epsilon_1 + \epsilon_2,
\]
or equivalently,
\[
\int_{u = \epsilon_1}^{1 - \epsilon_2} \hat{F}^{- 1}(u) du =
1 - \epsilon_1 - \epsilon_2.
\]

${\rm (i)}$ If $g(\cdot)$ is a concave function,
\[
\kappa_{x_1(\epsilon_1), x_2(\epsilon_2)} \geq
\max\{\kappa_{\hat{F}^{- 1}(\epsilon_1),
1 + \frac{\epsilon_1}{\epsilon_2} (1 - \hat{F}^{- 1}(\epsilon_1))},
\kappa_{1 - \frac{\epsilon_2}{\epsilon_1} (\hat{F}^{- 1}(1 - \epsilon_2) - 1),
\hat{F}^{- 1}(1 - \epsilon_2)}\}.
\]

${\rm (ii)}$ If $g(\cdot)$ is a convex function,
\[
\chi_{x_1(\epsilon_1), x_2(\epsilon_2)} \leq
\min\{\chi_{\hat{F}^{- 1}(\epsilon_1),
1 + \frac{\epsilon_1}{\epsilon_2} (1 - \hat{F}^{- 1}(\epsilon_1))},
\chi_{1 - \frac{\epsilon_2}{\epsilon_1} (\hat{F}^{- 1}(1 - \epsilon_2) - 1),
\hat{F}^{- 1}(1 - \epsilon_2)}\}.
\]
\end{lemma}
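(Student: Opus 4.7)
The plan is to reduce the statement to two direct applications of Corollary \ref{kappainc}, by constructing for each of the two bounds an auxiliary pair $(a,b)$ with $x_1(\epsilon_1)\leq a\leq 1\leq b\leq x_2(\epsilon_2)$ satisfying the same barycentric identity $\epsilon_1 a+\epsilon_2 b=\epsilon_1+\epsilon_2$.

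\medskip

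\noindent\textbf{Step 1: first auxiliary pair.} Set $y_1=\hat{F}^{-1}(\epsilon_1)$ and define $z_2$ by the balance relation $\epsilon_1 y_1+\epsilon_2 z_2=\epsilon_1+\epsilon_2$, which gives $z_2=1+\tfrac{\epsilon_1}{\epsilon_2}(1-y_1)$. Since $\hat{F}^{-1}$ is non-decreasing, $x_1(\epsilon_1)=\tfrac{1}{\epsilon_1}\int_0^{\epsilon_1}\hat{F}^{-1}(u)\,du\leq \hat{F}^{-1}(\epsilon_1)=y_1\leq 1$, and $z_2\geq 1$ follows from $y_1\leq 1$. Combining $\epsilon_1 x_1(\epsilon_1)+\epsilon_2 x_2(\epsilon_2)=\epsilon_1+\epsilon_2=\epsilon_1 y_1+\epsilon_2 z_2$ with $x_1(\epsilon_1)\leq y_1$ yields $x_2(\epsilon_2)\geq z_2$. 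Hence $x_1(\epsilon_1)\leq y_1<1$ and $x_2(\epsilon_2)\geq z_2>1$.

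\medskip

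\noindent\textbf{Step 2: second auxiliary pair.} Symmetrically, set $z_2'=\hat{F}^{-1}(1-\epsilon_2)$ and define $y_1'=1-\tfrac{\epsilon_2}{\epsilon_1}(z_2'-1)$, so that $\epsilon_1 y_1'+\epsilon_2 z_2'=\epsilon_1+\epsilon_2$. Again by monotonicity, $x_2(\epsilon_2)\geq\hat{F}^{-1}(1-\epsilon_2)=z_2'\geq 1$, and comparing the two barycentric identities forces $x_1(\epsilon_1)\leq y_1'\leq 1$. So again $x_1(\epsilon_1)\leq y_1'<1$ and $x_2(\epsilon_2)\geq z_2'>1$.

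\medskip

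\noindent\textbf{Step 3: invoke Corollary \ref{kappainc}.} In case (i), $g(\cdot)$ is concave. Applying Corollary~\ref{kappainc} with $a'=x_1(\epsilon_1)\leq a=y_1$ and $b'=x_2(\epsilon_2)\geq b=z_2$ gives $\kappa_{x_1(\epsilon_1),x_2(\epsilon_2)}\geq\kappa_{y_1,z_2}$, which is the first term on the right-hand side of the claimed bound. Applying it with $a=y_1'$, $b=z_2'$ instead gives $\kappa_{x_1(\epsilon_1),x_2(\epsilon_2)}\geq\kappa_{y_1',z_2'}$, the second term. Taking the maximum of the two lower bounds yields assertion~(i). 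For case~(ii), $g(\cdot)$ is convex, and the analogous part of Corollary~\ref{kappainc} gives the reverse inequalities $\chi_{x_1(\epsilon_1),x_2(\epsilon_2)}\leq\chi_{y_1,z_2}$ and $\chi_{x_1(\epsilon_1),x_2(\epsilon_2)}\leq\chi_{y_1',z_2'}$; taking the minimum yields~(ii).

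\medskip

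\noindent\textbf{Remarks on difficulty.} There is essentially no obstacle: the proof is a bookkeeping exercise once one notices the key algebraic fact that for any pair of values satisfying the linear relation $\epsilon_1 a+\epsilon_2 b=\epsilon_1+\epsilon_2$ with $a\leq 1\leq b$, moving $a$ down forces $b$ up, which is exactly the monotonicity hypothesis of Corollary~\ref{kappainc}. The only thing to double-check is that $y_1$ and $z_2'$ are genuinely bracketed by $x_1(\epsilon_1)$ and $x_2(\epsilon_2)$, i.e., that the inequalities $x_1(\epsilon_1)\leq\hat F^{-1}(\epsilon_1)$ and $x_2(\epsilon_2)\geq\hat F^{-1}(1-\epsilon_2)$ hold; these are immediate from monotonicity of $\hat F^{-1}$ on the respective averaging intervals.
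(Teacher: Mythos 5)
Your proposal is correct and follows essentially the same route as the paper: both establish the bracketing $x_1(\epsilon_1) \leq \min\{\hat{F}^{-1}(\epsilon_1),\, 1 - \tfrac{\epsilon_2}{\epsilon_1}(\hat{F}^{-1}(1-\epsilon_2)-1)\}$ and $x_2(\epsilon_2) \geq \max\{\hat{F}^{-1}(1-\epsilon_2),\, 1 + \tfrac{\epsilon_1}{\epsilon_2}(1-\hat{F}^{-1}(\epsilon_1))\}$ from the balance identity and the monotonicity of $\hat{F}^{-1}$, and then conclude via the monotonicity of $\kappa$ and $\chi$ in Corollary~\ref{kappainc}. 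The only cosmetic quibble is that your strict inequalities $y_1 < 1$ and $z_2 > 1$ at the end of Step~1 are not guaranteed (only $y_1 \leq 1 \leq z_2$ is), but this boundary case is equally glossed over in the paper and does not affect the argument.
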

\begin{proof}
Observing that
\[
x_2(\epsilon_2) \geq \hat{F}^{- 1}(1 - \epsilon_2),
\]
we obtain
\[
\epsilon_1 x_1(\epsilon_1) \leq
\epsilon_1 + \epsilon_2 - \epsilon_2 \hat{F}^{- 1}(1 - \epsilon_2) =
\epsilon_1 + \epsilon_2 (1 - \hat{F}^{- 1}(1 - \epsilon_2)).
\]

In addition,
\[
x_1(\epsilon_1) \leq \hat{F}^{- 1}(\epsilon_1),
\]
yielding
\[
x_1(\epsilon_1) \leq
\min\{\hat{F}^{- 1}(\epsilon_1), 1 - \frac{\epsilon_2}{\epsilon_1}
(\hat{F}^{- 1}(1 - \epsilon_2) - 1)\}.
\]

Likewise,
\[
x_2(\epsilon_2) \geq
\max\{\hat{F}^{- 1}(1 - \epsilon_2), 1 + \frac{\epsilon_1}{\epsilon_2}
(1 - \hat{F}^{- 1}(\epsilon_1))\}.
\]

Combining the above two inequalities and using Corollary~\ref{kappainc} completes the proof.
\end{proof}

\begin{repproposition}{prop1}
Assume $g(\cdot)$ is concave and $\kappa_{a, b} >0$ for any $a < 1$ and $b > 1$, or $g(\cdot)$ is convex and $\chi_{a, b} < 0$ for any $a < 1$ and $b > 1$. If
\[
\lim_{\rho \uparrow 1} \frac{\expect{g(W)}}{g(\expect{W})} = 1,
\]
then
\[
\frac{W}{\expect{W}} \cond 1 \mbox{ as } \rho \uparrow 1.
\]
\end{repproposition}
\begin{proof}
Take $\delta > 0$ and $\epsilon_1 = F((1 - \delta) \expect{W})$.
Then either $\epsilon_1 = 0$, or $0 < \epsilon_1 \leq F(\expect{W})$
and $x_1(\epsilon) \leq \hat{F}^{- 1}(\epsilon_1) \leq 1 - \delta$.
In the latter case, define $\epsilon_2^* = 1-\hat{F}^{-1}(\expect{W})$,
and observe that
\[
\int_{u = \epsilon_1}^{1 - \epsilon_2^*} \hat{F}^{- 1}(u) du \leq
1 - \epsilon_1 - \epsilon_2^*,
\]
while
\[
\int_{u = \epsilon_1}^{1} \hat{F}^{- 1}(u) du \geq 1 - \epsilon_1.
\]
Hence, by continuity, there must exist
an $\epsilon_2 \in (0, \epsilon_2^*)$ with $x_2(\epsilon_2) > 1$ and
\[
\int_{u = \epsilon_1}^{1 - \epsilon_2} \hat{F}^{- 1}(u) du =
1 - \epsilon_1 - \epsilon_2,
\]
so that the assumptions of Lemmas~\ref{lem3} and~\ref{lem4} are
satisfied.
Applying these two lemmas then yields for concave $g(\cdot)$
\[
\kappa_{\hat{F}^{- 1}(\epsilon_1),
1 + \epsilon_1 (1 - \hat{F}^{- 1}(\epsilon_1))} \leq
\kappa_{\hat{F}^{- 1}(\epsilon_1),
1 + \frac{\epsilon_1}{\epsilon_2} (1 - \hat{F}^{- 1}(\epsilon_1))} \to 0
\mbox{ as } \rho \uparrow 1.
\]

This means that $\epsilon_1 = \pr{W \leq (1 - \delta) \expect{W}} \to 0$
as $\rho \uparrow 1$. A similar argument shows that
$\pr{W \geq (1 + \delta) \expect{W}} \to 0$ as $\rho \uparrow 1$. It now follows from the definition of convergence in probability that $\frac{W}{\expect{W}}$ converges to $1$ in probability. Hence we conclude that $\frac{W}{\expect{W}} \cond 1$ as $\rho \uparrow 1$ if $g(\cdot)$ is concave.

The proof for convex~$g(\cdot)$ follows by symmetry.
\end{proof}


%
%

\end{document}